% use option [preprint] to remove info line at bottom
% journal options: aop,aap,aos,aoas,ssy
% natbib option: authoryear
\documentclass[bj, preprint]{imsart}
\RequirePackage{natbib}

\usepackage[utf8]{inputenc}

\RequirePackage[colorlinks,citecolor=blue,urlcolor=blue]{hyperref}

\usepackage{amsmath}
\usepackage{amsfonts}
\usepackage{amssymb}
\usepackage{amsthm}

% Indicatrice, \mathds
\usepackage{dsfont}

% Algorithm environment
\usepackage[norelsize,ruled,vlined,commentsnumbered]{algorithm2e}

% Pour insérer des images
\usepackage{graphicx}
\usepackage{caption}
\usepackage{subcaption}

%\usepackage{color}

%\usepackage{amsthm,amsmath,natbib}
%\RequirePackage[colorlinks,citecolor=blue,urlcolor=blue]{hyperref}

% put your definitions there:
\startlocaldefs
\newtheorem{theorem}{Theorem}
\newtheorem{corollary}[theorem]{Corollary}
\newtheorem{prop}[theorem]{Proposition}
\newtheorem{lemma}[theorem]{Lemma}
\newtheorem*{remark}{Comment}
\newtheorem*{condition}{Condition}

\newcommand{\Acal}{\mathcal{A}}
\newcommand{\Bbf}{\mathbf{B}}
\newcommand{\Bcal}{\mathcal{B}}
\newcommand{\Cbf}{\mathbf{C}}
\newcommand{\Ccal}{\mathcal{C}}
\newcommand{\Cfrak}{\mathfrak{C}}

\newcommand{\Dfrak}{\mathfrak{D}}
\newcommand{\Ebb}{\mathbb{E}}
\newcommand{\fbf}{\mathbf{f}}
\newcommand{\Fcal}{\mathcal{F}}

\newcommand{\hbf}{\mathbf{h}}
\newcommand{\ibf}{\mathbf{i}}
\newcommand{\jbf}{\mathbf{j}}
\newcommand{\kbf}{\mathbf{k}}

\newcommand{\Lbf}{\mathbf{L}}

\newcommand{\Mbf}{\mathbf{M}}
\newcommand{\mbf}{\mathbf{m}}
\newcommand{\Mcal}{\mathcal{M}}
\newcommand{\Nbb}{\mathbb{N}}
\newcommand{\Nbf}{\mathbf{N}}
\newcommand{\Nfrak}{\mathfrak{N}}
\newcommand{\Obf}{\mathbf{O}}
\newcommand{\Pbb}{\mathbb{P}}
\newcommand{\Pbf}{\mathbf{P}}

\newcommand{\Pfrak}{\mathfrak{P}}
\newcommand{\Qbf}{\mathbf{Q}}
\newcommand{\Qcal}{\mathcal{Q}}
\newcommand{\Rbb}{\mathbb{R}}
\newcommand{\Rbf}{\mathbf{R}}
\newcommand{\Sbb}{\mathbb{S}}
\newcommand{\Sbf}{\mathbf{S}}
\newcommand{\Sfrak}{\mathfrak{S}}
\newcommand{\Tcal}{\mathcal{T}}
\newcommand{\Ubf}{\mathbf{U}}
\newcommand{\Ucal}{\mathcal{U}}

\newcommand{\Xcal}{\mathcal{X}}
\newcommand{\Ycal}{\mathcal{Y}}

\newcommand{\argmin}{\text{arg min}} 
\newcommand{\cinf}{C_{\Fcal,\infty}}
\newcommand{\cquad}{C_{\Fcal,2}}

\newcommand{\dperm}{d_\text{perm}}
\newcommand{\Id}{{\text{Id}}}
\newcommand{\Ker}{{\text{Ker}}}
\newcommand{\one}{\mathds{1}}
\newcommand{\pen}{\text{pen}}
\newcommand{\pentilde}{\widetilde{\pen}}
\newcommand{\Span}{\text{Span}}
\endlocaldefs

% Supprimer l'affiliation à Bernouilli
\setattribute{journal}{name}{}
% provide arXiv number if available:
%\arxiv{arXiv:1606.00622v4}

\begin{document}

\begin{frontmatter}

% "Title of the paper"
\title{Consistent order estimation for nonparametric Hidden Markov Models}
\runtitle{Nonparametric HMM order estimation}

% indicate corresponding author with \corref{}
% \author{\fnms{John} \snm{Smith}\corref{}\ead[label=e1]{smith@foo.com}\thanksref{t1}}
% \thankstext{t1}{Thanks to somebody}
% \address{line 1\\ line 2\\ printead{e1}}
% \affiliation{Some University}

%\ead[label=u1,url]{https://www.normalesup.org/~llehericy/index_en.html}
\author{\fnms{Luc} \snm{Leh\'ericy}\ead[label=e1]{luc.lehericy@math.u-psud.fr}}
\address{Laboratoire de Mathématiques d'Orsay, Univ. Paris-Sud, CNRS, Université Paris-Saclay, 91405 Orsay, France.
\printead{e1}}
%\affiliation{Laboratoire de Mathématiques d'Orsay, Univ. Paris-Sud, Université Paris-Saclay, 91405 Orsay, France.}

\runauthor{L. Leh\'ericy}

\begin{abstract}
We consider the problem of estimating the number of hidden states (the \emph{order}) of a nonparametric hidden Markov model (HMM). We propose two different methods and prove their almost sure consistency without any prior assumption, be it on the order or on the emission distributions. This is the first time a consistency result is proved in such a general setting without using restrictive assumptions such as \textit{a priori} upper bounds on the order or parametric restrictions on the emission distributions. Our main method relies on the minimization of a penalized least squares criterion. In addition to the consistency of the order estimation, we also prove that this method yields rate minimax adaptive estimators of the parameters of the HMM - up to a logarithmic factor.
Our second method relies on estimating the rank of a matrix obtained from the distribution of two consecutive observations.
Finally, numerical experiments are used to compare both methods and study their ability to select the right order in several situations.
\end{abstract}

\begin{keyword}[class=MSC]
\kwd[Primary ]{62F07}
\kwd[; secondary ]{62M05, 62M15, 62G05}
\end{keyword}

\begin{keyword}
\kwd{hidden Markov model}
\kwd{least squares method}
\kwd{model selection}
\kwd{nonparametric density estimation}
\kwd{order estimation}
\kwd{spectral method}
\end{keyword}

\end{frontmatter}

\section{Introduction}

\subsection{Context and motivation}

Hidden Markov models (HMM in short) are powerful tools to study time-evolving processes on heterogeneous populations. Nonparametric HMMs\---that is, hidden Markov models where the parameters are not restricted to a finite-dimensional space\---have proved useful in a wide range of applications, see for instance \cite{CC00} for voice activity detection, \cite{LWM03} for climate state identification, \cite{Lef03} for automatic speech recognition, \cite{SC09} for facial expression recognition, \cite{VBMMR14} for methylation comparison of proteins, \cite{YPRH11} for copy number variants identification in DNA analysis.

In practice, the hidden states often have an interpretation in the modelling of the phenomenon. It is thus important to be able to infer the right order in addition to the parameters when dealing with hidden Markov models. However, this task is notoriously difficult: \cite{gassiat2000likelihood} show that the likelihood ratio statistic is unbounded even in the simple case where one wants to test if a HMM has 1 or 2 hidden states. As far as we know, no consistency result has been proved about order selection for nonparametric HMMs. Even for parametric HMMs, no estimator has been proved to be consistent in a general setting without assuming that an \emph{a priori} upper bound on the order is known beforehand.

Not only is the order estimation useful in order to interpret the model, it is also necessary to ensure stability. This is because over estimating the order causes a loss of identifiability: there are several ways to add one state to a HMM without changing anything to its distribution. The spectral estimators (\cite{AHK12, dCGLLC15}) are especially sensitive to this problem, as shown by~\cite{Leh15} and Figure~\ref{fig_estimators_N19998}: as soon as the HMM becomes close to a HMM with fewer hidden states, the estimators give absurd results. Thus, estimating the right order is crucial for such methods to be effective.

Formally, a hidden Markov model is a markovian process $(X_t, Y_t)_{t \geq 1}$ taking value in $\Xcal \times \Ycal$. $(X_t)_{t \geq 1}$ is a Markov chain and the observations $Y_t$ depend only on the associated $X_t$ (i.e. the $(Y_t)_{t \geq 1}$ are independent conditionally on $(X_t)_{t \geq 1}$). The states $(X_t)_{t \geq 1}$ are assumed to be hidden, so that one has only access to the observations $(Y_t)_{t \geq 1}$. When the number of hidden states $|\Xcal|$ (which we call the \emph{order} of the HMM) is finite, the model is completely defined by its order, the initial distribution and the transition matrix of the hidden Markov chain, and the possible distributions of an observation $Y_t$ conditionally to the values of its hidden state $X_t$, which we call the \emph{emission distributions}.
The goal of the estimation procedures is to recover these parameters by using only the observations $(Y_t)_{t \geq 1}$.

%% Cette partie peut aller dans les related works.
Up to now, most theoretical results on hidden Markov models dealt with the parametric frame, that is with a finite number of parameters. However, it is not always possible to restrict the model to such a convenient finite-dimensional space. Theoretical results in the nonparametric framework were only developed recently and do not address the order estimation problem. \cite{dCGL15} propose an adaptive quasi-rate minimax least squares method. \cite{dCGLLC15} and \cite{robin2014estimating} study spectral methods. The latter is also proved to reach the minimax convergence rate but is not adaptive: it requires the regularity of the emission distributions to be known. All these methods require the order of the HMM to be known.

Our work is novel on three points. First, it deals with the nonparametric setting: we need no parametric or regularity assumption on the emission densities. Note that all our results also apply to parametric settings or even to finite observation spaces, since these are just special cases of nonparametric estimation. Secondly, we do not require any \emph{a priori} upper bound on the order, an assumption that is often made in earlier works, both frequentist and bayesian. Finally, our least squares method yields estimators of all model parameters at the same time, without requiring any prior information. Oracle inequalities show that these estimators are rate minimax adaptive up to a logarithmic factor.

\subsection{Related works}

The first step to obtain theoretical results was to understand when hidden Markov models are identifiable. This challenging issue was only solved a few years ago, see \cite{GCR15} (following \cite{allman2009identifiability} and \cite{HKZ12}) and with weaker assumptions \cite{AH14}. Both proved that under generic assumptions, the parameters of the HMM can be recovered from the distribution of a finite number of consecutive observations, thus paving the way for guarantees on parameter estimation.

HMM inference is generally decomposed in two parts. The first one is the estimation of the order, and the second one is the estimation of the parameters once the order is known.

%%%%% ORDER ESTIMATION - PENALIZED %%%%%

From a theoretical point of view, the order estimation problem remains widely open in the HMM framework. One can distinguish two kinds of results. The first kind does not need an \emph{a priori} upper bound on the order, but is only applicable to restrictive cases. For instance, using tools from coding theory, \cite{gassiat2003optimal} introduced a penalized maximum likelihood order estimator for which they prove strong consistency without \emph{a priori} upper bound on the order of the HMM. Nevertheless, their result is restricted to a finite observation space and they have to use heavy penalties that grow as a power of the order. For the special case of Gaussian or Poisson emission distributions, \cite{chambaz2009minimum} showed that the penalized maximum likelihood estimator is strongly consistent without any \emph{a priori} upper bound on the order. The second kind of results is more general but requires an \emph{a priori} upper bound of the order just to get weak consistency of order estimators, for penalized likelihood criterion (\cite{gassiat2002likelihood}) as well as Bayesian approaches (\cite{gassiat2014posterior, van2016overfitting}).

On a practical side, several order estimation methods using penalized likelihood criterion have been studied numerically, see for instance \cite{VBMMR14} when emission distributions are a mixture of parametric densities or \cite{celeux2008selecting} for parametric HMMs. The latter also introduced cross-validation procedures that aimed for circumventing the lack of independance of the observations. In the case of nonparametric HMMs, \cite{langrock2015nonparametric} studied a method using P-splines with a custom penalization.

%%%%% PARAMETER ESTIMATION - PENALIZED %%%%%

Then comes the question of estimating the parameters of the HMM once its order is known. In the parametric setting, the asymptotic behaviour of the maximum likelihood estimator is rather well understood (see for instance \cite{bickel1998asymptotic} or \cite{douc2004asymptotic} using techniques from \cite{legland-mcss00a}), but so far the question of its nonasymptotic behaviour remains open.
\cite{HKZ12} and \cite{AHK12} proposed a spectral method for parametric HMMs based on joint diagonalization of a set of matrices and controlled its nonasymptotic error. \cite{robin2014estimating} and \cite{dCGLLC15} extended this method to the nonparametric setting, and \cite{dCGL15} used the latter to obtain an estimator of the transition matrix of the hidden chain for a quasi-rate minimax adaptive least squares estimator of the emission densities.
Our least squares estimation method is a generalization of their procedure that is able to deal with all parameters at once and does not require auxiliary estimators.

\subsection{Contribution}

% The aim of our paper...

The aim of our paper is twofold. Firstly, we introduce two estimators of the order for nonparametric HMMs and show that both converge almost surely to the right order under minimal assumptions. Secondly, we numerically assess their ability to select the right order and compare their efficiency.\\

Our first and main method is the penalized least squares estimator.
This method is based on estimating the projection of the emission distributions onto a family of nested parametric subspaces. Our results hold for any Hilbert space, including parametric sets of emission densities and finite observation spaces. Then, for each subspace and for each possible value $K$ of the order, we look for the HMM with $K$ hidden states and with emission distributions in the chosen subspace that matches the observations ``best''\---where ``best'' means minimizing the empirical equivalent of an $\Lbf^2$ distance. This step provides an empirical distance between the observations and the model, which is then penalized in order to counterbalance the overfitting phenomenon that occurs when considering large models. Our first main result is that for a suitable choice of the penalty, choosing the model (i.e. the order and the subspace) which minimizes this penalized distance leads to a strongly consistent estimator of the order, see Corollary \ref{cor_consistency}.

In addition, this method also provides estimators of the other parameters of the HMM for free, by taking the parameters of the HMM corresponding to the selected model. We prove an oracle inequality on the $\Lbf^2$ risk of these estimators, which shows that they achieve the minimax adaptive rate of convergence, up to a logarithmic term, see Theorem \ref{th_oracle_param} and Corollary \ref{cor_oracle_even_when_mistaken}.

%% Mettre cette discussion dans le corps de l'article.
%The cornerstone of our theoretical guarantees, both the strong consistency and the oracle inequalities, is Lemma \ref{lemma_control_of_Z}. This technical result allows to carefully control the deviations of the empirical $\Lbf^2$ distance for all models at the same time. The structure of the proof follows the usual method to control empirical processes, see for instance \cite{Mas07}, Chapter 6, but the generalization to both nonparametric densities and an unknown number of states required a much finer control of the complexity of the models.

Our second estimator comes from spectral methods. Just like for our least squares procedure, we consider a nested family of parametric subspaces of a Hilbert space. Let us choose one of them, and denote by $(\varphi_a)_a$ an orthonormal basis of this subspace. Then, consider the matrix $\Nbf$ defined by
\begin{equation*}
\Nbf(a,b) := \Ebb[ \varphi_a(Y_1) \varphi_b(Y_2) ].
\end{equation*}
This matrix contains the coordinates of the density of $(Y_1, Y_2)$ in the orthonormal basis $(\varphi_a \otimes \varphi_b)_{a,b}$. It is proved in Section \ref{sec_spectral_theory} that the rank of $\Nbf$ is exactly equal to the order of the HMM as soon as the subspace is large enough. Therefore, finding its rank means finding the number of hidden states. However, in practice, one only has access to an empirical version of this matrix. The difficulty comes from the fact that this noisy version will almost surely have full rank. Thus, the key point is to recover the order of the true matrix given its empirical (full rank) counterpart. We achieve this by thresholding the spectrum of the empirical matrix.
Notice that other methods exist to estimate the rank of a matrix based on a noisy observation, see for instance \cite{kleibergen2006generalized} and references therein. Unfortunately, most can not be applied directly to our setting since they require an invertibility condition on the covariance matrix of the matrix entries. The CRT statistics from \cite{robin2000tests} is a notable exception, however their test of rank also requires the tuning of an unknown parameter in order to be weakly consistent.
\\

Then, we run an implementation of these two methods and compare their efficiency on simulated data. The difficulty at this stage comes from the fact that both method involve an unknown tuning parameter. This is a common issue that appears in every model selection method in one form or another, and many heuristics have been proposed to circumvent this difficulty.

For the least squares estimator, we compare two methods which have been both proved to be theoretically valid in simple cases and empirically validated in a large variety of situations: the slope heuristics (see for instance \cite{BMM12a} and references therein) and the dimension jump heuristics (introduced and proved to lead to an optimal penalization in the gaussian model selection framework by \cite{birge2007minimal}). Both behave well with our estimator and lead to a satisfying calibration of the penalty.

For the spectral estimator, we introduce a custom heuristics based on the fact that the smallest singular values of the empirical version of the matrix $\Nbf$ decrease in a simple manner. It is thus possible to calibrate an entirely data-driven threshold to distinguish ``significant'' singular values\---that is, the ones corresponding to non-zero singular values of the real $\Nbf$\---from noise.

The numerical validation shows that our least squares method performs well in almost any situation. It is able to select the right order accurately with notably fewer observations than the spectral estimator, and is easier to calibrate. On the other hand, the spectral method is very fast, which allows to take more observations into account. This allows to obtain satisfying estimators in a short amount of time.

Regarding the inference of the other parameters, our least squares estimator offers several advantages when compared to previous methods. First, it does not need a preliminary estimation of the transition matrix or of the order, unlike \cite{dCGL15} who used the transition matrix given by spectral estimators. Nevertheless, our method still reaches the adaptive minimax convergence rate for the estimation of the emission densities, up to a logarithmic factor. This is especially useful to avoid the cases where their auxiliary estimator fails. For instance, the spectral method that \cite{dCGL15} used is unreliable when the order is over estimated or where the states are almost linearly dependent, see for instance \cite{Leh15} or Figure \ref{fig_estimators_N19998}. Then, our least squares method is robust to an overestimation of the order, both theoretically and numerically, thanks to the iterative initialization procedure that we introduce. This initialization method consists in using estimators from smaller models as initial point for the minimization algorithm in order to avoid getting stuck in suboptimal local extrema. We believe it can be of practical interest since it produces robust estimators and can also be used in other settings, for instance as initialization for expectation maximization algorithm for maximum likelihood estimators.

\subsection{Outline of the paper}

Our paper is organized as follows.

Section \ref{sec_definitions} is devoted to the notations, the model and the assumptions.

Our main procedure, the penalized least squares method, is introduced in Section \ref{sec_least_squares}. We first state an identifiability proposition which we use to prove strong consistency of the estimator of the order. This is done in two steps. Firstly, we control the probability to underestimate the order. This is done thanks to Proposition \ref{prop_identifiability}, and gives an exponential bound on the probability of error, see Theorem \ref{th_underestimation}. Secondly, we control the probability to overestimate the order, see Theorem \ref{th_overestimation}. For this, we introduce a general condition on the penalty, which we use to prove polynomial decrease rate, and illustrate how to easily satisfy this condition. Finally, we state oracle inequalities on the estimators of the density of $L$ consecutive observations and on the parameters of the hidden Markov model under a generic assumption, see Theorem \ref{th_oracle_param} and Corollary \ref{cor_oracle_even_when_mistaken}, which shows that they reach the minimax convergence rate up to a logarithmic factor.

In Section \ref{sec_spectral_theory}, we introduce the spectral algorithm and propose a strongly consistent estimator of the order. This is done by thresholding the spectrum of the empirical version of the matrix $\Nbf$, which describes the projection of the distribution of two observations on an orthonormal basis, see Theorem \ref{th_spectr_consistent}.

In Section \ref{sec_numerical_experiments}, we propose practical algorithms to apply both methods and compare them.
Firstly, we set the parameters on which we will test both procedures.
Secondly, we compare their results and discuss their performance.
Lastly, we introduce and discuss the heuristics we used to practically implement both methods.

Our main technical result, Lemma \ref{lemma_control_of_Z}, can be found at the beginning of Section \ref{sec_proofs}. It is used extensively for both the consistency of the estimator of the order and the oracle inequalities on the HMM parameters. The rest of this section is dedicated to the proofs of the results.

Appendix \ref{app_algo} of our supplementary material contains the spectral algorithm from \cite{dCGLLC15} and \cite{dCGL15} that we use in our simulations. Appendix \ref{app_oracle_proof} gathers the proofs of Section \ref{sec_oracles}, which deals with the oracle inequalities for the least squares method. Finally, Appendix \ref{sec_proof_control_Z} contains the proof of Lemma \ref{lemma_control_of_Z}, and Appendix \ref{app_auxiliary} contains miscellaneous lemmas and proofs.

%\tableofcontents

\section{Definitions and assumptions}
\label{sec_definitions}

We will use the following notations throughout the paper.

\begin{itemize}
\item $\Nbb^* = \{1, 2, \dots \}$ is the set of positive integers.
\item For $k \in \Nbb^*$, $[k]$ is the set $\{1, \dots, k\}$.
\item If $f_1$ and $f_2$ are two functions, we denote by $f_1 \otimes f_2$ their tensor product, defined by $f_1 \otimes f_2 (x_1, x_2) = f_1(x_1) f_2(x_2)$.
\item $\Span(a)$ is the linear space spanned by the family $a$.
\item If $E_1$ and $E_2$ are two linear spaces, we denote by $E_1 \otimes E_2$ their tensor product, that is the linear space spanned by the tensor products of their elements: $E_1 \otimes E_2 = \Span(f_1 \otimes f_2 | f_1 \in E_1, f_2 \in E_2)$.
\item $\Delta_K = \{ \pi \in [0,1]^K | \sum_{k=1}^K \pi_k = 1 \}$ is the simplex in dimension $K$. It will be seen as the set of probability measures on a finite set of size $K$.
\item $\Qcal_K \subset \Rbb^{K \times K}$ is the set of irreducible transition matrices of size $K$.
\item $\Id_K$ is the identity matrix of size $K$.
\item $\Lbf^2(A, \nu)$ is the Hilbert space of square integrable functions on $A$ with respect to the measure $\nu$.
\item The notation $C \equiv C(a,b,\dots)$ for a constant $C$ will mean that the value of $C$ depends on the specified parameters $a$, $b$, $\dots$ For several constants depending on the same parameters, we will write $(C,D) \equiv (C,D)(a,b,\dots)$.
\end{itemize}

In the following, $L$ is a positive integer which will denote the number of consecutive observations used for the estimation procedure.

\subsection{Hidden Markov models}

Let $(X_j)_{j \geq 1}$ be a Markov chain with finite state space $\Xcal$ of size $K^*$ with transition matrix $\Qbf^*$ and initial distribution $\pi^*$. Without loss of generality, we can set $\Xcal = [K^*]$.

Let $(Y_j)_{j \geq 1}$ be random variables on a measured space $(\Ycal, \mu)$ with $\mu$ $\sigma$-finite such that conditionally on $(X_j)_{j \geq 1}$ the $Y_j$'s are independent with a distribution depending only on $X_j$. Let $\nu^*_k$ be the distribution of $Y_j$ conditionally to $\{X_j = k\}$. Assume that $\nu^*_k$ has density $f^*_k \in \Lbf^2(\Ycal, \mu)$ with respect to $\mu$. We call $(\nu^*_k)_{k \in \Xcal}$ the \emph{emission distributions} and $\fbf^* = (f^*_1, \dots, f^*_{K^*})$ the \emph{emission densities}.

Then $(X_j, Y_j)_{j \geq 1}$ is a hidden Markov model with parameters $(\pi^*, \Qbf^*, \fbf^*, K^*)$. The hidden chain $(X_j)_{j \geq 1}$ is assumed to be unknown, so that the estimator only has access to the observations $(Y_j)_{j \geq 1}$. 

For $K \in \Nbb^*$, $\pi \in \Rbb^K$, $\Qbf \in \Rbb^{K \times K}$ and $\fbf \in (\Lbf^2(\Ycal, \mu))^K$, let
\begin{equation*}
g^{\pi, \Qbf, \fbf, K} = \sum_{k_1, \dots, k_L = 1}^K \pi(k_1) \prod_{i=2}^{L} \Qbf(k_{i-1}, k_i) \bigotimes_{i=1}^L f_{k_i}.
\end{equation*}

When $\pi$ is a probability distribution on $[K]$, $\Qbf$ a $K \times K$ transition matrix and $\fbf$ a $K$-uple of probability densities, $g^{\pi, \Qbf, \fbf, K}$ is the density of the first $L$ observations of a HMM with parameters $(\pi, \Qbf, \fbf, K)$.

For the sake of readability, we will drop the dependence in $K$ in the following and write $g^{\pi, \Qbf, \fbf}$ instead of $g^{\pi, \Qbf, \fbf, K}$. Moreover, if $\Qbf$ is irreducible with stationary distribution $\pi$, we simply write $g^{\Qbf, \fbf}$, and we write the true density $g^* := g^{\pi^*, \Qbf^*, \fbf^*}$.

\subsection{Assumptions}
\label{sec_assumptions}

Let $\Fcal$ be a subset of $\Lbf^2(\Ycal, \mu)$ and $(\Pfrak_M)_{M \in \Mcal \subset \Nbb}$ be a sequence of nested subspaces of $\Lbf^2(\Ycal, \mu)$ such that $\Pfrak_M$ has dimension $M$ for all $M \in \Mcal$ and their union is dense in $\Lbf^2(\Ycal, \mu)$. $(\Pfrak_M)_{M \in \Mcal}$ will be the subspaces on which the projections of the emission densities will be estimated.

We will need the following assumptions.

\begin{description}
\item[\textbf{[HX]}] $(X_k)_{k \geq 1}$ is a stationary ergodic Markov chain with parameters $(\pi^*, \Qbf^*)$;

\item[\textbf{[HidA]}] $\Qbf^*$ is invertible, $L \geq 3$ and the family $\fbf^*$ is linearly independent;

\item[\textbf{[HidB]}] $\Qbf^*$ is invertible, $L \geq (2 K^* + 1)((K^*)^2 - 2K^* + 2) + 1$ and the emission densities $(f^*_k)_{k \in \Xcal}$ are all distinct;

\item[\textbf{[HF]}] $\fbf^* \in \Fcal^{K^*}$, $\Fcal$ is closed under projection on $\Pfrak_M$ for all $M$ and
\begin{equation*}
\forall f \in \Fcal, \quad
\begin{cases}
\| f \|_\infty \leq \cinf \\
\| f \|_2 \leq \cquad
\end{cases}
\end{equation*}
with $\cinf$ and $\cquad$ larger than 1.
\end{description}

The ergodicity assumption in \textbf{[HX]} is completely standard in order to obtain convergence results. In this case, the initial distribution is forgotten exponentially fast, so that the HMM will essentially behave like a stationary process. In order to simplify the proofs, we assume the Markov chain to be stationary. One can check that our results are essentially the same when the initial distribution is not the stationary one.

\textbf{[HidA]} appears in spectral methods, with the hypothesis that $\pi^* > 0$ elementwise, see for instance \cite{HKZ12}. \textbf{[HidA]} and \textbf{[HidB]} also appear in identifiability issues, possibly combined with the stationarity hypothesis, see \cite{AH14} and \cite{GCR15}. Note that the condition on $L$ in \textbf{[HidB]} only involves the real order $K^*$.

Even though \textbf{[HidB]} appears less restrictive than \textbf{[HidA]} about the emission densities, it is delicate to use here. The problem lies in the condition on the number of consecutive observations $L$. For \textbf{[HidB]}, one has to take $L$ larger than an increasing function of the order, so it requires to have an \emph{a priori} upper bound on the order to choose $L$. This is less interesting than \textbf{[HidA]}, which can work without prior bound since it only requires $L=3$ for any value of the order. %The second problem is a practical one: all our algorithms require to compute $L$-dimensional tensors, which means they have an exponential complexity in $L$. \textbf{[HidB]} requires to increase the order, which leads to very time-consuming algorithms, while the complexity with \textbf{[HidA]} is at most cubic in the order.

\section{Least squares estimation}
\label{sec_least_squares}

In this section, we introduce our penalized least squares estimator and study its asymptotic properties.

\subsection{Approximation spaces and estimators}

We want to estimate the density of $L$ consecutive observations $g^*$ by minimizing the quadratic loss $t \mapsto \|t - g^*\|_2^2 - \| g^* \|_2^2$. We thus take the corresponding empirical loss
\begin{equation*}
\gamma_n(t) = \| t \|_2^2 - \frac{2}{n} \sum_{s=1}^n t(Z_s)
\end{equation*}
where $Z_s = (Y_s, \dots, Y_{s+L-1})$ for an observation sequence $(Y_t)_{1 \leq t \leq n+L-1}$ of length ${n + L - 1}$ coming from a single HMM $(X_t,Y_t)_{t \geq 1}$.

Define for all $K \in \Nbb^*$, $M \in \Mcal$:
\begin{align*}
S_{K,M} &:= \{ g^{\Qbf,\fbf} , \Qbf \in \Qcal_K, \fbf \in (\Fcal \cap \Pfrak_M)^K \} \\
S_{K} &:= \{ g^{\Qbf,\fbf} , \Qbf \in \Qcal_K, \fbf \in \Fcal^K \}
\end{align*}
where $\Fcal$ and $(\Pfrak_M)_{M \in \Mcal}$ are defined in Section \ref{sec_assumptions}. In the following, we will always implicitly consider $M \in \Mcal$.

For all $K$ and $M$, we define the corresponding estimators
\begin{equation*}
\hat{g}_{K,M} = g^{\hat{\Qbf}_{K,M}, \hat{\fbf}_{K,M}}
	\in \underset{t \in S_{K,M}}{\argmin} \; \gamma_n(t)
\end{equation*} 
where we dropped the dependency in $n$ for ease of notation. Then, we select the parameters using the penalized empirical loss:
\begin{equation*}
(\hat{K}_\text{l.s.}, \hat{M}) \in \underset{K \leq n, \, M \leq n}{\argmin} \left\{ \gamma_{n}(\hat{g}_{K,M}) + \pen(n,M,K) \right\}
\end{equation*}
which leads to the estimators
\begin{align*}
\hat{g}  &:= \hat{g }_{\hat{K}_\text{l.s.}, \hat{M}} \\
\hat{\Qbf} &:= \hat{\Qbf}_{\hat{K}_\text{l.s.}, \hat{M}} \\
\hat{\fbf} &:= \hat{\fbf}_{\hat{K}_\text{l.s.}, \hat{M}}
\end{align*}

\subsection{Underestimation of the order}

Note that the distribution of the HMM remains unchanged under permutation of the hidden states. We will therefore use a pseudo-distance $\dperm$ that is invariant by permutation on the set of parameters.

We define it as follows. Let $K \geq 1$, $\pi_1, \pi_2 \in \Delta_K$, $\Qbf_1$ and $\Qbf_2$ transition matrices of size $K$, $\fbf_1, \fbf_2 \in (\Lbf^2(\Ycal,\mu))^K$. Let $\Sfrak(\Xcal)$ be the set of permutations of $\Xcal$. For all $\tau \in \Sfrak(\Xcal)$, define the swapped parameters $\tau \pi_1$, $\tau \Qbf_1$ and $\tau \fbf_1$ by
\begin{align*}
(\tau \pi_1)(k)  &:= \pi_1(\tau(k)) \\
(\tau \Qbf_1)(k,l) &:= \Qbf_1(\tau(k), \tau(l)) \\
(\tau \fbf_1)_k   &:= f_{1,\tau(k)}
\end{align*}
and finally
\begin{multline*}
\dperm((\pi_1, \Qbf_1,\fbf_1), (\pi_2, \Qbf_2,\fbf_2))
	:= \inf_{\tau \in \Sfrak(\Xcal)} \Bigg(
		\| \tau \pi_1 - \pi_2 \|_2^2 \\
		+ \| \tau \Qbf_1 - \Qbf_2 \|_F^2
		+ \sum_{k=1}^K \| (\tau \fbf_1)_k - f_{2,k} \|_2^2
	\Bigg)^{1/2}.
\end{multline*}

The following properties will be of use to prove the consistency of the order estimator, but we think it can also be of independent interest to better understand the identifiability of the model. The first one is a generalization of previous identifiability results from \cite{AH14, GCR15, dCGL15}.

\begin{prop}
\label{prop_identifiability}
Let $K \geq 1$, $\pi \in \Delta_K$ such that $\pi_k > 0$ for all $k \in \Xcal$, $\Qbf$ transition matrix of size $K$ and $\fbf \in (\Lbf^2(\Ycal, \mu))^K$ such that \textbf{[HidA]} or \textbf{[HidB]} hold for the order $K$. Then, for all $K' \geq 1$, for all $\pi' \in \Delta_{K'}$, for all transition matrix $\Qbf'$ of size $K'$ and all $\fbf' \in (\Lbf^2(\Ycal, \mu))^{K'}$, the following holds:
\begin{multline*}
\left( g^{\pi, \Qbf, \fbf} = g^{\pi', \Qbf', \fbf'} \text{ and } K' \leq K \right) \\
	\Rightarrow
	\left( K = K' \text{ and } \dperm((\pi, \Qbf, \fbf), (\pi', \Qbf', \fbf')) = 0 \right).
\end{multline*}
\end{prop}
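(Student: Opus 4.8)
The statement is an identifiability result: if two HMM densities of $L$ consecutive observations coincide and the second model has no more states than the first, then the orders must be equal and the parameters agree up to permutation. The plan is to reduce to the known identifiability theorems of \cite{AH14, GCR15}, which already give uniqueness up to label permutation \emph{once we know the two orders are equal and both satisfy the identifiability hypotheses}. The real work, therefore, is to rule out the possibility that $g^{\pi', \Qbf', \fbf'}$ with $K' < K$ can reproduce the same $L$-variate density as a genuinely $K$-state model satisfying \textbf{[HidA]} or \textbf{[HidB]}.

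\textbf{Key steps.} First I would handle the case $K' = K$ directly: both models then have the same order, and since the first model satisfies \textbf{[HidA]} or \textbf{[HidB]} with $\pi_k > 0$, I can invoke the identifiability theorem (following \cite{AH14, GCR15}) to conclude that $(\pi', \Qbf', \fbf')$ equals $(\pi, \Qbf, \fbf)$ up to a permutation $\tau \in \Sfrak(\Xcal)$, i.e. $\dperm = 0$. The substantive part is excluding $K' < K$. Here the idea is to argue that a $K'$-state representation of $g^{\pi,\Qbf,\fbf}$ would force the emission densities $\fbf$ to be expressible through fewer than $K$ linearly independent components, contradicting the independence (under \textbf{[HidA]}) or the distinctness-plus-$L$ (under \textbf{[HidB]}) hypothesis. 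Concretely, under \textbf{[HidA]} I would examine the bivariate or trivariate marginal: writing the density of $(Y_1, Y_2)$ as a bilinear form in the $f_k$'s with coefficient matrix governed by $\pi$ and $\Qbf$, the rank of this form equals $K$ precisely because $\Qbf$ is invertible, $\pi > 0$ elementwise, and $\fbf$ is linearly independent. A $K'$-state model produces a bilinear form of rank at most $K'$, so $g^{\pi',\Qbf',\fbf'} = g^{\pi,\Qbf,\fbf}$ with $K' < K$ is impossible. This rank argument is exactly the spectral characterization used later for the matrix $\Nbf$ in Section \ref{sec_spectral_theory}, and it is natural to borrow it here.

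\textbf{Main obstacle.} The delicate point is making the rank/linear-independence argument rigorous at the level of densities rather than finite matrices, and doing so uniformly across both sets of hypotheses. Under \textbf{[HidA]} the argument via the rank of the two-observation density is clean, since $L \geq 3$ suffices and the linear independence of $\fbf^*$ translates directly into a rank statement. Under \textbf{[HidB]}, however, the emission densities are only assumed distinct, not linearly independent, so the rank of the two-observation form may genuinely drop below $K$; this is precisely why \textbf{[HidB]} demands $L$ as large as $(2K^*+1)((K^*)^2 - 2K^* + 2)+1$. For that regime I would instead appeal to the full higher-order identifiability machinery of \cite{AH14}, which recovers the order from the joint law of $L$ consecutive observations even under mere distinctness, using the large value of $L$ to separate states that collapse in lower-order marginals. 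Reconciling these two cases into a single clean argument\---and in particular verifying that the $K' < K$ exclusion holds under \textbf{[HidB]} without circular use of an a priori order bound\---is where the careful bookkeeping lies.
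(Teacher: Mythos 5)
Your strategy matches the paper's proof, which is itself only a two-line citation: under \textbf{[HidA]} it invokes the spectral algorithm of \cite{dCGLLC15} applied on the span of both families of densities --- i.e.\ exactly your rank argument on the two-observation bilinear form to recover the order, then three observations to recover the parameters --- and under \textbf{[HidB]} it defers, as you do, to a rereading of \cite{AH14}. The only point you leave implicit that the paper flags explicitly is that these cited arguments must be checked to go through when $\fbf$ is not a family of probability densities and the chain is not stationary, but your rank argument does not use either assumption, so this is consistent.
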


\begin{remark}
This property does not require two assumptions that appear in \cite{AH14} and \cite{GCR15}: that $\fbf$ is a family of probability densities and that the Markov chain is stationary.

In particular, the fact that $\fbf$ may not be a family of probability densities is crucial in the proof of Corollary \ref{cor_distance_g_ast_S_KM}, which is necessary to prove the strong consistency of the estimator of the order.
\end{remark}

\begin{proof}

Assume \textbf{[HidA]}. The spectral algorithm from \cite{dCGLLC15} applied on the linear space spanned by both sets of densities allows to retrieve the order from two consecutive observations and the parameters from three consecutive observations. Their proof works when the emission densities are not probability densities and when the chain is not stationary.

Assume \textbf{[HidB]}. A careful reading of the proofs of \cite{AH14} shows that their result can be extended to general observation spaces and do not require the measures to be probabilities.
\end{proof}

The second property is the following corollary, which states that the $\Lbf^2$ distance between the actual model and the models where the order is underestimated is positive. It is worth noting that we do not need $\Fcal$ to be compact.
\begin{corollary}
\label{cor_distance_g_ast_S_KM}
Assume \textbf{[HX]}, (\textbf{[HidA]} or \textbf{[HidB]}) and \textbf{[HF]} hold. Then, for all $K < K^*$:
\begin{equation*}
d_{K}~:= \inf_{t \in S_{K}} \| t - g^* \|_2 > 0
\end{equation*}
\end{corollary}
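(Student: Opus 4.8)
The plan is to argue by contradiction. Suppose $d_K = 0$ for some $K < K^*$. Then there is a sequence $t_m = g^{\Qbf_m, \fbf_m} \in S_K$, with $\Qbf_m \in \Qcal_K$ irreducible of stationary distribution $\pi_m \in \Delta_K$ and $\fbf_m \in \Fcal^K$, such that $\| t_m - g^* \|_2 \to 0$. The strategy is to extract a limit of the parameters, show that this limit still represents $g^*$ as a HMM of order $K$, and then invoke Proposition \ref{prop_identifiability} with the true (order $K^*$) model playing the role of the identifiable model to force $K = K^*$, a contradiction.

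First I would handle the finite-dimensional parameters. Since $\Delta_K$ and the set of $K \times K$ transition matrices are compact, I can pass to a subsequence along which $\pi_m \to \pi_\infty \in \Delta_K$ and $\Qbf_m \to \Qbf_\infty$, where $\Qbf_\infty$ is a (possibly reducible) transition matrix; by continuity of $\pi \mapsto \pi^\top \Qbf$ the vector $\pi_\infty$ is a stationary distribution of $\Qbf_\infty$. The emission densities are the delicate part, precisely because $\Fcal$ is \emph{not} assumed compact: here I would only use that $\| f \|_2 \leq \cquad$ for every $f \in \Fcal$ by \textbf{[HF]}, so that each sequence $(f_{m,k})_m$ is bounded in the Hilbert space $\Lbf^2(\Ycal, \mu)$ and therefore admits a weakly convergent subsequence $f_{m,k} \rightharpoonup f_{\infty,k}$ with $f_{\infty,k} \in \Lbf^2(\Ycal, \mu)$.

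The main obstacle is passing to the limit in the nonlinear (in fact multilinear) map $\fbf \mapsto g^{\pi, \Qbf, \fbf}$, since weak convergence does not commute with products. I would resolve this by exploiting the tensor structure: for fixed test functions $\phi_1, \dots, \phi_L \in \Lbf^2(\Ycal, \mu)$,
\[
\langle \bigotimes_{i=1}^L f_{m,k_i}, \bigotimes_{i=1}^L \phi_i \rangle = \prod_{i=1}^L \langle f_{m,k_i}, \phi_i \rangle \longrightarrow \prod_{i=1}^L \langle f_{\infty,k_i}, \phi_i \rangle,
\]
and since $\| \bigotimes_i f_{m,k_i} \|_2 = \prod_i \| f_{m,k_i} \|_2 \leq \cquad^L$ stays bounded while finite linear combinations of the $\bigotimes_i \phi_i$ are dense in $\Lbf^2(\Ycal^L, \mu^{\otimes L})$, this yields $\bigotimes_i f_{m,k_i} \rightharpoonup \bigotimes_i f_{\infty,k_i}$ weakly in $\Lbf^2(\Ycal^L, \mu^{\otimes L})$. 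As $t_m$ is a finite ($K^L$-term) linear combination of such tensor products with coefficients $\pi_m(k_1) \prod_{i=2}^L \Qbf_m(k_{i-1},k_i)$ converging to $\pi_\infty(k_1) \prod_{i=2}^L \Qbf_\infty(k_{i-1},k_i)$, I obtain $t_m \rightharpoonup g^{\pi_\infty, \Qbf_\infty, \fbf_\infty}$ weakly in $\Lbf^2$. On the other hand $t_m \to g^*$ strongly, hence weakly, so by uniqueness of weak limits $g^* = g^{\pi_\infty, \Qbf_\infty, \fbf_\infty}$.

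Finally I would conclude by identifiability. Note that $\fbf_\infty$ need not be a family of probability densities, since a weak $\Lbf^2$ limit may lose mass when $\mu$ is infinite; this is exactly why Proposition \ref{prop_identifiability} was stated without that requirement, and it is what makes the argument go through. I then apply Proposition \ref{prop_identifiability} with the true model $g^* = g^{\pi^*, \Qbf^*, \fbf^*}$, which under \textbf{[HX]} has $\pi^* > 0$ elementwise and satisfies \textbf{[HidA]} or \textbf{[HidB]} for the order $K^*$, as the identifiable model, and with $g^{\pi_\infty, \Qbf_\infty, \fbf_\infty}$, of order $K \leq K^*$, as the second model. The equality $g^* = g^{\pi_\infty, \Qbf_\infty, \fbf_\infty}$ forces $K = K^*$, contradicting $K < K^*$. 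Hence $d_K > 0$.
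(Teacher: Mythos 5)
Your proof is correct, and it reaches the conclusion by a genuinely different compactness argument than the paper's. The paper also argues by contradiction and ends by invoking Proposition \ref{prop_identifiability} applied to a model whose emission functions need not be probability densities, but it obtains the limiting parameters by first projecting everything onto a finite-dimensional space $\Pfrak_M^{\otimes L}$: it chooses $M$ large enough (by density of $\bigcup_M \Pfrak_M$) that the projected true emission densities $\fbf^*_M$ still satisfy \textbf{[HidA]} or \textbf{[HidB]}, uses that $\text{Proj}_{\Pfrak_M^{\otimes L}}(g^{\pi,\Qbf,\fbf}) = g^{\pi,\Qbf,\text{Proj}_{\Pfrak_M}(\fbf)}$, and then exploits ordinary (strong) compactness of the bounded set $(\Fcal \cap \Pfrak_M)^K$ in finite dimensions, finally applying the identifiability proposition to the equality $g^{\pi_\infty,\Qbf_\infty,\fbf_\infty} = g^{\pi^*,\Qbf^*,\fbf^*_M}$. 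You instead work directly in $\Lbf^2(\Ycal,\mu)$, replace strong compactness by weak compactness of the ball of radius $\cquad$, and supply the missing ingredient that weak convergence of each factor together with the uniform bound $\|\bigotimes_i f_{m,k_i}\|_2 \leq \cquad^L$ and density of the span of elementary tensors yields weak convergence of the tensor products; this lets you apply Proposition \ref{prop_identifiability} to the true parameters themselves rather than to their projections. What each approach buys: yours dispenses with Lemma \ref{lemma_projection} and with the (mildly delicate) verification that the identifiability hypothesis survives projection for $M$ large, at the price of the tensor-product weak-convergence lemma; the paper's stays entirely within elementary finite-dimensional compactness. Both correctly exploit the fact, emphasized in the Comment after Proposition \ref{prop_identifiability}, that the limiting emission functions need not be probability densities.
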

\begin{proof}
Proof in Section \ref{sec_proof_cor_distance}.
\end{proof}

Our first theorem shows that the probability to underestimate the order decreases exponentially with the number of observations. This comes from Corollary \ref{cor_distance_g_ast_S_KM}: since the empirical criterion converges to the $\Lbf^2$ distance (plus some constant that does not depend on the model), the penalized error will eventually become larger for orders under $K^*$ than for orders over $K^*$, which means that we won't underestimate the real order. The exponential decrease rate brings to mind the one studied in \cite{gassiat2003optimal}: in both cases, the exponents involve the distance between the actual model and models with underestimated orders, as can be seen in our proof.

\begin{theorem}
\label{th_underestimation}
Assume \textbf{[HX]}, (\textbf{[HidA]} or \textbf{[HidB]}) and \textbf{[HF]} hold. There exists positive constants $\rho \equiv \rho(\cquad, \cinf, \Qbf^*, L)$ and $\beta \equiv \beta(\cquad, \cinf, \Qbf^*, (d_K)_{K < K^*}, L)$ such that the following holds.

Assume that
\begin{equation*}
\displaystyle \forall n, \; \forall M, \; \forall K, \quad \pen(n,M,K) \geq \rho (MK + K^2 - 1) \frac{\log(n)}{n},
\end{equation*}
and
\begin{equation*}
\displaystyle \forall M, \; \forall K, \quad \pen(n,M,K) \underset{n \rightarrow \infty}{\longrightarrow} 0
\end{equation*}
then there exists $n_0$ such that for all $n \geq n_0$,
\begin{equation*}
	\Pbb(\hat{K}_\text{l.s.} < K^*) \leq e^{-\beta n}.
\end{equation*}
\end{theorem}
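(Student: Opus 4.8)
The plan is to turn the order-underestimation event into a uniform deviation event for a centered empirical process and then control it with the main concentration lemma. Writing $\nu_n(t) := \frac{1}{n}\sum_{s=1}^n\big(t(Z_s) - \Ebb[t(Z_1)]\big)$ and using that $Z_1$ has density $g^*$, so that $\Ebb[t(Z_1)] = \langle t, g^*\rangle$, one has the basic identity
\[
\gamma_n(t) = \|t - g^*\|_2^2 - \|g^*\|_2^2 - 2\nu_n(t).
\]
By Corollary \ref{cor_distance_g_ast_S_KM} the quantity $d_{\min} := \min_{1\le K<K^*} d_K$ is positive, and since $S_{K,M}\subseteq S_K$ every candidate with $K<K^*$ satisfies $\|\hat{g}_{K,M}-g^*\|_2\ge d_K\ge d_{\min}$. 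The first preparatory step is to fix a reference model: because $\bigcup_M\Pfrak_M$ is dense and $\Fcal$ is closed under projection, I would pick $M_0\in\Mcal$ so that the vector $\fbf^*_{M_0}$ obtained by projecting each coordinate of $\fbf^*$ onto $\Pfrak_{M_0}$ lies in $(\Fcal\cap\Pfrak_{M_0})^{K^*}$ and, by continuity of $\fbf\mapsto g^{\Qbf^*,\fbf}$, satisfies $\|g^{\Qbf^*,\fbf^*_{M_0}} - g^*\|_2\le d_{\min}/2$. Since $g^{\Qbf^*,\fbf^*_{M_0}}\in S_{K^*,M_0}$, we get $\gamma_n(\hat{g}_{K^*,M_0})\le\gamma_n(g^{\Qbf^*,\fbf^*_{M_0}})$.

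The comparison step is as follows. On $\{\hat{K}_{\text{l.s.}}<K^*\}$, and for $n$ large enough that $K^*,M_0\le n$, the selected pair $(K,M)=(\hat{K}_{\text{l.s.}},\hat{M})$ beats the reference model in the penalized criterion. Substituting the identity above, using $\gamma_n(\hat{g}_{K^*,M_0})\le\gamma_n(g^{\Qbf^*,\fbf^*_{M_0}})$ and the lower bound $\|\hat{g}_{K,M}-g^*\|_2\ge d_{\min}$, all the $-\|g^*\|_2^2$ terms cancel and I obtain
\[
\tfrac34 d_{\min}^2 + \pen(n,M,K) \le 2\sup_{t\in S_{K,M}}|\nu_n(t)| + 2|\nu_n(g^{\Qbf^*,\fbf^*_{M_0}})| + \pen(n,M_0,K^*).
\]
Because $\pen(n,M_0,K^*)\to 0$, for $n$ large it is at most $\tfrac18 d_{\min}^2$; restricting further to the event where the fixed-function fluctuation obeys $|\nu_n(g^{\Qbf^*,\fbf^*_{M_0}})|\le\tfrac1{16}d_{\min}^2$, this rearranges to the statement that for some $K<K^*$ and some $M\le n$,
\[
\sup_{t\in S_{K,M}}|\nu_n(t)| \ge \tfrac14 d_{\min}^2 + \tfrac12\pen(n,M,K) \ge \tfrac14 d_{\min}^2 + \tfrac{\rho}{2}(MK+K^2-1)\tfrac{\log n}{n}.
\]

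It then remains to bound the probabilities of these events. For the deviation of the fixed function $g^{\Qbf^*,\fbf^*_{M_0}}$, a single exponential (Bernstein-type) inequality for the ergodic chain gives probability at most $e^{-c'n}$. For the supremum, I would invoke Lemma \ref{lemma_control_of_Z}, which controls $\sup_{t\in S_{K,M}}|\nu_n(t)|$ in terms of the model dimension $MK+K^2-1$ and $n$: choosing the constant $\rho\equiv\rho(\cquad,\cinf,\Qbf^*,L)$ large enough makes the penalty term $\tfrac{\rho}{2}(MK+K^2-1)\tfrac{\log n}{n}$ absorb both the expected size of the supremum and the dimension-dependent part of the lemma's deviation bound, leaving only a residual deviation of constant order $\tfrac14 d_{\min}^2$. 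Each such event then has probability at most $e^{-cn}$ for a constant $c\equiv c(\cquad,\cinf,\Qbf^*,d_{\min},L)$, and a union bound over the finitely many $K<K^*$ and the at most $n$ values $M\le n$ gives $\Pbb(\hat{K}_{\text{l.s.}}<K^*)\le K^*n\,e^{-cn}+e^{-c'n}\le e^{-\beta n}$ for $n\ge n_0$, with $\beta\equiv\beta(\cquad,\cinf,\Qbf^*,(d_K)_{K<K^*},L)$ as claimed.

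The crux of the argument is the uniform control of $\sup_{t\in S_{K,M}}|\nu_n(t)|$ over the non-linear, data-indexed parametric family $S_{K,M}$, which is exactly the content of Lemma \ref{lemma_control_of_Z}; the delicate point in applying it is that the dimension-dependent part of the bound must be dominated by the $(MK+K^2-1)\tfrac{\log n}{n}$ penalty \emph{uniformly} in $M$ up to $n$, so that the polynomial union bound over models is swamped by the exponential $e^{-cn}$ coming from the constant-order gap $d_{\min}^2$. The remaining steps are the comparison bookkeeping above.
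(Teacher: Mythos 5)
Your overall architecture matches the paper's: fix a reference model $(K^*,M_0)$ with $\|g^*_{K^*,M_0}-g^*\|_2$ small compared to $d_{\min}$, compare penalized criteria on the underestimation event, and reduce to a deviation bound for the empirical process. The bookkeeping up to your displayed inequality $\sup_{t\in S_{K,M}}|\nu_n(t)|\geq\tfrac14 d_{\min}^2+\tfrac\rho2(MK+K^2-1)\tfrac{\log n}{n}$ is correct. The gap is in the final concentration step, and it has two parts.

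First, Lemma \ref{lemma_control_of_Z} does not control $\sup_{t\in S_{K,M}}|\nu_n(t)|$; it controls the \emph{self-normalized} process $\sup_t|\nu(t-s_{K,M})|/(\|t-s_{K,M}\|_2^2+x_{K,M}^2)$. Unwinding it for the unnormalized supremum only yields $\sup_t|\nu(t-g^*)|\leq\tfrac14\sup_t\|t-g^*\|_2^2+\tfrac14 x_{K,M}^2\lesssim\cquad^{2L}$, a constant-order bound that does not beat the threshold $\tfrac14 d_{\min}^2$ when the separation $d_{\min}$ is small. Second, even if you substitute the underlying Talagrand-type inequality (Lemma \ref{lemme_inegalite_concentration} plus the entropy bounds), the expectation of the unnormalized supremum is of order $a\sqrt{(MK+K^2-1)\log n/n}$ with $a\equiv a(\cquad,\cinf,L)$ constant, and the only way to dominate a square root by $\tfrac\rho2(MK+K^2-1)\tfrac{\log n}{n}$ plus a residual at most $\tfrac14 d_{\min}^2$ is via $a\sqrt u\leq\tfrac\rho2 u+\tfrac{a^2}{2\rho}$, which forces $\rho\gtrsim a^2/d_{\min}^2$. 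You would therefore prove the statement only with $\rho$ depending on $(d_K)_{K<K^*}$, strictly weaker than the claimed dependency $\rho\equiv\rho(\cquad,\cinf,\Qbf^*,L)$ (which is what lets the same $\rho$ serve Theorem \ref{th_overestimation} and Corollary \ref{cor_consistency}). The paper avoids both problems by never unnormalizing: it bounds $|\nu(\hat g_{K,\hat M_K}-g^*)|\leq Z_{K,\hat M_K}(g^*)\bigl(\|\hat g_{K,\hat M_K}-g^*\|_2^2+x_{K,\hat M_K}^2\bigr)\leq\tfrac14\|\hat g_{K,\hat M_K}-g^*\|_2^2+\tfrac12\pentilde+\tfrac12 A\tfrac xn$ and absorbs the first term into the $+\|\hat g_{K,\hat M_K}-g^*\|_2^2$ already present in the lower bound for the criterion difference, leaving $\tfrac12\|\hat g_{K,\hat M_K}-g^*\|_2^2\geq\tfrac12 d_{\min}^2$; the separation then enters only through the choice $x=\beta n$, i.e., through $\beta$, not through $\rho$. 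To repair your proof, keep the lower bound $\|\hat g_{K,M}-g^*\|_2^2\geq d_{\min}^2$ in reserve until after the normalized deviation bound has been applied, rather than substituting it at the outset.
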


\begin{proof}
Proof in Section \ref{sec_proof_th_underestimation}.
\end{proof}

\subsection{Overestimation of the order and consistency}

Our second theorem controls the probability to overestimate the order. It consists in overpenalizing large models so that the estimated order remains small.

We will need the following technical condition on the penalty:
\begin{condition}[{[\textbf{Hpen}]($\alpha, \rho$)}]
The penalty function $\pen$ satisfies
\begin{multline*}
\exists n_1, \; \forall n \geq n_1, \; \forall M \leq n, \; \forall K \leq n \text{ s.t. } K > K^*, \\
\pen(n,M,K) - \pen(n,M,K^*)
	\geq \rho (MK + K^2 - 1) \frac{\log(n)}{n} + \alpha \frac{\log(n)}{n},
\end{multline*}
\end{condition}

We can now state the theorem and its corollary proving the strong consistency of our estimator of the order. Note that it does not require any identifiability assumption.

\begin{theorem}
\label{th_overestimation}
Assume \textbf{[HX]} and \textbf{[HF]} hold. There exists positive constants $(\rho, \beta) \equiv (\rho, \beta)(\cquad, \cinf, \Qbf^*, L)$ such that the following holds.

Assume \textbf{[Hpen]}($\alpha, \rho$) holds for some $\alpha \geq 0$, then there exists $n_0$ such that for all $n \geq n_0$,
\begin{equation*}
\Pbb(\hat{K}_\text{l.s.} > K^*) \leq n^{-\beta \alpha}.
\end{equation*}
\end{theorem}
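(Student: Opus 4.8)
The plan is to bound the probability of overestimation by a union bound over all pairs $(K,M)$ with $K > K^*$, comparing each such model against the reference model at order $K^*$. The key observation is that if $\hat{K}_\text{l.s.} > K^*$, then there must exist some model $(K,M)$ with $K > K^*$ whose penalized criterion beats that of some reference model at $K^*$. Concretely, for any fixed $M^*$ (e.g. the smallest $M$ containing the projections we care about), the event $\{\hat{K}_\text{l.s.} > K^*\}$ implies that for some $(K,M)$ with $K > K^*$,
\begin{equation*}
\gamma_n(\hat{g}_{K,M}) + \pen(n,M,K) \leq \gamma_n(\hat{g}_{K^*,M^*}) + \pen(n,M^*,K^*).
\end{equation*}
First I would rewrite $\gamma_n(t) = \|t - g^*\|_2^2 - \|g^*\|_2^2 - \frac{2}{n}\sum_s (t - g^*)(Z_s) + 2\Ebb[t-g^*]$ so that the contrast difference $\gamma_n(\hat{g}_{K,M}) - \gamma_n(\hat{g}_{K^*,M^*})$ decomposes into a deterministic approximation part and a centered empirical process part. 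Since $K > K^* \geq$ the true order, the model $S_{K,M}$ contains models arbitrarily close to $g^*$ (one can embed $g^*$ by splitting states), so the approximation bias of the larger model is controlled, and the dominant fluctuation is the empirical process indexed by the difference $\hat{g}_{K,M} - g^*$.

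The heart of the argument is to control this empirical fluctuation uniformly over $S_{K,M}$. This is exactly where I expect to invoke the paper's main technical result, Lemma~\ref{lemma_control_of_Z}, which provides a concentration bound for the centered empirical process $\frac{1}{n}\sum_s (t - g^*)(Z_s) - \Ebb[t - g^*]$ over the class $S_{K,M}$, with a deviation term scaling like the model dimension $MK + K^2 - 1$ times $\log(n)/n$. The mixing/ergodicity from \textbf{[HX]} and the boundedness from \textbf{[HF]} (via $\cinf$, $\cquad$) are what make such a concentration inequality available despite the dependence in the observations. Using this lemma, the empirical process contribution over model $(K,M)$ is bounded, with high probability, by a constant $\rho$ times $(MK + K^2 - 1)\frac{\log(n)}{n}$ plus a term that can be made summable. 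The role of the penalty gap in \textbf{[Hpen]}($\alpha,\rho$) is then transparent: the extra $\rho (MK+K^2-1)\frac{\log(n)}{n}$ is designed precisely to dominate the fluctuation of the larger model, while the surplus $\alpha\frac{\log(n)}{n}$ is what converts into the polynomial rate $n^{-\beta\alpha}$.

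To finish, I would show that on the complement of the bad concentration events, no model with $K > K^*$ can beat the reference, so $\Pbb(\hat{K}_\text{l.s.} > K^*)$ is at most the sum of the individual deviation probabilities. Each such probability, from Lemma~\ref{lemma_control_of_Z}, is of order $\exp(-c\,\alpha \log n) = n^{-c\alpha}$ for an appropriate constant; summing over the at most $n^2$ pairs $(K,M)$ (each bounded by $n$) still leaves a bound of the form $n^{-\beta\alpha}$ after absorbing the polynomial factor into the exponent and relabeling $\beta$, valid for $n$ large enough. The main obstacle, and the place requiring genuine care rather than bookkeeping, is establishing the uniform concentration in Lemma~\ref{lemma_control_of_Z}: the supremum is taken over a nonconvex, non-compact parametric family $S_{K,M}$ under dependent sampling, so one must combine a chaining or bracketing argument over the model with a mixing-type deviation inequality, and track the dimension dependence $MK + K^2 - 1$ sharply enough that it matches the penalty. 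The subsequent union bound and the conversion to the $n^{-\beta\alpha}$ rate are then comparatively routine, provided the constant $\rho$ is chosen large enough relative to the constants appearing in the concentration bound.
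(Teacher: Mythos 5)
Your high-level plan (compare overfitted models to a reference at order $K^*$, control the empirical fluctuation via Lemma~\ref{lemma_control_of_Z}, let the penalty surplus $\alpha\log(n)/n$ produce the rate) is the right skeleton, but the choice of reference model breaks the argument. You compare $(K,M)$ with $K>K^*$ against a \emph{fixed} $(K^*,M^*)$. Since the emission densities need not lie in any finite-dimensional $\Pfrak_{M^*}$, the reference carries a constant, non-vanishing approximation bias $\|g^*_{K^*,M^*}-g^*\|_2^2>0$, while the larger model's bias vanishes (as you yourself note). The deterministic part of $\gamma_n(\hat g_{K,M})-\gamma_n(\hat g_{K^*,M^*})$ is therefore of order $-\|g^*_{K^*,M^*}-g^*\|_2^2$, a negative constant that the penalty gap (which tends to $0$) cannot dominate; in fact the event ``some larger model beats the fixed reference'' has probability tending to one, so the inclusion you start from, while true, bounds $\Pbb(\hat K_\text{l.s.}>K^*)$ by something that does not go to zero. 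Note also that \textbf{[Hpen]} only controls $\pen(n,M,K)-\pen(n,M,K^*)$ at the \emph{same} $M$, so a cross-$M$ comparison is not even covered by the hypothesis.

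The paper's proof avoids this by comparing $(K,\hat M_K)$ against $(K^*,\hat M_K)$, i.e.\ the reference dimension is tied to the selected one. Then $g^*_{K^*,\hat M_K}=g^*_{K,\hat M_K}$ is the orthogonal projection of $g^*$ onto $\Pfrak_{\hat M_K}^{\otimes L}$ (independent of $K$), and the Pythagorean theorem gives $\|\hat g_{K,\hat M_K}-g^*\|_2^2-\|g^*_{K^*,\hat M_K}-g^*\|_2^2=\|\hat g_{K,\hat M_K}-g^*_{K,\hat M_K}\|_2^2\ge 0$: the biases cancel exactly rather than being ``controlled.'' The empirical process is then centered at the projections $g^*_{K,M}$ (this is precisely why Lemma~\ref{lemma_control_of_Z} allows an arbitrary centering sequence $s$, not just $s=g^*$), and its self-normalized form absorbs the quadratic term, leaving $D_{n,K}\ge \pen(n,\hat M_K,K)-\pen(n,\hat M_K,K^*)-\pentilde(n,\hat M_K,K)-Ax/n$, at which point \textbf{[Hpen]} applies directly. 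Two secondary points: the union bound over $n^2$ pairs is unnecessary because the lemma is already uniform over all $(K',M')\le n$ on a single event of probability $1-e^{-x}$; and your plan to ``absorb the polynomial factor into the exponent'' fails for small $\alpha$, since $n^{2-c\alpha}\le n^{-\beta\alpha}$ cannot hold uniformly over $\alpha\ge 0$.
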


\begin{proof}
Proof in Section \ref{sec_proof_th_underestimation}.
\end{proof}

\begin{corollary}
\label{cor_consistency}
Assume \textbf{[HX]}, \textbf{[HF]} and (\textbf{[HidA]} or \textbf{[HidB]}) hold. There exists positive constants $(\rho, \beta) \equiv (\rho, \beta)(\cquad, \cinf, \Qbf^*, L)$ such that the following holds.

Assume that the penalty function satisfies
\begin{equation*}
\begin{cases}
\forall n, \; \forall M \leq n, \; \forall K \leq n, \quad \pen(n,M,K) \geq \rho (MK + K^2 - 1) \frac{\log(n)}{n} \\
\forall M, \; \forall K, \quad \pen(n,M,K) \underset{n \rightarrow + \infty}{\longrightarrow} 0
\end{cases}
\end{equation*}
and \textbf{[Hpen]}$(\alpha/\beta, \rho)$ holds for some $\alpha > 1$, then
\begin{equation*}
\Pbb(\hat{K}_\text{l.s.} \neq K^*) = O(n^{-\alpha}).
\end{equation*}
In particular, $\hat{K}_\text{l.s.} \longrightarrow K^*$ almost surely.
\end{corollary}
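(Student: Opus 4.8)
The plan is to decompose the error event and feed it into the two preceding theorems, then invoke the Borel--Cantelli lemma for the almost sure statement. First I would write
\[
\{\hat{K}_\text{l.s.} \neq K^*\} = \{\hat{K}_\text{l.s.} < K^*\} \cup \{\hat{K}_\text{l.s.} > K^*\}
\]
and control each piece separately. Since \textbf{[HX]}, \textbf{[HF]} and (\textbf{[HidA]} or \textbf{[HidB]}) are all assumed here, Corollary \ref{cor_distance_g_ast_S_KM} holds, so Theorem \ref{th_underestimation} is applicable, and Theorem \ref{th_overestimation} is applicable as well. The preliminary task is to reconcile the constants output by the two theorems. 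Denote by $(\rho_1, \beta_1)$ the constants of Theorem \ref{th_underestimation} and by $(\rho_2, \beta_2)$ those of Theorem \ref{th_overestimation}, all depending only on $(\cquad, \cinf, \Qbf^*, L)$. I would then set $\rho := \max(\rho_1, \rho_2)$ and $\beta := \beta_2$. With this choice, the corollary's lower bound $\pen(n,M,K) \geq \rho (MK + K^2 - 1) \frac{\log(n)}{n}$ forces both theorems' lower-bound hypotheses simultaneously, and the corollary's \textbf{[Hpen]}$(\alpha/\beta, \rho)$ implies \textbf{[Hpen]}$(\alpha/\beta, \rho_2)$, because enlarging $\rho$ on the right-hand side only strengthens the inequality defining that condition.

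With these choices fixed, Theorem \ref{th_underestimation}, whose hypotheses $\pen \geq \rho_1 (MK + K^2 - 1) \frac{\log(n)}{n}$ and $\pen \to 0$ are both implied by the corollary's assumptions, provides some $n_0^{(1)}$ and $\beta_1 > 0$ with $\Pbb(\hat{K}_\text{l.s.} < K^*) \leq e^{-\beta_1 n}$ for $n \geq n_0^{(1)}$; as an exponentially small term this is $O(n^{-\alpha})$, indeed $o(n^{-\gamma})$ for every $\gamma > 0$. Theorem \ref{th_overestimation}, applied with its own ``$\alpha$'' parameter set equal to $\alpha/\beta$, yields some $n_0^{(2)}$ with
\[
\Pbb(\hat{K}_\text{l.s.} > K^*) \leq n^{-\beta_2 \cdot (\alpha/\beta)} = n^{-\alpha}
\]
for $n \geq n_0^{(2)}$, where the last equality uses $\beta = \beta_2$. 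Summing the two bounds for $n \geq \max(n_0^{(1)}, n_0^{(2)})$ gives $\Pbb(\hat{K}_\text{l.s.} \neq K^*) \leq e^{-\beta_1 n} + n^{-\alpha} = O(n^{-\alpha})$, which is the first claim.

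For the almost sure convergence, since $\alpha > 1$ the series $\sum_n \Pbb(\hat{K}_\text{l.s.} \neq K^*)$ converges by comparison with $\sum_n n^{-\alpha}$. All estimators $\hat{K}_\text{l.s.}$ are built from the first $n + L - 1$ observations of a single HMM, hence are measurable functions on a common probability space, so Borel--Cantelli applies to the sequence indexed by $n$: the event $\{\hat{K}_\text{l.s.} \neq K^*\}$ occurs only finitely often almost surely. Therefore $\hat{K}_\text{l.s.} = K^*$ for all $n$ large enough almost surely, that is $\hat{K}_\text{l.s.} \to K^*$ almost surely.

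This proof has no analytic obstacle of its own, since all the work is already contained in Theorems \ref{th_underestimation} and \ref{th_overestimation}. The only points requiring care are bookkeeping points: ensuring that a single pair $(\rho, \beta)$ satisfies the hypotheses of both theorems at once, and verifying that substituting the \textbf{[Hpen]} parameter $\alpha/\beta$ into the exponent $-\beta_2 \cdot (\alpha/\beta)$ of Theorem \ref{th_overestimation} produces exactly $n^{-\alpha}$ so that the summability threshold $\alpha > 1$ matches the hypothesis. These are precisely the reasons the corollary is stated with $\beta$ appearing inside the \textbf{[Hpen]} parameter.
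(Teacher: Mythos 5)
Your proposal is correct and follows exactly the route the paper intends for this corollary: combine Theorem \ref{th_underestimation} (exponential bound on underestimation, absorbed into $O(n^{-\alpha})$) with Theorem \ref{th_overestimation} applied at parameter $\alpha/\beta$ so that the exponent $\beta_2\cdot(\alpha/\beta)$ collapses to $\alpha$, then conclude by Borel--Cantelli using $\alpha>1$. The bookkeeping with $\rho=\max(\rho_1,\rho_2)$ and $\beta=\beta_2$, including the observation that \textbf{[Hpen]}$(\alpha/\beta,\rho)$ implies \textbf{[Hpen]}$(\alpha/\beta,\rho_2)$, is handled correctly.
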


Let us comment on the condition \textbf{[Hpen]} when using a penalty of the form $\pen(n,M,K) = C (MK + K^2 - 1) \log(n)/n$ where $C$ may depend on $n$.
\begin{itemize}
\item If one has an \emph{a priori} bound on the order, i.e. if $K^* \leq K_0$ for some known $K_0$, then direct computations show that for all $\alpha$, $\rho$, there exists $C \geq 0$ depending on $K_0$ (for instance, $C = 2\rho(1+K_0^2 \vee \frac{\alpha}{\rho})$ works) such that \textbf{[Hpen]}$(\alpha, \rho)$ holds for all $K^* \leq K_0$ (instead of $K \leq n$). This means that if one has an \emph{a priori} bound $K_0$ on the order, then by taking a constant $C$ large enough and $\hat{K}_\text{l.s.} \leq K_0$, the estimator $\hat{K}_\text{l.s.} > K^*$ is almost surely consistent.

\item If one does not have an \emph{a priori} bound on $K^*$, taking a constant $C$ does not allow to get \textbf{[Hpen]}$(\alpha, \rho)$ for all possible $K^*$, which means we can't apply Corollary \ref{cor_consistency}. However, by taking $C$ as a sequence indexed by $n$ that tends to infinity, we get that for all $K^*$ and $\alpha, \rho$, \textbf{[Hpen]}$(\alpha, \rho)$ holds. This implies consistency with polynomial decrease of the probability of error, at the cost of overpenalizing.

Overpenalizing is actually necessary if one wants to satisfy \textbf{[Hpen]} for all $K^*$. This is stated in the following proposition:
\end{itemize}

\begin{prop}
\label{prop_overpenalizing}~
Let $\rho > 0$ and $\pen$ be a positive penalty such that for all $K^*$, \textbf{[Hpen]}$(0,\rho)$ holds, then there exists a sequence $(u_n)_{n \geq 1} \longrightarrow \infty$ such that for all $n \geq 1$, $M \leq n$ and $K \leq n$, $\pen(n,M,K) \geq u_n (MK + K^2 - 1) \log(n)/n$.
\end{prop}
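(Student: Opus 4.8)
The plan is to exploit the fact that \textbf{[Hpen]}$(0,\rho)$ is assumed to hold for \emph{every} value of $K^*$ and to turn the resulting family of one-step inequalities into a single accumulated lower bound by telescoping. For each $j \in \Nbb^*$, applying \textbf{[Hpen]}$(0,\rho)$ with threshold order $K^*=j$ and comparison order $j+1$ produces a rank $n_1(j)$ such that, for all $n \geq n_1(j)$ and all $M \leq n$,
\[
\pen(n,M,j+1) - \pen(n,M,j) \geq \rho\left(M(j+1) + (j+1)^2 - 1\right)\frac{\log n}{n}.
\]
Up to replacing $n_1(j)$ by $\max_{1 \leq i \leq j} n_1(i)$ I may assume $n_1$ is nondecreasing, and I then set $\kappa(n) := \max\{K \geq 1 : n_1(K-1) \leq n\}$, which tends to $\infty$ because every $n_1(j)$ is finite. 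The point of $\kappa(n)$ is that all the one-step inequalities needed to reach any order $K \leq \kappa(n)$ are simultaneously valid at rank $n$.

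First I would establish the accumulated bound: for every $n$, every $M \leq n$ and every $2 \leq K \leq \kappa(n)$, summing the displayed inequality over $j = 1, \dots, K-1$ and discarding the nonnegative term $\pen(n,M,1)$ gives
\[
\pen(n,M,K) \geq \rho\,\frac{\log n}{n}\sum_{j=2}^{K}\left(Mj + j^2 - 1\right).
\]
The next step is an elementary comparison of this sum with the target quantity $MK + K^2 - 1$: since the summand is increasing in $j$, a direct computation yields
\[
\sum_{j=2}^{K}\left(Mj + j^2 - 1\right) \geq \frac{K}{3}\left(MK + K^2 - 1\right),
\]
so that on the range $2 \leq K \leq \kappa(n)$ the penalty dominates $\tfrac{\rho K}{3}(MK + K^2 - 1)\tfrac{\log n}{n}$, a bound whose multiplicative constant grows with $K$. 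Defining
\[
u_n := \inf_{M \leq n,\; K \leq n} \frac{\pen(n,M,K)\,n}{(MK + K^2 - 1)\log n},
\]
the inequality of the statement holds for this $u_n$ by construction, so the whole content of the proposition reduces to proving $u_n \to \infty$.

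The hard part will be the uniformity in $(M,K)$, and this is where I expect the main obstacle. The accumulated bound forces a growing factor only for orders $K$ that can be \emph{fully} telescoped down to $1$ at rank $n$, i.e.\ for $K$ in the mid-range $[2,\kappa(n)]$; for orders $K$ too large to be reached at rank $n$ only the single-step bound coming from $K^*=1$ is available, which contributes merely the constant factor $\rho$, and the same is true at the very bottom of the range. The crux is therefore to propagate overpenalization to the whole admissible range $K \leq n$: I would do this by choosing the telescoping starting point adaptively rather than always descending to $1$, and by coupling the growth of $\kappa(n)$ with $n$ through a diagonal argument, so that the infimum defining $u_n$ is driven to infinity despite the unfavorable extreme orders. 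Controlling how the $K^*$-dependent thresholds $n_1(K^*)$ enter this diagonal choice, and showing that the resulting lower bound is not destroyed by the orders outside $[2,\kappa(n)]$, is the delicate step on which the whole argument hinges.
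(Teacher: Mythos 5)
Your core mechanism is the same as the paper's: apply \textbf{[Hpen]}$(0,\rho)$ at a ladder of successive orders and telescope, so that the one-step gains accumulate into a multiplicative factor that grows with the number of steps. The paper telescopes along the geometric ladder $K_0 r^k$ with $r=m/(m-1)$ and $K_0=(m-1)^m$, so that $m$ applications of the hypothesis reach the single order $K=m^m$ with accumulated factor $\tfrac{r}{r-1}=m$; your unit-step ladder reaches every $K\leq\kappa(n)$ with factor of order $K/3$. Up to this point your computation is correct (including the elementary comparison $\sum_{j=2}^K(Mj+j^2-1)\geq\tfrac{K}{3}(MK+K^2-1)$). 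But the proof then stops at exactly the step that has to be closed: you define $u_n$ as an infimum over all $M\leq n$, $K\leq n$ and never show it diverges, only sketching an ``adaptive starting point / diagonal argument'' for the extreme orders. As written, this is a genuine gap, not a routine verification.

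You should know, however, that the obstruction you ran into at $K=2$ is real and no refinement of the telescoping will remove it for an arbitrary positive penalty: the hypotheses only force $\pen(n,M,2)\geq\pen(n,M,1)+\rho(2M+3)\log(n)/n$, and a penalty meeting this with near-equality (e.g.\ $\pen(n,M,K)=\varepsilon+\rho\sum_{j=2}^K(Mj+j^2-1)\log(n)/n$, which satisfies \textbf{[Hpen]}$(0,\rho)$ for every $K^*$) keeps the ratio $\pen(n,M,2)\,n/((2M+3)\log n)$ pinned near $\rho$, so the infimum over all $K\leq n$ cannot tend to infinity. The proposition has to be read in the setting introduced immediately before it, namely penalties of the shape $\pen(n,M,K)=C_n(MK+K^2-1)\log(n)/n$; note that the paper's own proof likewise only establishes the divergent lower bound at the single order $K=m^m$ before passing to the uniform conclusion, which is legitimate precisely because for that shape the ratio is the single number $C_n$, independent of $(M,K)$. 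With that reading your argument is already complete: your accumulated bound at $K=\kappa(n)$ gives $C_n\geq\rho\kappa(n)/3\to\infty$ (just as the paper's gives $C_n\geq v_m\sim\rho m/2$ for $n$ large), and the uniformity in $(M,K)$ is inherited from the shape of the penalty rather than extracted from the telescoping. So the fix is not a cleverer diagonal argument but dropping the attempt to prove the $K$-uniform divergence directly from \textbf{[Hpen]} alone.
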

\begin{proof}
Proof in Appendix \ref{sec_proof_prop_overpenalizing}.
\end{proof}

%A direct consequence of these results is that the estimator obtained from the penalty $\pen(n,M,K) = (MK + K^2 - 1) \log(n) \log(\log(n))/n$ is strongly consistent. %This penalty is very similar to the BIC criterion in the maximum likelihood estimation setting, up to the $\log(\log(n))$ factor. We believe it is possible to adapt the tools developed in this paper to the maximum likelihood setting, which would therefore help to understand whether the BIC penalty leads to consistent estimators of the order. This is the object of further work.

\subsection{Oracle inequalities}
\label{sec_oracles}

Our first result for this section is an oracle inequality on the density of $L$ consecutive observations for the least squares estimator.

\begin{theorem}
\label{th_oracle_g}
Assume \textbf{[HX]} and \textbf{[HF]} hold. Then there exists positive constants
$(n_0, \rho, A) \equiv (n_0, \rho, A)(\cquad, \cinf, \Qbf^*, L)$ such that if the penalty satisfies
\begin{equation*}
\displaystyle \forall n, \; \forall M \leq n, \; \forall K \leq n, \quad \pen(n,M,K) \geq \rho (MK + K^2 - 1) \frac{\log(n)}{n}
\end{equation*}
then for all $n \geq n_0$, for all $x > 0$, it holds with probability larger than ${1 - e^{-x}}$ that
\begin{align*}
\| \hat{g} - g^* \|_2^2 
	&\leq 4 \inf_{K \leq n, \, M \leq n} \left\{ \| g^*_{K,M} - g^* \|_2^2 + \pen(n,M,K) \right\}
		+ 4 A \frac{x}{n}.
\end{align*}
\end{theorem}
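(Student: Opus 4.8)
The plan is to run the standard penalized least squares oracle argument, the one genuine difficulty being the uniform control of the empirical process attached to the dependent observations $(Z_s)$. Write $\gamma(t) := \|t - g^*\|_2^2 - \|g^*\|_2^2$ for the deterministic loss and introduce the centered empirical process
\begin{equation*}
\nu_n(t) := \frac{1}{n}\sum_{s=1}^n \big(t(Z_s) - \Ebb[t(Z_1)]\big).
\end{equation*}
Since stationarity in \textbf{[HX]} gives $\Ebb[t(Z_1)] = \langle t, g^* \rangle$, one has the exact identity $\gamma_n(t) = \gamma(t) - 2\nu_n(t)$ for every $t$. First I would fix an arbitrary pair $(K,M)$ with $K,M \leq n$ and let $g^*_{K,M} \in S_{K,M}$ be a near-minimizer of $\|t - g^*\|_2$ over $t \in S_{K,M}$. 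The selection rule gives $\gamma_n(\hat g) + \pen(n,\hat M, \hat K_{\text{l.s.}}) \leq \gamma_n(\hat g_{K,M}) + \pen(n,M,K) \leq \gamma_n(g^*_{K,M}) + \pen(n,M,K)$, the second inequality because $\hat g_{K,M}$ minimizes $\gamma_n$ over $S_{K,M}$; substituting the identity above yields the fundamental inequality
\begin{multline*}
\|\hat g - g^*\|_2^2 \leq \|g^*_{K,M} - g^*\|_2^2 + 2\nu_n(\hat g - g^*) - 2\nu_n(g^*_{K,M} - g^*) \\
+ \pen(n,M,K) - \pen(n,\hat M, \hat K_{\text{l.s.}}).
\end{multline*}

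Everything now reduces to controlling the two empirical process increments, both of which are of the form $\nu_n(t - g^*)$ with $t$ in some model $S_{K,M}$. The core of the proof is therefore a two-sided, self-normalized bound on $\nu_n$, uniform over all models simultaneously, which is exactly the content I would isolate as the main technical lemma (Lemma \ref{lemma_control_of_Z}): for $\rho$ large enough and a suitable constant $A$, on a single event of probability at least $1 - e^{-x}$,
\begin{equation*}
2\,|\nu_n(t - g^*)| \leq \tfrac12 \|t - g^*\|_2^2 + \rho\,(MK + K^2 - 1)\frac{\log n}{n} + A\frac{x}{n}
\end{equation*}
simultaneously for every $K,M \leq n$ and every $t \in S_{K,M}$. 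Establishing this is where I expect the main obstacle, and the argument has three ingredients. First, the $(Z_s)$ are dependent, so i.i.d. concentration is unavailable; I would exploit the geometric ergodicity granted by \textbf{[HX]} (irreducibility and invertibility of $\Qbf^*$) through a blocking/coupling scheme or a Bernstein inequality adapted to the underlying mixing chain, using the uniform bounds $\|f\|_\infty \leq \cinf$ and $\|f\|_2 \leq \cquad$ of \textbf{[HF]} to bound the sup-norm and variance of each increment $t(Z_s)$. Secondly, each $S_{K,M}$ is a nonlinear set of HMM densities, not a linear subspace, so the supremum of $\nu_n$ over it must be handled by chaining against an $\epsilon$-net of $S_{K,M}$; the effective metric dimension is of order $MK + K^2 - 1$ (the $MK$ from the $K$ emission densities carrying $M$ coordinates each in $\Pfrak_M$, and the $K^2 - 1$ from the parameters of the transition matrix), which is what produces the term $\rho(MK + K^2 - 1)\log n / n$. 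Thirdly, uniformity over the at most $n^2$ pairs $(K,M)$ is obtained by a weighted union bound, and it is precisely the $\log n$ factor in the penalty that absorbs the $\log(\#\text{models})$ cost; this is why the penalty lower bound $\pen \geq \rho(MK + K^2 - 1)\log n / n$ with $\rho$ large is required.

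Finally I would combine the pieces on the event of the lemma. Applying the uniform bound to $t = \hat g$ and to $t = g^*_{K,M}$ and inserting both into the fundamental inequality, the two quadratic terms contribute $\tfrac12\|\hat g - g^*\|_2^2$, which is absorbed into the left-hand side, and $\tfrac12\|g^*_{K,M} - g^*\|_2^2$, which merges with the bias. The fluctuation term $\rho(\hat M \hat K_{\text{l.s.}} + \hat K_{\text{l.s.}}^2 - 1)\log n / n$ carried by the random model is dominated by $\pen(n,\hat M, \hat K_{\text{l.s.}})$ and therefore cancels the negative penalty, while the analogous term for the fixed model is dominated by $\pen(n,M,K)$. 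After rearranging to clear the coefficient $\tfrac12$ on the left and then taking the infimum over $(K,M)$, careful bookkeeping of the numerical constants gives
\begin{equation*}
\|\hat g - g^*\|_2^2 \leq 4 \inf_{K \leq n,\, M \leq n}\big\{ \|g^*_{K,M} - g^*\|_2^2 + \pen(n,M,K) \big\} + 4A\frac{x}{n},
\end{equation*}
which is the claimed oracle inequality.
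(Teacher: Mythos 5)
Your proposal is correct and follows essentially the same route as the paper: the same fundamental inequality obtained from the selection rule and the identity $\gamma_n(t) = \|t-g^*\|_2^2 - \|g^*\|_2^2 - 2\nu(t)$, the same reduction to a uniform self-normalized control of $\nu$ over all models (this is exactly Lemma \ref{lemma_control_of_Z} applied with $s_{K,M} = g^*$, proved in the paper via a Markov-chain Bernstein inequality, bracketing-entropy bounds of dimension $MK + K^2 - 1$, and a peeling argument), and the same final bookkeeping producing the constant $4$.
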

\begin{proof}
Proof in Section \ref{sec_proof_th_oracle}.
\end{proof}

\begin{remark}
The constant $4$ before the infimum can be replaced by any constant $\kappa > 1$, at the cost of changing the constants $n_0$, $\rho$ and $A$.
\end{remark}

We would like to deduce an oracle inequality on the parameters of the HMM from this result.
Using Cauchy-Schwarz inequality, it is easy to upper bound the error on the density $g^*$ by the error on the parameters: for all probability distributions $\pi_1$ and $\pi_2$ on $[K]$, for all transition matrices $\Qbf_1$ and $\Qbf_2$ of size $K$ and for all $\fbf_1, \fbf_2 \in \Fcal^K$,
\begin{equation}
\label{eq_upperbound}
\| g^{\pi_1, \Qbf_1,\fbf_1} - g^{\pi_2, \Qbf_2,\fbf_2} \|_2
	\leq
	\cquad^L \sqrt{L K} \dperm ((\pi_1, \Qbf_1,\fbf_1), (\pi_2, \Qbf_2,\fbf_2))
\end{equation}
as soon as \textbf{[HF]} holds. The proof of this equation is detailed in Section \ref{sec_proof_eq_upperbound}.

Thus, all we need to deduce an oracle inequality on the parameters is to lower bound the error on $g^*$ by the error on the parameters.
Let $\Cfrak \subset \Rbb^{K^*} \times \Rbb^{K^* \times K^*} \times \Rbb^{K^* \times K^*}$ be the set of parameters $(p,q,A)$ such that
\begin{equation}
\label{eq_conditions_det_invertible}
\begin{cases}
%\sum_{i \in \Xcal} p_i = 0\\
\forall i \in \Xcal, \quad \sum_{j \in \Xcal} q(i,j) = 0 \\
\forall j \in \Xcal, \quad \sum_{i \in \Xcal} A(i,j) = 0
\end{cases}
\end{equation}
Note that $\Cfrak$ can be identified with the set
\begin{align*}
\Cfrak_\text{red} :=& \{ ((p_i)_{i \geq 2},(q(i,j))_{i, j \geq 2},(A(i,j))_{i \geq 2, j}) \; | \; (p,q,A) \in \Cfrak \} \\
	=& \, \Rbb^{K^*-1} \times \Rbb^{K^* \times (K^*-1)} \times \Rbb^{K^* \times (K^*-1)}
\end{align*}
These assumptions are natural since they are necessary (but not sufficient) to ensure that if $(p,q,A) \in \Cfrak$ and $\pi$ is a probability distribution, $\Qbf$ a transition matrix and $\fbf$ a vector of probability densities, then $\pi+p$ is also a probability distribution, $\Qbf + q$ a transition matrix and $\fbf + A \fbf$ a vector of probability densities.

The first step in order to get a lower bound along the same lines as equation (\ref{eq_upperbound}) is to control the behaviour of the difference near the true parameters, which comes down to proving that the quadratic form $M$ derived from the second-order expansion of
\begin{align*}
\Nfrak : (p,q,A) \in \Rbb^{K^*} \times \Rbb^{K^* \times K^*} \times \Rbb^{K^* \times K^*}
	\longmapsto \| g^{\pi + p, \Qbf + q, \fbf + A \fbf} - g^{\pi, \Qbf, \fbf} \|_2^2
\end{align*}
is positive definite on $\Cfrak$ for $(\pi, \Qbf, \fbf) = (\pi^*, \Qbf^*, \fbf^*)$. One can write the coefficients of the matrix of this quadratic form as polynomials in the coefficients of $\pi$, $\Qbf$ and of the Gram matrix $G(\fbf) := (\langle f_i, f_j \rangle)_{i,j \in \Xcal}$. However, this matrix may not be invertible: one has to consider its restriction to the space $\Cfrak$, which is equivalent to considering the quadratic form $M_\Cfrak$ defined on $\Cfrak_\text{red}$ by the second-order expansion of $x \in \Cfrak_\text{red} \longmapsto \Nfrak(I_\Cfrak(x))$ where $I_\Cfrak$ is the natural linear injection from $\Cfrak_\text{red}$ to $\Cfrak$ (note that $I_\Cfrak$ is bijective and bicontinous under \textbf{[HF]}). Since the quadratic form $M_\Cfrak$ is always nonnegative, we only need its determinant to be non zero in order for the quadratic form $M$ to be positive definite on $\Cfrak$.

Thus, let $H$ be determinant of the matrix of this quadratic form. $H$ is also a polynomial in the coefficients of $\pi$, $\Qbf$ and $G(\fbf)$. The following lemma shows that there exists some parameters $\pi, \Qbf$ and $\fbf$ satisfying the conditions for which $H$ is not zero.

\begin{lemma}
\label{lemma_H_nonzeropolynomial}
There exists some parameters $(\pi, \Qbf, \fbf)$ satisfying the conditions \textbf{[HX]} and \textbf{[HidA]} such that $H(\pi, \Qbf, G(\fbf)) \neq 0$.
\end{lemma}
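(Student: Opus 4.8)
The plan is to exhibit a single explicit example of parameters $(\pi, \Qbf, \fbf)$ for which the determinant $H$ is nonzero, and then leverage the fact that $H$ is a polynomial to conclude. The key structural observation is that $H(\pi, \Qbf, G(\fbf))$ is a polynomial in the finitely many coordinates $\pi_k$, $\Qbf(i,j)$ and $\langle f_i, f_j\rangle$. A polynomial is either identically zero or nonzero on a dense set; so it suffices to check that $H$ is not the zero polynomial. To do this I would first verify that $H$ does not vanish identically by a symbolic or limiting computation, and then argue that one can find genuine parameters meeting the constraints \textbf{[HX]} and \textbf{[HidA]} at which $H \neq 0$.

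Concretely, I would first pick a convenient ``reference'' configuration in which the quadratic form $M_\Cfrak$ decouples as much as possible. A natural choice is to take $\Qbf^*$ close to (or equal to) a generic invertible stochastic matrix with all entries positive\,---\,for instance a small perturbation of the matrix whose rows are the uniform distribution, which has a positive stationary distribution $\pi$ and is irreducible, so \textbf{[HX]} holds\,---\,and to take $\fbf$ to be an orthonormal family, so that the Gram matrix $G(\fbf) = \Id_{K^*}$. With an orthonormal emission family and $L \geq 3$, the hypotheses \textbf{[HidA]} are satisfied ($\Qbf$ invertible, $\fbf$ linearly independent). The point of choosing $G(\fbf) = \Id$ is that it dramatically simplifies the expression of the density $g^{\pi+p,\Qbf+q,\fbf+A\fbf}$ and hence of the quadratic form, since the $\Lbf^2$ inner products appearing in the second-order expansion reduce to Kronecker deltas and the tensor products $\bigotimes_{i=1}^L f_{k_i}$ become orthonormal.

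With this reduction the second-order expansion of $\Nfrak$ becomes a sum of squares of the linear perturbations of the coordinates of $g$ in the orthonormal tensor basis $(f_{k_1}\otimes\cdots\otimes f_{k_L})$. The quadratic form $M$ is then the Gram matrix of the differentials of these coordinate functions with respect to $(p,q,A)$, restricted to the subspace $\Cfrak$. Its determinant $H$ on $\Cfrak_\text{red}$ is nonzero precisely when these differentials are linearly independent on $\Cfrak_\text{red}$, i.e.\ when the first-order map $(p,q,A)\mapsto \frac{d}{dt}\big|_{t=0} g^{\pi+tp,\Qbf+tq,\fbf+tA\fbf}$ is injective on $\Cfrak$. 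This injectivity is exactly the infinitesimal (linearized) version of the identifiability statement in Proposition \ref{prop_identifiability}: a nonzero tangent perturbation that leaves $g$ unchanged to first order would, together with the local identifiability the proposition provides, contradict the fact that the parameters are locally determined by $g$ up to permutation (and the perturbations in $\Cfrak$ are transverse to the permutation orbit at a generic point). So the main work is to show that this linearized map has trivial kernel on $\Cfrak_\text{red}$ for the chosen configuration.

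The step I expect to be the main obstacle is precisely establishing this injectivity of the differential\,---\,equivalently, that $H$ is not the zero polynomial. Proposition \ref{prop_identifiability} gives local identifiability of the full parameter triple up to permutation, but one must still rule out infinitesimal directions tangent to the permutation orbit creeping into $\Cfrak$, and more delicately one must convert a statement about the map $g^{\cdot}$ being locally injective modulo permutations into the stronger statement that its differential is injective on the complement $\Cfrak$. The cleanest route is an explicit computation: write out the differential of $g$ in the orthonormal basis for the specific small example (say $K^* = 2$ or $K^* = 3$ with $L = 3$), assemble the resulting matrix, and verify directly that its restriction to $\Cfrak_\text{red}$ has full rank, for instance by evaluating $H$ at one concrete numerical instance of $(\pi,\Qbf)$. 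Since a single nonzero evaluation certifies that the polynomial $H$ is not identically zero, and since the set of admissible parameters satisfying \textbf{[HX]} and \textbf{[HidA]} contains a nonempty open set on which the polynomial $H$ cannot vanish identically (being nonzero somewhere), this yields the existence of admissible $(\pi,\Qbf,\fbf)$ with $H(\pi,\Qbf,G(\fbf)) \neq 0$, completing the proof.
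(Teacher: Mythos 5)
Your overall strategy matches the paper's: pick a maximally decoupled reference configuration with an orthogonal emission family, show the quadratic form is nondegenerate on $\Cfrak$ there, and then use continuity of the polynomial $H$ to move to parameters satisfying \textbf{[HX]} and \textbf{[HidA]}. But the proposal stops exactly where the lemma's content begins. The entire substance of the statement is the verification that $H$ is nonzero at some point, and you defer this to ``a symbolic or limiting computation'' or ``evaluating $H$ at one concrete numerical instance'' for $K^*=2$ or $3$. A numerical check at small $K^*$ would not prove the lemma for the arbitrary $K^*$ of the model, and no general computation is actually performed. The paper carries this out in full: it takes $\pi$ uniform, $\Qbf=\Id_{K^*}$ and $\langle f_i,f_j\rangle = F\one_{i=j}$, expands $g^{\pi+p,\Qbf+q,\fbf+A\fbf}-g^{\pi,\Qbf,\fbf}$ to first order in $(p,q,A)$, writes the resulting block matrix $M$ explicitly, determines its kernel exactly (it has dimension $2K^*$, with an explicit basis), and then checks $\Ker(M)\cap\Cfrak=\{0\}$, which is what makes the restricted determinant $H$ nonzero for every $K^*$.

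Two further points would derail the route you sketch. First, your proposed reduction to ``the infinitesimal version of Proposition \ref{prop_identifiability}'' does not work, as you half-suspect: local injectivity of $(\pi,\Qbf,\fbf)\mapsto g$ modulo permutations does not imply injectivity of its differential (compare $x\mapsto x^3$), so identifiability buys you nothing here; the paper makes no use of Proposition \ref{prop_identifiability} in this proof. Second, your heuristic that the kernel directions are ``tangent to the permutation orbit'' is wrong: permutations form a discrete group with no tangent directions. The actual kernel of $M$ is a $2K^*$-dimensional continuous family coming from rescaling invariances of $(\pi,\Qbf,\fbf)\mapsto g^{\pi,\Qbf,\fbf}$ when $\fbf$ is not constrained to consist of probability densities (scale $f_i$ and compensate in $\pi_i$ or in $\Qbf(i,i)$), and it is precisely the linear constraints defining $\Cfrak$ in equation (\ref{eq_conditions_det_invertible}) that exclude these directions. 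Without identifying this kernel correctly you cannot know what has to be checked, so the gap is not merely one of omitted bookkeeping. Finally, note that your suggested base point (a perturbation of the rank-one matrix with uniform rows) destroys the decoupling that makes the computation tractable; the paper instead computes at the non-ergodic point $\Qbf=\Id_{K^*}$ and only afterwards invokes continuity of $H$ to reach parameters satisfying \textbf{[HX]}.
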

\begin{proof}
Proof in Section \ref{sec_proof_H_nonzeropolynomial}.
\end{proof}

What should be retained from this lemma is that $H$ is a polynomial which is not identically zero on the set of parameters satisfying the identifiability conditions. This means that one can generically assume it to be different from zero, which corresponds to the assumption
\begin{description}
\item[\textbf{[Hdet]}]
$H(\pi^*, \Qbf^*, G(\fbf^*)) \neq 0$.
\end{description}

Since we assumed $\pi^*$ to be the stationary distribution of $\Qbf^*$, its coefficients\---and by extension $H$\---can be expressed as a rational function of the coefficients of $\Qbf^*$. Taking $H_1$ as the numerator of the rational function deduced from $H$, one gets another polynomial in the coefficients of $\Qbf^*$ and $G(\fbf^*)$ which is also non-zero. Thus, the following assumption\---which we will need to lower bound the error on the density $g^*$ by the error on the parameters\---is generically satisfied.
\begin{description}
\item[\textbf{[HdetStat]}]
$H_1(\Qbf^*, G(\fbf^*)) \neq 0$.
\end{description}

Note that \textbf{[Hdet]} and \textbf{[HdetStat]} are equivalent under the assumption \textbf{[HX]}.

\begin{theorem}
\label{th_lowerbound}
Assume \textbf{[HidA]} and \textbf{[Hdet]} hold. Then there exists a positive constant $c(\pi^*, \Qbf^*, \fbf^*)$ such that for all $\pi \in \Delta_{K^*}$, for all transition matrix $\Qbf$ of size $K^*$ and for all $\hbf \in \Fcal^{K^*}$ such that $\int h_i d\mu = 1$ for all $i \in [K^*]$,
\begin{equation*}
\| g^{\pi, \Qbf, \hbf} - g^{\pi^*, \Qbf^*, \fbf^*} \|_2^2
	\geq c(\pi^*, \Qbf^*, \fbf^*) \; \dperm((\pi, \Qbf, \hbf), (\pi^*, \Qbf^*, \fbf^*))^2.
\end{equation*}
\end{theorem}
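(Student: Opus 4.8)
The plan is to prove the stated inequality as an inverse modulus of continuity, complementing the upper bound \eqref{eq_upperbound}, by separating a local analysis near $(\pi^*,\Qbf^*,\fbf^*)$ from a global compactness argument. The central device is an orthogonal decomposition with respect to $V := \Span(f_1^*, \dots, f_{K^*}^*)$, which is $K^*$-dimensional by the linear independence part of \textbf{[HidA]}. Writing $h_k = h_k^V + h_k^\perp$ with $h_k^V$ the orthogonal projection of $h_k$ onto $V$, I would first record the algebraic fact that any elementary tensor having at least one factor in $V^\perp$ is orthogonal in $\Lbf^2(\Ycal^L, \mu^{\otimes L})$ to $V^{\otimes L}$, since the inner product of tensors factorizes. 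Because $g^* \in V^{\otimes L}$ and $g^{\pi,\Qbf,\hbf^V} \in V^{\otimes L}$, expanding each $h_{k_i} = h_{k_i}^V + h_{k_i}^\perp$ in $g^{\pi,\Qbf,\hbf}$ yields the Pythagorean identity $\|g^{\pi,\Qbf,\hbf} - g^*\|_2^2 = \|g^{\pi,\Qbf,\hbf^V} - g^*\|_2^2 + \|R\|_2^2$, where $R$ collects all terms with at least one $V^\perp$ factor. On the parameter side the same splitting gives, for every permutation, $\|(\tau\hbf)_k - f_k^*\|_2^2 = \|(\tau\hbf)_k^V - f_k^*\|_2^2 + \|h_k^\perp\|_2^2$, whence $\dperm((\pi,\Qbf,\hbf),(\pi^*,\Qbf^*,\fbf^*))^2 = \dperm((\pi,\Qbf,\hbf^V),(\pi^*,\Qbf^*,\fbf^*))^2 + \sum_k \|h_k^\perp\|_2^2$. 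It therefore suffices to bound the parallel contribution $\|g^{\pi,\Qbf,\hbf^V}-g^*\|_2^2$ below by a multiple of $\dperm((\pi,\Qbf,\hbf^V),\cdot)^2$ and the remainder $\|R\|_2^2$ below by a multiple of $\sum_k \|h_k^\perp\|_2^2$, and then take the smaller constant.

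For the parallel part, I would distinguish whether $\dperm((\pi,\Qbf,\hbf^V),\cdot)$ is small or bounded away from zero. In the small regime, after choosing the optimal permutation, I write $h_k^V - f_k^* = \sum_j A(k,j) f_j^*$, $p = \pi - \pi^*$, $q = \Qbf - \Qbf^*$, so that $\|g^{\pi,\Qbf,\hbf^V}-g^*\|_2^2 = \Nfrak(p,q,A)$. The constraints $\int h_k = 1$ and stochasticity place $(p,q,A)$ in a neighborhood of $\Cfrak$, and since the gradient of $\Nfrak$ vanishes at the origin while its Hessian is the quadratic form $M_\Cfrak$, assumption \textbf{[Hdet]} (consistent with Lemma \ref{lemma_H_nonzeropolynomial}) makes $M_\Cfrak$ positive definite on $\Cfrak_\text{red}$; hence $\Nfrak(p,q,A) \geq c_0 (|p|^2 + |q|^2 + \|A\|_F^2)$ for small perturbations. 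Invertibility of the Gram matrix $G(\fbf^*)$ (again from \textbf{[HidA]}) turns $\|A\|_F^2$ into $\sum_k \|h_k^V - f_k^*\|_2^2$ up to constants, giving a local bound $\|g^{\pi,\Qbf,\hbf^V}-g^*\|_2^2 \geq c_1 \dperm((\pi,\Qbf,\hbf^V),\cdot)^2$. In the complementary regime where $\dperm((\pi,\Qbf,\hbf^V),\cdot) \geq \varepsilon$, I use compactness: $\pi$ lies in the simplex, $\Qbf$ among transition matrices, and each $h_k^V$ in the closed ball of radius $\cquad$ of the finite-dimensional space $V$, so the ratio $\|g^{\pi,\Qbf,\hbf^V}-g^*\|_2^2 / \dperm((\pi,\Qbf,\hbf^V),\cdot)^2$ is continuous on a compact set; by Proposition \ref{prop_identifiability} (which crucially does not require the $h_k^V$ to be probability densities) its numerator vanishes only when $\dperm=0$, so it attains a positive minimum $c_1'$.

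The remaining task is the orthogonal bound $\|R\|_2^2 \gtrsim \sum_k \|h_k^\perp\|_2^2$, which I expect to be the main obstacle. Isolating in $R$ the component $R_1$ linear in a single $h_{k_i}^\perp$ (orthogonal to the higher-order terms by the same factorization argument, so $\|R\|_2^2 \geq \|R_1\|_2^2$), the coefficient attached to $h_k^\perp$ is governed by the visitation weights of state $k$ under $(\pi,\Qbf)$. When every state is sufficiently reachable these weights are bounded below and $\|R_1\|_2^2$ controls $\sum_k\|h_k^\perp\|_2^2$; the delicate case is when some state is almost unreachable, for then its orthogonal perturbation is nearly invisible to $R$. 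This case is handled by observing that, thanks to the uniform $\Lbf^2$ bound $\cquad$ in \textbf{[HF]}, $\dperm$ is bounded by a constant, and that a near-unreachable state forces $g^{\pi,\Qbf,\hbf^V}$ to be essentially a density of order strictly less than $K^*$, which Corollary \ref{cor_distance_g_ast_S_KM} keeps at distance at least $\min_{K<K^*} d_K > 0$ from $g^*$; the parallel term then already dominates $\dperm^2$. Gluing the local quadratic estimate, the global compactness estimate, and this reachability dichotomy, with $c(\pi^*,\Qbf^*,\fbf^*)$ taken as the minimum of the constants produced, yields the claim. The subtle point throughout is that the non-compactness of $\Fcal$ enters only through the direction orthogonal to $\Span(\fbf^*)$, and that the boundedness assumptions in \textbf{[HF]} confine it enough to be treated separately from the compact, finite-dimensional core.
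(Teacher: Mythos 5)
Your reduction via the orthogonal split $h_k = h_k^V + h_k^\perp$ with $V = \Span(f_1^*,\dots,f_{K^*}^*)$ is sound as far as the two Pythagorean identities go, and your treatment of the parallel part (local positive definiteness from \textbf{[Hdet]}, compactness plus Proposition \ref{prop_identifiability} away from the minimizer) is in the right spirit. The gap is in the orthogonal bound $\|R\|_2^2 \gtrsim \sum_k \|h_k^\perp\|_2^2$, and specifically in the claim that unreachability of a state is the only obstruction to it. The coefficient attached to $h_k^\perp$ in $R_{\{j\}}$ is not a visitation weight but a tensor in $V^{\otimes (L-1)}$ built from $(\pi,\Qbf)$ \emph{and} from the projections $h^V$; what your bound requires is a uniform lower bound on the Gram matrix of these tensors across $k$, and that can fail with all states uniformly visited. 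Concretely, take $K^*=2$, $L=3$, $\pi=(1/2,1/2)$, $\Qbf$ with all entries $1/2$, and $h_1=f+e$, $h_2=f-e$ with $f\in V$, $e\in V^\perp$, $\int e\,d\mu=0$: then $g^{\pi,\Qbf,\hbf}=\bigl(\tfrac{h_1+h_2}{2}\bigr)^{\otimes 3}=f^{\otimes 3}=g^{\pi,\Qbf,\hbf^V}$, so $R=0$ while $\sum_k\|h_k^\perp\|_2^2=2\|e\|_2^2>0$. Every state is visited half the time, so this falls outside your escape case, yet claim (b) is violated. The theorem survives only because here $g^{\pi,\Qbf,\hbf^V}=f^{\otimes3}\in S_1$ sits at distance at least $d_1$ from $g^*$ and $\dperm$ is bounded --- i.e.\ the degeneracy of the orthogonal term must be compensated by the parallel term. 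The set on which your Gram bound degenerates is governed by linear dependence of the weighted past/future tensors (coinciding rows of $\Qbf$, coinciding $h_k^V$, vanishing weights, and combinations thereof), and you would need to show that the parallel term is bounded below on \emph{all} of it, not just on the unreachability stratum; at that point the clean decoupling of (a) and (b) collapses.

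For comparison, the paper does not attempt this decoupling: its proof only verifies that \textbf{[Hdet]} together with the invertibility of $A\mapsto A\fbf^*$ (from \textbf{[HidA]}) makes the second-order form of $(p,q,\hbf)\mapsto\|g^{\pi^*+p,\Qbf^*+q,\fbf^*+\hbf}-g^*\|_2^2$ nondegenerate, and then defers to Theorem 8 of \cite{dCGL15}, whose argument is a proof by contradiction on the whole ratio $\|g-g^*\|_2^2/\dperm^2$ with subsequence extraction, treating the parallel and orthogonal components jointly at the limit. A secondary issue with your local step: since $\mu$ need not be finite, $\int h_k^V\,d\mu=1-\int h_k^\perp\,d\mu$ is not controlled by $\|h_k^\perp\|_2$, so the matrix $A$ defined by $h_k^V-f_k^*=\sum_j A(k,j)f_j^*$ need not satisfy the zero-sum constraints defining $\Cfrak$; positive definiteness of the quadratic form on $\Cfrak$ does not extend to a neighbourhood of $\Cfrak$ (the kernel directions $e_i^1,e_i^2$ are transverse to it), so "in a neighbourhood of $\Cfrak$" does not by itself yield the quadratic lower bound.
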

\begin{proof}
Proof in Section \ref{sec_proof_lowerbound}.
\end{proof}

The following theorem is a direct consequence of the above results. It provides an oracle inequality on the parameters conditionally to the fact that the order has been correctly estimated.

\begin{theorem}
\label{th_oracle_param}
Assume \textbf{[HX]}, \textbf{[HidA]}, \textbf{[HF]} and \textbf{[Hdet]} hold. Also assume that for all $f \in \Fcal$, $\int f d\mu = 1$.

Then there exists positive constants $(n_0, \rho, A) \equiv (n_0, \rho, A)(\cquad, \cinf, \Qbf^*, L)$ such that if the penalty satisfies
\begin{equation*}
\displaystyle \forall n, \; \forall M \leq n, \; \forall K \leq n, \quad \pen(n,M,K) \geq \rho (MK + K^2 - 1) \frac{\log(n)}{n}
\end{equation*}
then for all $n \geq n_0$, for all $x > 0$, conditionally to  $\{ \hat{K}_\text{l.s.} = K^* \}$, with probability larger than ${1 - e^{-x}}$:
\begin{multline*}
\dperm((\hat{\pi}, \hat{\Qbf}, \hat{\fbf}), (\pi^*, \Qbf^*, \fbf^*))
	\leq \frac{4 \cquad^L \sqrt{L K^*}}{c(\Qbf^*, \fbf^*)} \times \\
	\Bigg[ \inf_{M \leq n} \Bigg\{ \sum_{k=1}^{K^*} \| f^*_{M, k} - f^*_k \|_2^2
	+ \pen(n,M,K^*) \Bigg\}
		+ A \frac{x}{n} \Bigg],
\end{multline*}
where $f^*_{M, k}$ is the projection of $f^*_k$ on $\Pfrak_M$.
\end{theorem}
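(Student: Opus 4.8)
The plan is to assemble the three ingredients already established: the lower bound of Theorem~\ref{th_lowerbound}, the density oracle inequality of Theorem~\ref{th_oracle_g}, and the parameter-to-density upper bound~(\ref{eq_upperbound}). I would first fix a realization in $\{\hat K_\text{l.s.} = K^*\}$. On this event the selected model has exactly $K^*$ states, so $\hat\Qbf \in \Qcal_{K^*}$, its stationary distribution $\hat\pi$ lies in $\Delta_{K^*}$, and $\hat\fbf \in (\Fcal \cap \Pfrak_{\hat M})^{K^*} \subset \Fcal^{K^*}$ with $\hat g = g^{\hat\pi, \hat\Qbf, \hat\fbf}$. Since we assume $\int f \, d\mu = 1$ for every $f \in \Fcal$, each $\hat f_k$ integrates to $1$, so the triple $(\hat\pi, \hat\Qbf, \hat\fbf)$ satisfies every requirement placed on $(\pi, \Qbf, \hbf)$ by Theorem~\ref{th_lowerbound}. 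The assumptions \textbf{[HidA]} and \textbf{[Hdet]} needed there, and \textbf{[HX]}, \textbf{[HF]} needed for Theorem~\ref{th_oracle_g}, all hold.

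I would then apply Theorem~\ref{th_lowerbound} to the estimated parameters, which gives $c(\pi^*, \Qbf^*, \fbf^*) \, \dperm((\hat\pi, \hat\Qbf, \hat\fbf), (\pi^*, \Qbf^*, \fbf^*))^2 \leq \|\hat g - g^*\|_2^2$. The constant $c$ depends only on the true parameters, so applying this inequality pointwise to the random estimator is legitimate. Next I would invoke Theorem~\ref{th_oracle_g}: on an event $\Omega_x$ with $\Pbb(\Omega_x) \geq 1 - e^{-x}$ one has $\|\hat g - g^*\|_2^2 \leq 4 \inf_{K \leq n, M \leq n} \{ \|g^*_{K,M} - g^*\|_2^2 + \pen(n,M,K) \} + 4 A x / n$. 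Restricting the infimum to $K = K^*$ only enlarges the right-hand side and reduces the problem to bounding the bias $\|g^*_{K^*,M} - g^*\|_2^2$ of the best $K^*$-state model supported on $\Pfrak_M$.

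To control that bias I would exhibit an explicit competitor in $S_{K^*,M}$. Since $\Fcal$ is closed under projection onto $\Pfrak_M$ (\textbf{[HF]}), the projected densities $\fbf^*_M = (f^*_{M,1}, \dots, f^*_{M,K^*})$ lie in $(\Fcal \cap \Pfrak_M)^{K^*}$, and together with $\Qbf^* \in \Qcal_{K^*}$ they define $g^{\Qbf^*, \fbf^*_M} \in S_{K^*,M}$ (whose initial law is the stationary distribution $\pi^*$ of $\Qbf^*$). As $g^*_{K^*,M}$ minimizes the $\Lbf^2$ distance over $S_{K^*,M}$, one has $\|g^*_{K^*,M} - g^*\|_2 \leq \|g^{\pi^*, \Qbf^*, \fbf^*_M} - g^{\pi^*, \Qbf^*, \fbf^*}\|_2$; bounding the permutation distance in~(\ref{eq_upperbound}) by its value at the identity (so that only the emission densities differ) yields $\|g^*_{K^*,M} - g^*\|_2^2 \leq \cquad^{2L} L K^* \sum_{k=1}^{K^*} \|f^*_{M,k} - f^*_k\|_2^2$. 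Substituting this into the previous displays and collecting the factor $4$ of the density oracle, the factor $\cquad^L \sqrt{L K^*}$ of~(\ref{eq_upperbound}) and the factor $c^{-1}$ of the lower bound produces, on $\Omega_x \cap \{\hat K_\text{l.s.} = K^*\}$, a bound of the announced form on $\dperm((\hat\pi, \hat\Qbf, \hat\fbf), (\pi^*, \Qbf^*, \fbf^*))$.

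Because each ingredient is already proved, the theorem is essentially a matter of assembly and I do not expect a deep obstacle. The points needing care are: (i) verifying, as above, that the random triple $(\hat\pi, \hat\Qbf, \hat\fbf)$ meets the normalization and membership hypotheses of Theorem~\ref{th_lowerbound} on the conditioning event; (ii) the choice of the single competitor $g^{\Qbf^*, \fbf^*_M}$, which is what lets the $\Lbf^2$ bias be dominated by $\sum_k \|f^*_{M,k} - f^*_k\|_2^2$; and (iii) the reading of the probabilistic conclusion, namely that the inequality holds on $\Omega_x \cap \{\hat K_\text{l.s.} = K^*\}$ with $\Pbb(\Omega_x) \geq 1 - e^{-x}$, which is what ``conditionally to $\{\hat K_\text{l.s.} = K^*\}$, with probability larger than $1 - e^{-x}$'' is taken to mean. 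The only genuinely delicate step is then the bookkeeping of the multiplicative constants as they propagate through the square roots relating $\dperm$, $\|\cdot\|_2$ and the emission-density errors.
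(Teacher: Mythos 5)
Your assembly is exactly the paper's (implicit) proof: Theorem \ref{th_oracle_param} is presented there as a direct consequence of Theorem \ref{th_lowerbound}, Theorem \ref{th_oracle_g} and equation (\ref{eq_upperbound}), combined precisely as you do, with $g^{\pi^*,\Qbf^*,\fbf^*_M}$ as the competitor that reduces the bias term to $\sum_k \| f^*_{M,k}-f^*_k\|_2^2$. The one caveat is that this chain naturally bounds $\dperm((\hat{\pi},\hat{\Qbf},\hat{\fbf}),(\pi^*,\Qbf^*,\fbf^*))^2$ rather than $\dperm$ itself (the squared form is also what the rate discussion after Corollary \ref{cor_oracle_even_when_mistaken} requires), so the unsquared left-hand side in the displayed statement should be read as a typo in the paper rather than as something your argument fails to deliver.
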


It is now possible to get the convergence rate of the estimators of the parameters. In order to take the event where $\hat{K}_\text{l.s.} \neq K^*$ into account, we agree that the distance between the parameters of two HMMs with different orders is bounded by some constant $C_\text{err}$. Note that $C_\text{err}$ could even be taken as a power of $n$ without changing anything to our result.
\begin{corollary}
\label{cor_oracle_even_when_mistaken}
Assume \textbf{[HX]}, \textbf{[HidA]}, \textbf{[HF]} and \textbf{[Hdet]} hold.  Also assume that for all $f \in \Fcal$, $\int f d\mu = 1$, and that the penalty satisfies
\begin{equation*}
\displaystyle \forall n, \; \forall M \leq n, \; \forall K \leq n, \quad \pen(n,M,K) = (MK + K^2 - 1) \frac{\log(n)^2}{n}
\end{equation*}
Then there exists a positive constant $A \equiv A(\cquad, \cinf, \Qbf^*, L)$ such that for all $\beta > 1$, there exists a positive constant $n_0 \equiv n_0(\cquad, \cinf, \Qbf^*, L, \beta)$ such that for all $n \geq n_0$ and for all $C_\text{err} > 0$,
\begin{multline*}
\Ebb\left[ \one_{\hat{K} \neq K^*} C_\text{err} +  \one_{\hat{K} = K^*} \dperm((\hat{\pi}, \hat{\Qbf}, \hat{\fbf}), (\pi^*, \Qbf^*, \fbf^*)) \right]
	\leq \frac{4 \cquad^L \sqrt{L K^*}}{c(\Qbf^*, \fbf^*)} \times \\
	\inf_{M \leq n} \Bigg\{ \sum_{k=1}^{K^*} \| f^*_{M, k} - f^*_k \|_2^2
	+ \pen(n,M,K^*) \Bigg\}
		+ \frac{A}{c(\Qbf^*, \fbf^*) \, n}
		+ \frac{C_\text{err}}{n^\beta},
\end{multline*}
and $\Pbb(\hat{K}_\text{l.s.} \neq K^*) = O(n^{-\beta}).$
\end{corollary}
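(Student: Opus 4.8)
The plan is to split the expectation on the left according to the event $\{\hat{K}_\text{l.s.} = K^*\}$ and its complement, and to bound the two resulting terms using the consistency results (Theorems \ref{th_underestimation} and \ref{th_overestimation}) on the complement and the conditional oracle inequality (Theorem \ref{th_oracle_param}) on the event itself. The whole argument is an assembly of these three theorems, so no new probabilistic idea is needed: the work lies in checking their hypotheses for the specific overpenalizing penalty and in converting the high-probability oracle bound into an expectation bound.

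First I would verify that $\pen(n,M,K) = (MK+K^2-1)\log(n)^2/n$ satisfies the hypotheses of all three theorems for $n$ large. Writing it as $\log(n)\cdot(MK+K^2-1)\log(n)/n$, its leading factor $\log(n)$ diverges, so for $n$ large it dominates $\rho(MK+K^2-1)\log(n)/n$ for any fixed $\rho$ (in particular the constants of the three theorems), while still tending to $0$ for fixed $M,K$. For condition \textbf{[Hpen]}, I would compute, for $K>K^*$, $\pen(n,M,K)-\pen(n,M,K^*) = (M(K-K^*)+K^2-(K^*)^2)\log(n)^2/n$ and observe that the ratio $(MK+K^2-1)/(M(K-K^*)+K^2-(K^*)^2)$ stays bounded by a constant $R(K^*)$ (the worst case being $K=K^*+1$); hence for $n\geq\exp(R(K^*)(\rho+\alpha))$ the inequality defining \textbf{[Hpen]}$(\alpha,\rho)$ holds, for any prescribed $\alpha\geq 0$. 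This is precisely the overpenalizing regime, which is what allows \textbf{[Hpen]}$(\alpha,\rho)$ to hold for arbitrarily large $\alpha$.

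Next, fix $\beta>1$. Theorem \ref{th_underestimation} gives $\Pbb(\hat{K}_\text{l.s.}<K^*)\leq e^{-\beta_{\mathrm u} n}$, which is $o(n^{-\beta})$. Applying Theorem \ref{th_overestimation} with $\alpha:=2\beta/\beta_{\mathrm o}$, where $\beta_{\mathrm o}$ is that theorem's constant, gives $\Pbb(\hat{K}_\text{l.s.}>K^*)\leq n^{-\beta_{\mathrm o}\alpha}=n^{-2\beta}\leq \tfrac12 n^{-\beta}$ for $n$ large. Hence $\Pbb(\hat{K}_\text{l.s.}\neq K^*)\leq n^{-\beta}$ for $n\geq n_0(\beta)$, which already yields the final claim and bounds the complement term by $C_\text{err}\,\Pbb(\hat{K}_\text{l.s.}\neq K^*)\leq C_\text{err}\,n^{-\beta}$.

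Finally, for the term $\Ebb[\one_{\hat{K}=K^*}W]$ with $W:=\dperm((\hat{\pi},\hat{\Qbf},\hat{\fbf}),(\pi^*,\Qbf^*,\fbf^*))$, I would integrate the tail bound of Theorem \ref{th_oracle_param}. Writing $B:=4\cquad^L\sqrt{LK^*}/c(\Qbf^*,\fbf^*)$ and $D:=\inf_{M\leq n}\{\sum_{k=1}^{K^*}\|f^*_{M,k}-f^*_k\|_2^2+\pen(n,M,K^*)\}$, the theorem gives, conditionally on $\{\hat{K}_\text{l.s.}=K^*\}$, $\Pbb(W>BD+BA'x/n\mid\hat{K}_\text{l.s.}=K^*)\leq e^{-x}$ for all $x>0$, where $A'$ is its constant. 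Using $\Ebb[\one_{\hat{K}=K^*}W]=\int_0^\infty\Pbb(\{\hat{K}_\text{l.s.}=K^*\}\cap\{W>t\})\,dt$, bounding the integrand by $1$ for $t\leq BD$ and by $e^{-n(t-BD)/(BA')}$ for $t>BD$, the integral evaluates to $BD+BA'/n$. Setting $A:=4\cquad^L\sqrt{LK^*}A'$, which depends only on $\cquad,\cinf,\Qbf^*,L$ since $K^*$ is the size of $\Qbf^*$, turns $BA'/n$ into $A/(c(\Qbf^*,\fbf^*)\,n)$, and summing the two terms reproduces the claimed inequality. The delicate points are the bounded-ratio check for \textbf{[Hpen]} and the tail integration (legitimate because $W\geq 0$ and the oracle bound holds for every $x>0$); I expect the latter, together with the bookkeeping of the constants $\rho$, $n_0$ and $A'$ across the three invoked results, to be the most error-prone part, though none of it is conceptually hard.
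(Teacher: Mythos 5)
Your proposal is correct and follows the route the paper intends: the corollary is stated as a direct assembly of Theorem \ref{th_oracle_param}, Theorem \ref{th_underestimation} and Theorem \ref{th_overestimation}, and your decomposition over $\{\hat{K}_\text{l.s.}=K^*\}$, the bounded-ratio verification of \textbf{[Hpen]} for the $\log(n)^2/n$ penalty (with $n_0$ absorbing the dependence on $\beta$ and $K^*$), and the tail integration of the conditional oracle bound are exactly the required steps. The constant bookkeeping ($A=4\cquad^L\sqrt{LK^*}A'$ depending only on $\cquad,\cinf,\Qbf^*,L$) also checks out.
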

Let us discuss what this corollary implies. The approximation error $\sum_{k=1}^{K^*} \| f^*_{M, k} - f^*_k \|_2^2$ can be bounded in a standard way by $O(M^{-2s/D})$ where $s > 0$ is the regularity of the emission densities, see for instance \cite{devore1993constructive}. One can obtain a trade-off between approximation error and penalty by choosing $M \approx (n/\log(n)^2)^{D/(2s+D)}$, which leads to the optimal rate of convergence $(n/\log(n)^2)^{-2s/(2s+D)}$, up to a logarithmic factor. This shows that our estimators are adaptive, quasi-rate minimax and converge almost surely to the right number of states, all at the same time.

\section{Spectral estimation}
\label{sec_spectral_theory}

In this section, we introduce our spectral order estimator. We will assume \textbf{[HX]} and \textbf{[HidA]} hold.

The idea of this method is to use the matrix containing the coordinates of the density of two consecutive observations in an orthonormal basis. Take $M \in \Mcal$ and let $\Phi_M = (\varphi_1^{(M)}, \dots, \varphi_M^{(M)})$ be an orthonormal basis of $\Pfrak_M$. For ease of notation, we will drop the dependency in $M$ and write $\varphi_a$ instead of $\varphi_a^{(M)}$. Let us introduce the matrice $\Nbf_M$ and its empirical estimator, defined by
\begin{align*}
\forall a,b \in [M], \quad \Nbf_M(a,b) &:= \Ebb [ \varphi_{a}(Y_1)\varphi_{b}(Y_2) ], \\
\forall a,b \in [M], \quad \hat{\Nbf}_M(a,b) &:=\frac{1}{n} \sum_{s=1}^{n}\varphi_{a}(Y_{s})\varphi_{b}( Y_{s+1}).
\end{align*}
$\Nbf_M$ contains the coordinates of the density of $(Y_1, Y_2)$ with respect to $\mu^{\otimes 2}$ on the basis $\Phi_M$. It holds that
\begin{equation}
\Nbf_M = \Obf_M \text{Diag}(\pi^*) \Qbf^* \Obf_M^\top,
\label{eq_NM}
\end{equation}
with $\Obf_M$ the coordinates of the emission densities on the orthonormal basis:
\begin{equation*}
\forall m \in [M], \; \forall k \in \Xcal, \quad \Obf_M(m,k) := \Ebb [ \varphi_{m}(Y_1) | X_1 = k ] = \int \varphi_m f^*_k d\mu.
\end{equation*}
When the emission densities are linearly independent, $\Obf_M$ has full rank for $M$ large enough.

The key remark for our method is that $\Nbf_M$ contains explicit information about the order of the HMM, as stated in the following lemma:
\begin{lemma}
\label{lemma_rankNM}
There exists $M_0 \equiv M_0(\Qbf^*, \Phi_M, \fbf^*)$ such that for all $M \geq M_0$, $\Nbf_M$ has rank $K^*$.
\end{lemma}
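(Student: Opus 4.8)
The plan is to read off the rank of $\Nbf_M$ directly from the factorization (\ref{eq_NM}). Write $C := \text{Diag}(\pi^*)\Qbf^*$, a $K^* \times K^*$ matrix, so that $\Nbf_M = \Obf_M C \Obf_M^\top$. First I would observe that $C$ is invertible: under \textbf{[HX]} the chain is stationary ergodic with $\Qbf^* \in \Qcal_{K^*}$ irreducible, so its stationary distribution $\pi^*$ is positive elementwise and $\text{Diag}(\pi^*)$ is invertible, while $\Qbf^*$ is invertible by \textbf{[HidA]}. In particular, since $C$ is a $K^* \times K^*$ block sitting between $\Obf_M$ and $\Obf_M^\top$, the matrix $\Nbf_M$ always has rank at most $\text{rank}(C) = K^*$. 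It therefore suffices to establish the reverse inequality for all $M$ large enough.

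The heart of the matter is to show that $\Obf_M$ has full column rank $K^*$ once $M$ is large. The $k$-th column of $\Obf_M$ is precisely the coordinate vector, in the orthonormal basis $\Phi_M$, of the projection $f^*_{M,k}$ of $f^*_k$ onto $\Pfrak_M$; consequently $\Obf_M^\top \Obf_M = (\langle f^*_{M,i}, f^*_{M,j}\rangle)_{i,j \in \Xcal}$ is exactly the Gram matrix of the projected densities. I would then argue that this Gram matrix converges, as $M \to \infty$, to the Gram matrix $G(\fbf^*) = (\langle f^*_i, f^*_j\rangle)_{i,j \in \Xcal}$ of the emission densities themselves. Indeed, since the subspaces $(\Pfrak_M)_{M \in \Mcal}$ are nested with union dense in $\Lbf^2(\Ycal, \mu)$, we have $f^*_{M,k} \to f^*_k$ in $\Lbf^2$; writing $\langle f^*_{M,i}, f^*_{M,j}\rangle = \langle f^*_{M,i}, f^*_j\rangle$ and using continuity of the inner product gives the convergence entrywise. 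Under \textbf{[HidA]} the family $\fbf^*$ is linearly independent, so $G(\fbf^*)$ is positive definite and $\det G(\fbf^*) \neq 0$; by continuity of the determinant there exists $M_0 \equiv M_0(\Qbf^*, \Phi_M, \fbf^*)$ such that $\det(\Obf_M^\top \Obf_M) \neq 0$, hence $\Obf_M$ has rank $K^*$, for all $M \geq M_0$.

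It then remains to conclude by elementary rank bookkeeping. For $M \geq M_0$ the matrix $\Obf_M$ is injective (full column rank), so left multiplication by $\Obf_M$ preserves rank, giving $\text{rank}(\Nbf_M) = \text{rank}\bigl(\Obf_M (C \Obf_M^\top)\bigr) = \text{rank}(C \Obf_M^\top)$; since $C$ is invertible this equals $\text{rank}(\Obf_M^\top) = K^*$. Together with the upper bound $\text{rank}(\Nbf_M) \leq K^*$ noted above, this shows that $\Nbf_M$ has rank exactly $K^*$ for all $M \geq M_0$, as claimed. I expect the only genuinely analytic step to be the convergence of the projected Gram matrix to $G(\fbf^*)$, which is what produces the threshold $M_0$; everything else is finite-dimensional linear algebra that is uniform in $M$.
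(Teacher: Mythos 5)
Your proof is correct and follows exactly the route the paper intends: the paper does not spell out a proof of this lemma but simply notes, just before stating it, that $\Obf_M$ has full column rank for $M$ large enough when the emission densities are linearly independent, which combined with the factorization $\Nbf_M = \Obf_M \text{Diag}(\pi^*)\Qbf^*\Obf_M^\top$ and the invertibility of $\text{Diag}(\pi^*)\Qbf^*$ gives the result. Your Gram-matrix convergence argument is a clean way to justify the full-rank claim that the paper leaves implicit.
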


In the following, we will assume $M \geq M_0$ for $M_0$ given by this lemma.

In practice, one only has access to the matrix $\hat{\Nbf}_M$, which can be seen as a noisy version of $\Nbf_M$. In particular, there is no reason for it to have only $K^*$ nonzero singular values. On the contrary, the spectrum becomes noisy, and when some singular values of $\Nbf_M$ are too small, they can be masked by this noise. As seen in equation (\ref{eq_NM}), this can occur when $\Qbf^*$ or $\Obf_M$ are close to not having full rank, which means for $\Obf_M$ that the emission densities are almost linearly dependent.

Denote by $\sigma_1(A) \geq \sigma_2(A) \geq \dots$ the singular values of the matrix $A$. We can now state the theorem proving the consistency of the spectral order estimator:

\begin{theorem}
\label{th_spectr_consistent}
Let $\hat{K}_\text{sp.}(C) = \# \{ i \; | \; \sigma_i(\hat{\Nbf}_M) > C \sqrt{\log(n) / n} \}$.

There exists $C_0 \equiv C_0(\Qbf^*, \Phi_M)$ and $n_0 \equiv n_0(\Qbf^*, \Phi_M, \Obf_M^*)$ such that for all $C \geq C_0$ and $n \geq n_0 C^2 (1 + \log(C))$,
\begin{equation*}
\Pbb(\hat{K}_\text{sp.}(C) \neq K^*) \leq n^{-2}
\end{equation*}
so that $\hat{K}_\text{sp.}(C) \longrightarrow K^*$ almost surely.
\end{theorem}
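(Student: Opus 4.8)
The plan is to reduce the consistency of the thresholded rank estimator to a concentration bound on the spectral distance $\|\hat{\Nbf}_M - \Nbf_M\|$. The key structural input is Lemma \ref{lemma_rankNM}, which guarantees that for $M \geq M_0$ the matrix $\Nbf_M$ has exactly $K^*$ nonzero singular values. Write these as $\sigma_1(\Nbf_M) \geq \dots \geq \sigma_{K^*}(\Nbf_M) > 0 = \sigma_{K^*+1}(\Nbf_M) = \dots$, and set $\sigma_{\min} := \sigma_{K^*}(\Nbf_M)$, the smallest nonzero singular value; note $\sigma_{\min}$ depends only on $\Qbf^*$ and $\Phi_M$ (through $\Obf_M$), consistent with the stated constants. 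The central tool is Weyl's inequality for singular values, which gives $|\sigma_i(\hat{\Nbf}_M) - \sigma_i(\Nbf_M)| \leq \|\hat{\Nbf}_M - \Nbf_M\|_{\mathrm{op}}$ for every $i$. Thus if I can show that with high probability $\|\hat{\Nbf}_M - \Nbf_M\|_{\mathrm{op}} < C\sqrt{\log(n)/n}$ while simultaneously the true gap satisfies $\sigma_{\min} > 2 C \sqrt{\log(n)/n}$, then every ``true'' singular value stays above the threshold and every ``spurious'' one stays below it, forcing $\hat{K}_\text{sp.}(C) = K^*$.

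First I would establish the concentration bound. Each entry $\hat{\Nbf}_M(a,b) - \Nbf_M(a,b) = \frac{1}{n}\sum_{s=1}^n \big(\varphi_a(Y_s)\varphi_b(Y_{s+1}) - \Ebb[\varphi_a(Y_1)\varphi_b(Y_2)]\big)$ is a centered additive functional of the stationary HMM, and under \textbf{[HX]} the chain is ergodic with geometric mixing inherited from the finite hidden state space and irreducibility of $\Qbf^*$. The summands are bounded because $\|\varphi_a\|_\infty$ is controlled on each fixed space $\Pfrak_M$ (the constants $C_0, n_0$ are allowed to depend on $\Phi_M$, which absorbs these sup-norm bounds). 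I would apply a Bernstein-type concentration inequality for bounded functionals of geometrically mixing Markov chains to each of the $M^2$ entries, obtaining a bound of order $\sqrt{\log(n)/n}$ on each $|\hat{\Nbf}_M(a,b) - \Nbf_M(a,b)|$ with probability at least $1 - n^{-3}$ (say), then sum over entries and take a union bound. Since $\|\cdot\|_{\mathrm{op}} \leq \|\cdot\|_F = \big(\sum_{a,b}(\cdot)^2\big)^{1/2}$, this yields $\Pbb\big(\|\hat{\Nbf}_M - \Nbf_M\|_{\mathrm{op}} > C_0 \sqrt{\log(n)/n}\big) \leq n^{-2}$ for a suitable $C_0 \equiv C_0(\Qbf^*, \Phi_M)$, provided $n$ is large enough for the Bernstein bound to be in its sub-Gaussian regime (this is exactly where the requirement $n \geq n_0 C^2(1+\log C)$ enters, since the deviation level $C\sqrt{\log n/n}$ must dominate the variance term).

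Second, I would verify the separation of scales. Since $\sigma_{\min}$ is a fixed positive constant, for $n$ large enough (depending on $\sigma_{\min}$, hence on $\Obf_M^*$, matching the dependence of $n_0$) we have $C\sqrt{\log(n)/n} < \sigma_{\min}/2$, so $2C\sqrt{\log(n)/n} < \sigma_{\min}$. Combining this with the concentration event and Weyl's inequality: on the event $\|\hat{\Nbf}_M - \Nbf_M\|_{\mathrm{op}} \leq C\sqrt{\log(n)/n}$, for $i \leq K^*$ we get $\sigma_i(\hat{\Nbf}_M) \geq \sigma_{\min} - C\sqrt{\log(n)/n} > C\sqrt{\log(n)/n}$, while for $i > K^*$ we get $\sigma_i(\hat{\Nbf}_M) \leq 0 + C\sqrt{\log(n)/n}$. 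Hence exactly the first $K^*$ singular values exceed the threshold, so $\hat{K}_\text{sp.}(C) = K^*$ on this event, giving $\Pbb(\hat{K}_\text{sp.}(C) \neq K^*) \leq n^{-2}$. The almost sure convergence follows immediately by Borel--Cantelli, since $\sum_n n^{-2} < \infty$.

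The main obstacle is the concentration step: the summands are neither independent nor bounded in a single martingale-difference sense, so I must invoke a suitable exponential inequality for functionals of geometrically ergodic or uniformly mixing Markov chains (the hidden chain mixes geometrically, and the observations are conditionally independent given the chain, so the pair $(Y_s, Y_{s+1})$ inherits mixing). Care is needed because the summands $\varphi_a(Y_s)\varphi_b(Y_{s+1})$ overlap across consecutive $s$, creating short-range dependence; a blocking argument or a direct Markov-chain Bernstein inequality handles this, and the precise form of the variance proxy is what dictates the $n_0 C^2(1+\log C)$ threshold. I expect this technical concentration bound to be exactly the content of Lemma \ref{lemma_control_of_Z} or a close variant thereof, which the excerpt advertises as the main technical engine reused across the paper.
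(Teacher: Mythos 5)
Your proof follows the same route as the paper's: concentration of $\hat{\Nbf}_M$ around $\Nbf_M$ at rate $\sqrt{\log(n)/n}$, Weyl's inequality to transfer this to the singular values, the gap condition $2C\sqrt{\log(n)/n} < \sigma_{K^*}(\Nbf_M)$ (which is exactly what the constraint $n \geq n_0 C^2(1+\log C)$ with $n_0 = 12/\sigma_{K^*}(\Nbf_M)^2$ encodes), and Borel--Cantelli. The only differences are minor: the paper obtains the concentration step by citing a ready-made Frobenius-norm deviation bound for $\hat{\Nbf}_M$ (Lemma \ref{lemma_deviations_NM}, taken from the appendix of \cite{dCGLLC15}) rather than by the entrywise Bernstein-plus-union-bound argument you sketch, and that lemma --- not Lemma \ref{lemma_control_of_Z}, which concerns the least squares empirical process --- is the relevant technical input.
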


\begin{remark}
It is possible to take $M \longrightarrow \infty$, $n_0$ constant and $C_0$ depending on $M$ in an explicit way as long as $M$ grows slowly enough, that is ${\eta_2(\Phi_M) \leq \text{cst} \cdot \sqrt{n/\log(n)}}$ and $C_0 = \text{cst} \cdot \eta_2(\Phi_M)$ where $\eta_2(\Phi_M)$ is defined in Lemma \ref{lemma_deviations_NM}.
\end{remark}

\begin{proof}
The following result from appendix E of \cite{dCGLLC15} allows to control the difference between the spectra of $\Nbf_M$ and $\hat{\Nbf}_M$.

\begin{lemma}
\label{lemma_deviations_NM}
There exists some constant $\Ccal_*$ depending only on $\Qbf^*$ such that for any positive $u$, $M$ and $n$,
\begin{equation*}
\Pbb\left[
	\| \Nbf_M - \hat{\Nbf}_M \|_F
		\geq \frac{\eta_2(\Phi_M) \Ccal_*}{\sqrt{n}} (1 + u)
\right]
	\leq e^{-u^2}
\end{equation*}
where
\begin{equation*}
\eta_2^2(\Phi_M) = \sup_{y,y' \in \Ycal^2} \, \sum_{a,b=1}^M \, (\varphi_a(y_1) \varphi_b(y_2) - \varphi_a(y'_1) \varphi_b(y'_2))^2.
\end{equation*}
\end{lemma}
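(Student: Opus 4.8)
The plan is to view $\hat{\Nbf}_M$ as a normalized sum over the consecutive observation pairs and to apply a bounded-differences concentration inequality, adapted to the Markovian dependence of the observations. Writing $V_s := (\varphi_a(Y_s)\varphi_b(Y_{s+1}))_{a,b \in [M]} \in \Rbb^{M \times M}$, stationarity under \textbf{[HX]} gives $\Nbf_M = \Ebb[V_s]$ for every $s$, so that $\Nbf_M - \hat{\Nbf}_M = \frac{1}{n}\sum_{s=1}^n (\Ebb[V_s] - V_s)$ and $F := \|\Nbf_M - \hat{\Nbf}_M\|_F$ is a measurable function of $(Y_1, \dots, Y_{n+1})$.

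First I would record the bounded-differences property. The quantity $\eta_2(\Phi_M)$ is exactly the diameter, for $\|\cdot\|_F$ on $\Rbb^{M \times M}$, of the range of the map $(y_1, y_2) \mapsto (\varphi_a(y_1)\varphi_b(y_2))_{a,b}$. Replacing a single $Y_j$ by $Y_j'$ modifies at most the two matrices $V_{j-1}$ and $V_j$ (only one of them when $j \in \{1, n+1\}$); since each such modification changes only one coordinate of a pair, the definition of $\eta_2$ yields $\|V_{j-1} - V_{j-1}'\|_F \leq \eta_2(\Phi_M)$ and $\|V_j - V_j'\|_F \leq \eta_2(\Phi_M)$. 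By the triangle inequality for $\|\cdot\|_F$, the function $F$ then satisfies the bounded-differences condition with constants $c_j \leq 2\eta_2(\Phi_M)/n$, whence $\sum_j c_j^2 \leq 4\eta_2(\Phi_M)^2/n$.

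Next I would control the mean. By Jensen, $\Ebb[F] \leq (\Ebb[F^2])^{1/2}$, and $\Ebb[F^2] = \frac{1}{n^2}\sum_{s,s'} \Ebb\langle V_s - \Ebb V_s, V_{s'} - \Ebb V_{s'}\rangle_F$. The diagonal terms are handled via $\Ebb\|V_1 - \Ebb V_1\|_F^2 = \tfrac{1}{2}\Ebb\|V_1 - \tilde{V}_1\|_F^2 \leq \tfrac{1}{2}\eta_2(\Phi_M)^2$, where $\tilde{V}_1$ is an independent copy of $V_1$, because $V_1$ takes values in a set of diameter $\eta_2(\Phi_M)$. The off-diagonal covariances are summable because the hidden chain is finite and irreducible under \textbf{[HX]}, hence uniformly (geometrically) ergodic: bounding $\|V_s - \Ebb V_s\|_F \leq \eta_2(\Phi_M)$ and using geometric mixing, $|\Ebb\langle V_s - \Ebb V_s, V_{s'} - \Ebb V_{s'}\rangle_F|$ decays like $\eta_2(\Phi_M)^2 \rho^{|s-s'|}$ at a rate $\rho$ depending only on $\Qbf^*$. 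Summing over $s,s'$ gives $\Ebb[F^2] \leq \Ccal_1^2 \eta_2(\Phi_M)^2/n$ with $\Ccal_1 \equiv \Ccal_1(\Qbf^*)$, i.e. $\Ebb[F] \leq \Ccal_1 \eta_2(\Phi_M)/\sqrt{n}$, which produces the ``$1$'' in the factor $1+u$.

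Finally I would apply a McDiarmid-type inequality for functionals of a uniformly ergodic Markov chain. The trajectory $(Y_1,\dots,Y_{n+1})$ is a coordinatewise function of the finite-state ergodic chain $(X_t,Y_t)_t$, and since $F$ depends on the $Y$'s only, its bounded-difference constants with respect to a single coordinate of this joint chain remain $\leq 2\eta_2(\Phi_M)/n$. The corresponding concentration inequality then holds with a variance proxy $\Ccal_2(\Qbf^*)\sum_j c_j^2$, giving $\Pbb(F \geq \Ebb[F] + t) \leq \exp\!\big(-n t^2/(\Ccal_3(\Qbf^*)\eta_2(\Phi_M)^2)\big)$; taking $t = \Ccal_3^{1/2}\eta_2(\Phi_M)\,u/\sqrt{n}$ and $\Ccal_* := \Ccal_1 \vee \Ccal_3^{1/2}$ yields the stated deviation $\Ccal_* \eta_2(\Phi_M)(1+u)/\sqrt{n}$ with probability at most $e^{-u^2}$. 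The main obstacle is precisely the dependence of the observations: the independent-data version of McDiarmid is unavailable, so the crux is to invoke (and quantify) a bounded-differences concentration inequality for functionals of the geometrically ergodic hidden chain, and to verify that both the variance of the empirical sum and the concentration constant depend on $\Qbf^*$ only through its mixing rate, so that $\Ccal_*$ indeed depends on $\Qbf^*$ alone.
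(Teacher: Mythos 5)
Your proof is correct and takes essentially the same route as the paper's: the paper does not reprove this lemma but imports it from Appendix E of \cite{dCGLLC15}, whose argument is precisely your combination of a bounded-differences (McDiarmid-type) concentration inequality for functionals of a uniformly ergodic hidden Markov chain (in the spirit of \cite{Pau13}) with a bound on $\Ebb\|\Nbf_M-\hat{\Nbf}_M\|_F$ obtained from the geometric decay of the covariances, the diameter interpretation of $\eta_2(\Phi_M)$ giving the bounded-difference constants $2\eta_2(\Phi_M)/n$ exactly as you compute. Only note that the uniform ergodicity you invoke rests on the full strength of \textbf{[HX]} (ergodicity, i.e.\ irreducibility \emph{and} aperiodicity of the finite hidden chain), not on irreducibility alone.
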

In particular, taking $u = \sqrt{2 \log(n)}$ and assuming $u > 1$ and $n \geq 2$, one has with probability $1 - n^{-2}$ that
\begin{equation*}
\sigma_1(\Nbf_M - \hat{\Nbf}_M) \leq C \sqrt{\frac{\log(n)}{n}}
\end{equation*}
for all $C \geq C_0 := 2 \sqrt{2} \eta_2(\Phi_M) \Ccal_*$, using that for any matrix $A$, one has $\sigma_1(A) \leq \| A \|_F$.

Let $C \geq C_0$. We will need Weyl's inequality (a proof may be found in \cite{stewart1990matrix} for instance):
\begin{lemma}[Weyl's inequality]
Let $A,B$ be $p \times q$ matrices with $p \geq q$, then for all $i = 1 , \dots, q$,
\begin{equation*}
| \sigma_i(A+B) - \sigma_i(A) | \leq \sigma_1(B).
\end{equation*}
\end{lemma}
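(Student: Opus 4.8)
The plan is to prove Weyl's inequality for singular values via the min--max (Courant--Fischer) variational characterization, combined with the triangle inequality for the operator norm. Since $p \geq q$, each $p \times q$ matrix $M$ has exactly $q$ singular values $\sigma_1(M) \geq \dots \geq \sigma_q(M) \geq 0$, namely the square roots of the eigenvalues of the symmetric positive semidefinite matrix $M^\top M$. The first step is to recall that $\sigma_1(B) = \max_{\|x\| = 1} \|Bx\|$ is the operator norm of $B$, so that $\|Bx\| \leq \sigma_1(B) \|x\|$ for every $x \in \Rbb^q$.

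Next I would invoke the variational formula
\[
\sigma_i(M) = \max_{\substack{V \subseteq \Rbb^q \\ \dim V = i}} \; \min_{\substack{x \in V \\ \|x\| = 1}} \|Mx\|,
\]
which follows by applying the Courant--Fischer theorem to the symmetric matrix $M^\top M$ together with the identity $\|Mx\|^2 = x^\top M^\top M x$. For any $i$-dimensional subspace $V$ and any unit vector $x \in V$, the triangle inequality gives $\|(A+B)x\| \geq \|Ax\| - \|Bx\| \geq \|Ax\| - \sigma_1(B)$. Taking the minimum over unit vectors $x \in V$ and then the maximum over all $i$-dimensional subspaces $V$ yields $\sigma_i(A+B) \geq \sigma_i(A) - \sigma_1(B)$.

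To obtain the reverse bound I would apply the inequality just proved with $A$ replaced by $A+B$ and $B$ replaced by $-B$: writing $A = (A+B) + (-B)$ and using $\sigma_1(-B) = \sigma_1(B)$ gives $\sigma_i(A) \geq \sigma_i(A+B) - \sigma_1(B)$. Combining the two bounds gives $-\sigma_1(B) \leq \sigma_i(A+B) - \sigma_i(A) \leq \sigma_1(B)$, which is exactly the claimed inequality.

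The argument is essentially routine once the variational characterization is in hand; the only point requiring genuine care is the correct statement and application of the min--max formula, in particular the indexing convention (here $\sigma_i$ denotes the $i$-th largest singular value, so the outer optimization is a maximum over subspaces of dimension $i$, and one must check the edge cases $i = 1$ and $i = q$). No compactness or invertibility hypotheses are needed, and the same proof adapts verbatim to the complex Hermitian setting.
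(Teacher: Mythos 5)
Your proof is correct: the min--max characterization $\sigma_i(M) = \max_{\dim V = i} \min_{x \in V, \|x\|=1} \|Mx\|$ combined with the triangle inequality gives $\sigma_i(A+B) \geq \sigma_i(A) - \sigma_1(B)$, and the substitution $A \mapsto A+B$, $B \mapsto -B$ (using $\sigma_1(-B)=\sigma_1(B)$) yields the reverse bound, with the indexing and edge cases handled properly. The paper itself gives no proof, deferring to \cite{stewart1990matrix}, and your argument is essentially the standard one found there, so there is nothing to reconcile.
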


Using this inequality, one gets that with probability at least $1 - n^{-2}$, for all $1 \leq i \leq K^*$, $\sigma_i(\hat{\Nbf}_M) > \sigma_{K^*}(\Nbf_M) - C \sqrt{\log(n)/n}$ and for all $i > K^*$, $\sigma_i(\hat{\Nbf}_M) < C \sqrt{\log(n)/n}$.

In particular, if $2 C \sqrt{\log(n)/n} < \sigma_{K^*}(\Nbf_M)$, then with probability at least $1 - n^{-2}$, the order is exactly the number of singular values of $\hat{\Nbf}_M$ which are larger than $C \sqrt{\log(n)/n}$. Finally, observe that under the condition $n \geq n_0 C^2 (1 + \log(C))$,
\begin{align*}
C \sqrt{\frac{\log(n)}{n}} \leq& \sqrt{\frac{2 \log(C) + \log(1 + \log(C))}{n_0 (1 + \log(C))}} \\
	\leq& \sqrt{\frac{3}{n_0}} \sqrt{\frac{\log(C)}{1 + \log(C)}},
\end{align*}
since one can assume without loss of generality that $C_0 \geq 1$. By taking $n_0 = 12/ \sigma_{K^*}(\Nbf_M)^2$, this concludes the proof.
\end{proof}

\section{Numerical experiments}
\label{sec_numerical_experiments}

In this section, we show the results of our estimators on simulated data. The simulation parameters are introduced in Section \ref{sec_simulation_parameters}. We show the numerical results and discuss their ability to select the right order in practice in Section \ref{sec_simulation_results}, and we present the data-driven methods and heuristics  we used for the numerical implementation in Section \ref{sec_simulation_heuristics}.

\subsection{Simulation parameters}
\label{sec_simulation_parameters}

We will consider $\Ycal = [0,1]$ with $\mu$ being the Lebesgue measure. We will use a trigonometric basis on $\Lbf^2([0,1])$ to generate the approximation spaces $(\Pfrak_M)_M$. More precisely, define
\begin{align*}
\varphi_0(t) &= 1 \\
\varphi_a(t) &= \sqrt{2} \cos(\pi a t) 
\end{align*}
for all $t \in [0,1]$ and $a \in \Nbb^*$. We take $\Pfrak_M = \Span(\{ \varphi_a \; | \; 0 \leq a < M \})$ the spaces induced by the trigonometric basis.

\begin{remark}
Taking the same vectors in all bases is not mandatory to ensure theoretical consistency, but in practice it allows us to take an additional initial point for the minimization step and improves the stability of the algorithm (see Step 1 below).

%\item[--] Our theoretical study dealed with nested sequences of spaces, but in practice it is possible to take non-nested sequences such as piecewise constant functions on the regular partition of size $M$.
\end{remark}

We will assume $\fbf^*$ to be linearly independent, so that one only needs $L = 3$ observations to recover the parameters of the HMM.

In order to assess the performances of the different procedures, we generate $n$ observations of a HMM of order 3 for several values of $n$, using the following parameters:
\begin{itemize}
\item Emission distributions: Beta distributions with two possible sets of parameters: [$(1.5; 5)$, $(7; 2)$ and $(6; 6)$] or [$(2; 5)$, $(4; 2)$ and $(4; 4)$];
\item Markov chain parameters:
\begin{align*}
\Qbf^* &=
\left(\begin{array}{c c c}
0.8 & 0.1 & 0.1\\
0.2 & 0.7 & 0.1\\
0.07 & 0.13 & 0.8
\end{array}\right), \\
\pi^* &= (\frac{47}{120} \quad \frac{11}{40} \quad \frac{1}{3}) \\
	&\approx (0.3917 \quad 0.2750 \quad 0.3333).
\end{align*}
\end{itemize}

Finally, we take $M_{\text{max}} = 50$ and $K_{\text{max}} = 5$ the maximum values of $M$ and $K$ for which we will compute the estimators.

The simulation codes are available in MATLAB at \url{https://www.normalesup.org/~llehericy/HMM_order_simfiles/}.

\subsection{Numerical results}
\label{sec_simulation_results}

\begin{figure}[!h]
\centering
\begin{subfigure}[b]{0.49\textwidth}
\centering
\begin{tabular}{|c|c|c|}
\hline
$n$&$\Pbb(\hat{K}_{\text{l.s.}} = K^*)$&$\Pbb(\hat{K}_{\text{sp.}} = K^*)$ \\ \hline \hline
$999 $   &0.2 &0 \\ \hline
$3\,000 $&1   &0 \\ \hline
$9\,999$ &1   &1 \\ \hline
$19\,998$&1   &1 \\ \hline
\end{tabular}
\caption{Beta parameters $(1.5; 5)$, $(7; 2)$ and $(6; 6)$.}
\end{subfigure}
\begin{subfigure}[b]{0.49\textwidth}
\centering
\begin{tabular}{|c|c|c|}
\hline
$n$&$\Pbb(\hat{K}_{\text{l.s.}} = K^*)$&$\Pbb(\hat{K}_{\text{sp.}} = K^*)$ \\ \hline \hline
$7\,500 $&0.3 &0 \\ \hline
%$9\,999 $&0.3 &0 \\ \hline
$19\,998$&0.9 &0 \\ \hline
$30\,000$&1   &0 \\ \hline
$49\,998$&1   &0.1 \\ \hline
\end{tabular}
\caption{Beta parameters $(2; 5)$, $(4; 2)$ and $(4; 4)$}
\end{subfigure}
\caption{Probability to select the right order for the two methods ($\hat{K}_{\text{l.s.}}$ for the least squares method and $\hat{K}_{\text{sp.}}$ for the spectral method). 10 simulations have been done for each $n$. Parameters for spectral selection are $M=40$, $M_\text{reg}=35$ and $\tau = 1.5$ (see Section \ref{sec_spectral_heuristics} for the definition of these parameters).}
\label{fig_synth}
\end{figure}

Figure \ref{fig_synth} summarizes the results of both procedures. Both select the right order as soon as the number of observations is sufficient.

The spectral method is easily put in pratice and runs extremely fast. It doesn't need a time-consuming contrast minimization step or an initial point. However, the thresholding of the singular values is a delicate issue, and if the order is incorrect, then the theoretical results about the spectral estimators of the parameters don't hold and this method may behave poorly.

The performances of the least squares method are much better (see Figure \ref{fig_synth} for comparing the order estimators and \cite{dCGL15} for comparing the emission densities estimators). In addition, the model selection step is easy to handle and gives an estimator of the order that we proved to be consistent, estimators of the HMM parameters that we proved to be quasi-rate minimax and a way to check whether the model fits the data well (see Section \ref{sec_least_squares_heuristics}), all at the same time. However, the minimization of the (non-convex) empirical contrast is a time-consuming step, especially for large samples and large models.

Choosing the right method is thus a question of computational power and amount of available data. For small datasets where one wants to get accurate results, the least squares method is best. Conversely, on large datasets and large models, the spectral method is a good choice in order to obtain many estimators in a reasonable amount of time.

\subsection{Practical implementation}
\label{sec_simulation_heuristics}

\subsubsection{Least squares method}
\label{sec_least_squares_heuristics}

The first issue that one encounters when trying to minimize the least squares criterion $\gamma_n$ is that it is not convex. Several algorithms have been proposed to overcome this difficulty. We chose to use CMA-ES (for Covariance Matrix Adaptation Estimation Strategy, see \cite{Han06}) in order to find a minimizer. This estimator is easy to use and works well in many situations, but\---like all approximate minimization algorithms\---it requires a good initial point since it might otherwise remain stuck in local minima.

One part of our method consists in using previous estimates as initial points for further steps to counter this phenomenon, since it is likely that this way the estimators stay near the real minimizer. Our practical algorithm is the following:
\begin{enumerate}
\item Minimize $\gamma_n$ on each model, for $M \leq M_{\text{max}}$ and $K \leq K_{\text{max}}$. We take several initial points for model $(K,M)$ according to the following cases:
	\begin{itemize}
	\item $K = 1$. Use a HMM with a single state and a uniform emission distribution.
	\item $K > 1$. Take the estimator from model $(K-1, M)$. For each hidden state of the corresponding HMM, use the model where this state is duplicated. More precisely, the Markov chain $\tilde{X}$ where state $I$ is duplicated is obtained by replacing the state $I$ from chain $X$ with two states $I_1$ and $I_2$ such that for each state $S \neq I_1, I_2$,
	\begin{align*}
		\Pbb(\tilde{X}_{t+1} = I_1 \; | \; \tilde{X}_t = S) &= 
		\frac{1}{2} \Pbb(X_{t+1} = I \; | \; X_t = S) \\&= 
		\Pbb(\tilde{X}_{t+1} = I_2 \; | \; \tilde{X}_t = S) \\
		\Pbb(\tilde{X}_{t+1} = S \; | \; \tilde{X}_t = I_1) &= 
		\Pbb(X_{t+1} = S \; | \; X_t = I) \\&= 
		\Pbb(\tilde{X}_{t+1} = S \; | \; \tilde{X}_t = I_2)
	\end{align*}
	and
	\begin{align*}
	\Pbb(\tilde{X}_{t+1} = I_2 \; | \; \tilde{X}_t = I_1) &= 
		\frac{1}{2} \Pbb(X_{t+1} = I \; | \; X_t = I) \\&= 
		\Pbb(\tilde{X}_{t+1} = I_1 \; | \; \tilde{X}_t = I_2)
	\end{align*}
	\item $M > 1$. Use estimator from model $(K, M-1)$ with the $M$-th coordinate of each emission density set to zero. This is only interesting if all $\Pfrak_M$ are spanned by the first $M$ vectors of a given orthonormal basis, like for trigonometric spaces.
	\end{itemize}
	Then, after minimization from each one of these initial points, take the estimator that minimizes $\gamma_n$.
\item Tune the parameter $\rho$ of the penalty with the slope heuristics or the dimension jump method (see below) and select $\hat{M}$ and $\hat{K}$.
\item Return the estimator for $M = \hat{M}$ and $K = \hat{K}$.
\end{enumerate}

This iterative initialization procedure relies on the heuristics that when the order is underestimated, then several states are "merged" together. Duplicating a merged state will allow to separate them effectively while still taking advantage of the computations done up to now. It is meant to avoid having to recalculate all states at the same time (which could get us stuck in sub-optimal local minima) when the best solution is likely to be a small modification of the previous estimator. In addition, when the order is overestimated, it allows to make sure the empirical criterion is indeed decreasing with the dimension of the model by giving an estimator that performs at least as well as those from smaller models. This makes our method robust to an overestimation of the order.

The last practical issue is a very common one in the model selection setting: the constant $\rho$ of the penalty is unknown and has to be estimated before one can select the right model. Several data-driven estimators have been proposed to circumvent this difficulty, for instance dimension jump heuristics, slope heuristics, bootstrap or cross validation. We focus on the first two, which have several advantages in our setting. First, they are easy to use, are proved to be theoretically valid in many settings and work well in a wide range of applications (see for instance \cite{BMM12a} and references therein). Secondly, they take advantage of the structure of our problem and both give a qualitative way to check whether the choice of penalty is valid or not, and by extension whether the model is misspecified or not.

\paragraph{Dimension jump heuristics}

In this paragraph, we study the selected parameters
\begin{equation*}
\rho \longmapsto (\hat{M}(\rho), \hat{K}(\rho)) \in \argmin \{ \gamma_{n}(\hat{g}_{K,M}) + \rho \, \pen_{\text{shape}}(n,M,K) \}
\end{equation*}
and the selected complexity
\begin{equation*}
\rho \longmapsto \text{Comp}(\rho) = \hat{M}(\rho) \hat{K}(\rho) + \hat{K}(\rho))[\hat{K}(\rho)) - 1]
\end{equation*}
with $\pen_{\text{shape}}(n,M,K) = (MK+K^2-1)\log(n)/n$.

Assume that there exists $\kappa$ such that $\kappa \, \pen_{\text{shape}}$ is a \emph{minimal penalty}, that is a penalty such that as $n$ tends to infinity, for all $\rho > \kappa$, the size of the model chosen for penalty $\rho \, \pen_{\text{shape}}$ remains small in some sense and for all $\rho < \kappa$, the size of the model becomes huge. Then, for $n$ large enough, this will appear on the graph of the selected model complexity as a ``dimension jump'': around some constant $\rho_\text{jump}$, the complexity will abruptly drop from large models to small models. This is clearly the case in Figure \ref{fig_dimJump_complexity}. Figure \ref{fig_dimJump_KM} shows the behaviour of $\hat{M}$ and $\hat{K}$ with $\rho$. A dimension jump also occurs with these functions. It is most visible for $\hat{M}$.

Finally, once the dimension jump location $\rho_{\text{jump}}$ has been estimated, we take $\hat{\rho} = 2 \rho_{\text{jump}}$ to select the final parameters.

It is worth noting that this jump method also gives a qualitative way to check whether the choice of parameters is sensible: if no clear jump can be identified, then either one didn't consider enough models to make the jump clear, or the penalty isn't the right one, or the model cannot approximate the data distribution well.

\begin{figure}[!h]
\centering
\begin{subfigure}[b]{0.49\textwidth}
\centering
\includegraphics[scale=0.8]{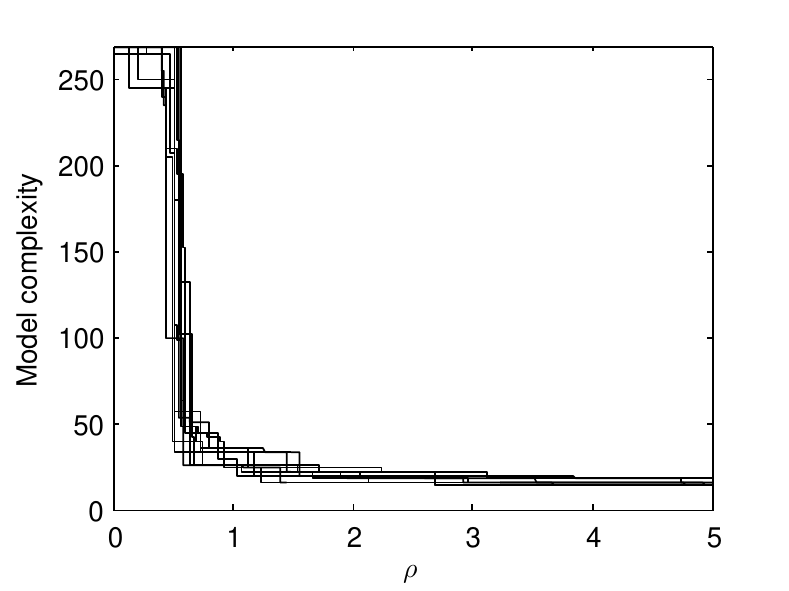}
\caption{$n = 7,500$.}
\end{subfigure}
\begin{subfigure}[b]{0.49\textwidth}
\centering
\includegraphics[scale=0.8]{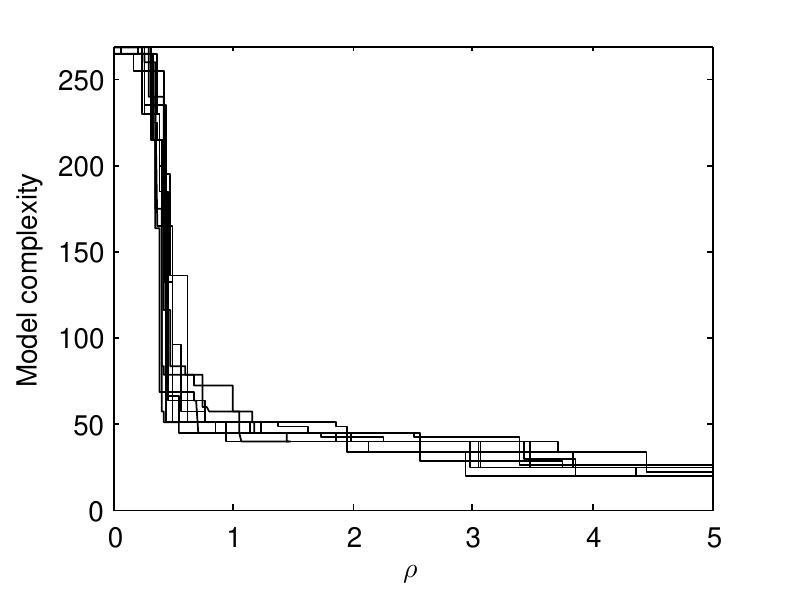}
\caption{$n = 49,998$.}
\end{subfigure}
\caption{Graph of $\rho \mapsto \text{Comp}(\rho)$ for 10 sets of $n$ consecutive observations. Here, the parameters of the Beta distribution are $(2; 5)$, $(4; 2)$ and $(4; 4)$.}
\label{fig_dimJump_complexity}
\end{figure}

\begin{figure}[!h]
\centering
\begin{subfigure}[b]{0.49\textwidth}
\centering
\includegraphics[scale=0.8]{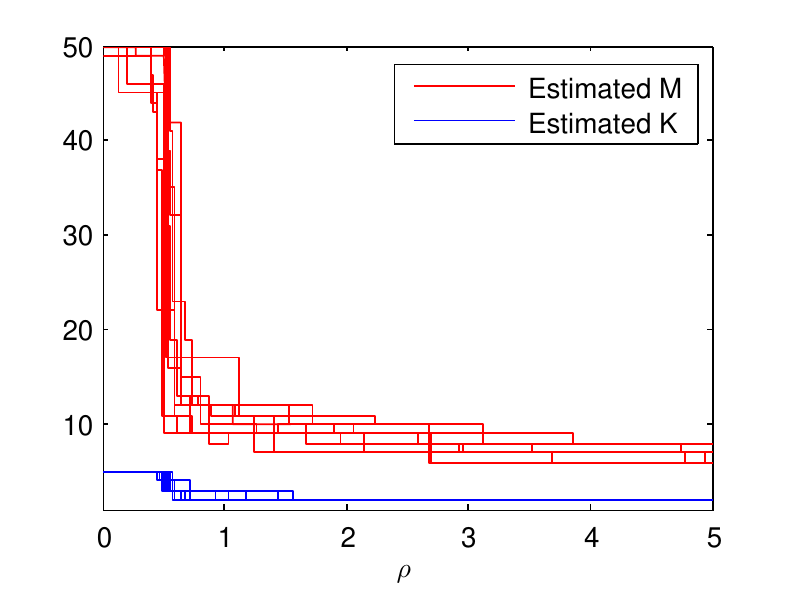}
\caption{$n = 7,500$.}
\end{subfigure}
\begin{subfigure}[b]{0.49\textwidth}
\centering
\includegraphics[scale=0.8]{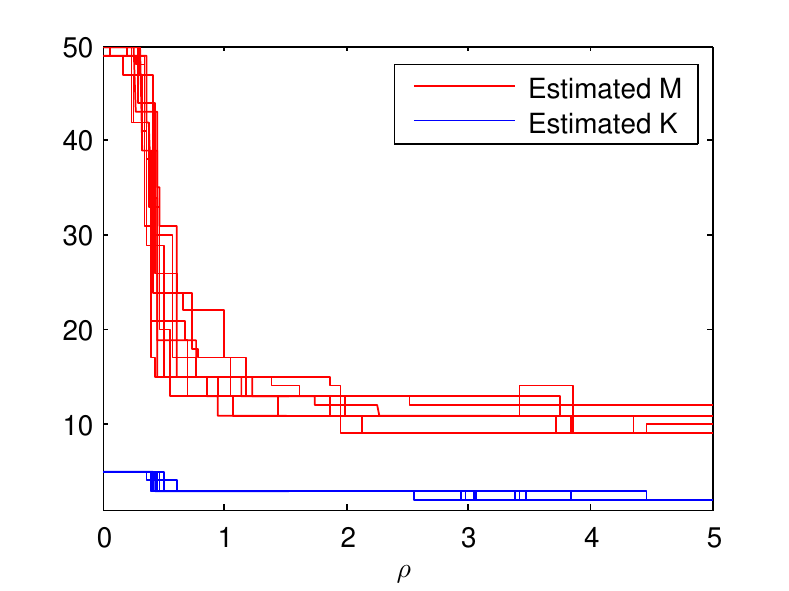}
\caption{$n = 49,998$.}
\end{subfigure}
\caption{Graph of $\rho \mapsto \hat{M}(\rho)$ and $\rho \mapsto \hat{K}(\rho)$ for 10 sets of $n$ consecutive observations. Here, the parameters of the Beta distribution are $(2; 5)$, $(4; 2)$ and $(4; 4)$.}
\label{fig_dimJump_KM}
\end{figure}

\paragraph{Slope heuristics}

This heuristics relies on the fact that when $\pen_{\text{shape}}$ is a minimal penalty, then the empirical contrast function is expected to behave like $\rho_\text{min} \pen_{\text{shape}}$ for large models and for some constant $\rho_\text{min}$. This gives both a way to calibrate the constant of the penalty and to check if the chosen penalty has the right shape (see \cite{BMM12a}). The final penalty is then taken as $2 \hat{\rho}_\text{min} \pen_{\text{shape}}$.

Figure \ref{slope_N49998} shows the graph of the empirical contrast depending on $\pen_\text{shape}$. The slope heuristics works well in this situation, suggesting that our penalty has the right shape.

\begin{figure}[!h]
\centering
\includegraphics[scale=1]{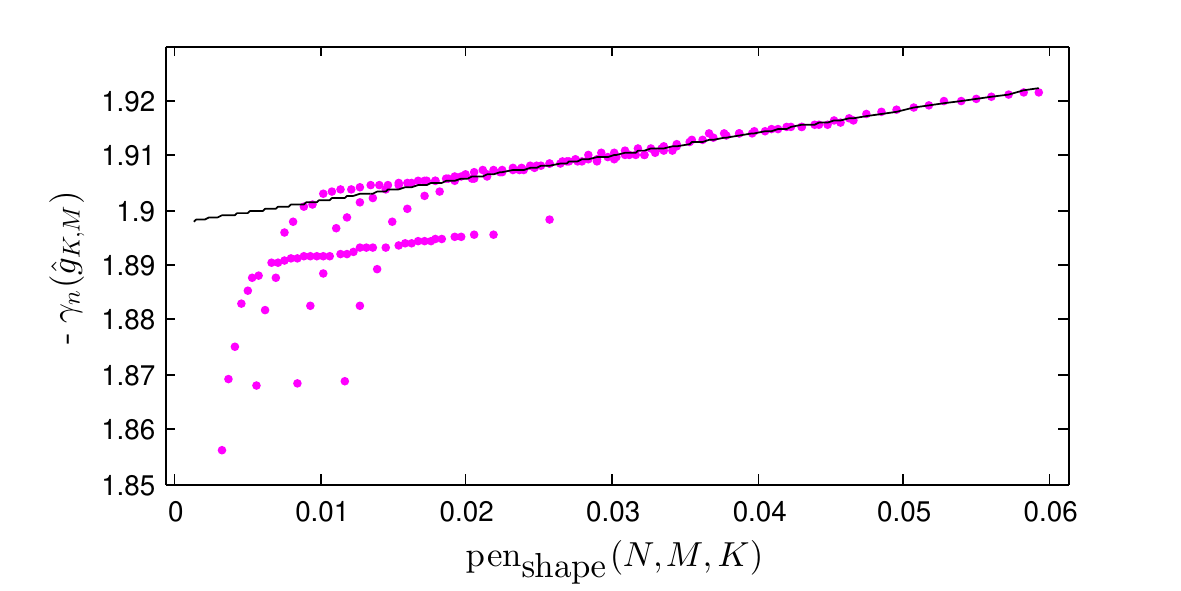}
\caption{Empirical constrast and calibrated penalty for $n=49,998$. Here, the parameters of the Beta distribution are $(2; 5)$, $(4; 2)$ and $(4; 4)$.}
\label{slope_N49998}
\end{figure}

\subsubsection{Spectral method}
\label{sec_spectral_heuristics}

The idea of the spectral order estimation is to recover the rank of the matrix $\Nbf_M$. However, this is not always possible: if one singular value of $\Nbf_M$ is smaller than the noise (which is the case when $\Obf_M$ is close from not being invertible, i.e. when the emission densities are close from being linearly dependent, and when there are only few observations), then this method will not be able to ``see'' the corresponding hidden state.

Figure \ref{fig_spectrum_N}\---and in particular Figure \ref{fig_spectrum_N_19998A}\---illustrates this problem: the third singular value is smaller than several noisy singular values, which means it won't be possible to recover it. Even if one knows the right order, the fact that the singular value is smaller than the noise can make it impossible for spectral methods to recover the true parameters. Figure \ref{fig_estimators_N19998} shows the result when trying to estimate the densities in the situation of Figure \ref{fig_spectrum_N_19998A}: when the singular value is drowned by the noise, the output of the spectral estimator is aberrant. Notice that it is not a fatality: in the same situation, the least squares method manages to give sensible estimators of the emission densities. This is an intrinsic limitation of the spectral method.

\begin{figure}[!h]
\centering
\begin{subfigure}[b]{0.49\textwidth}
\centering
\includegraphics[scale=0.8]{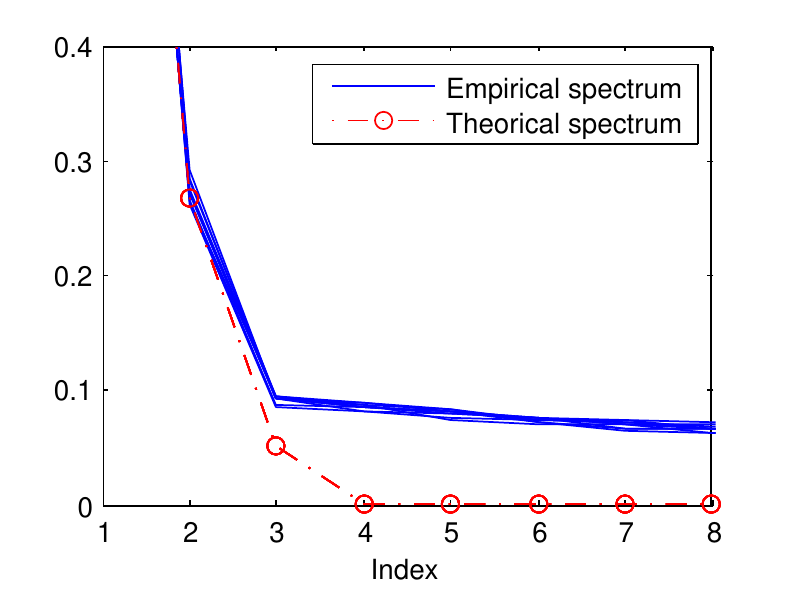}
\caption{$n = 19,998$, Beta parameters $(2; 5)$, $(4; 2)$ and $(4; 4)$.}
\label{fig_spectrum_N_19998A}
\end{subfigure}
\begin{subfigure}[b]{0.49\textwidth}
\centering
\includegraphics[scale=0.8]{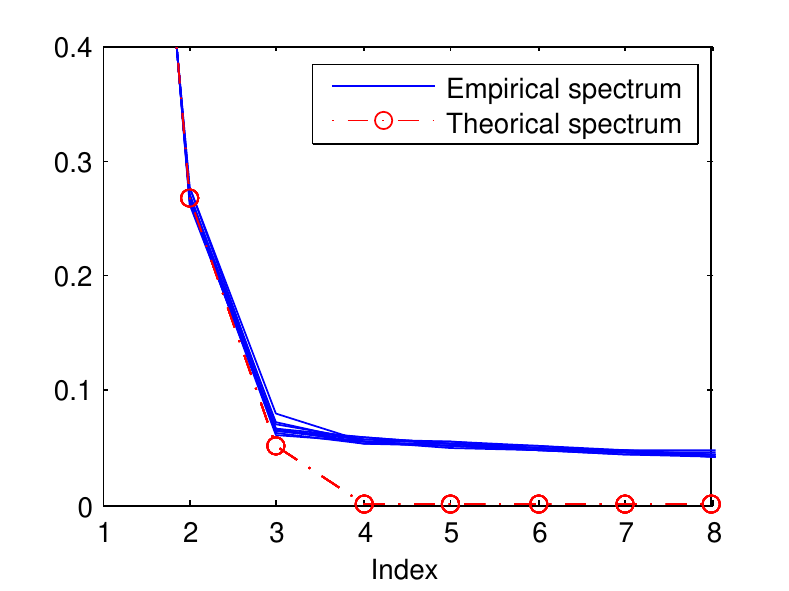}
\caption{$n = 49,998$, Beta parameters $(2; 5)$, $(4; 2)$ and $(4; 4)$.}
\end{subfigure}

\begin{subfigure}[b]{0.49\textwidth}
\centering
\includegraphics[scale=0.8]{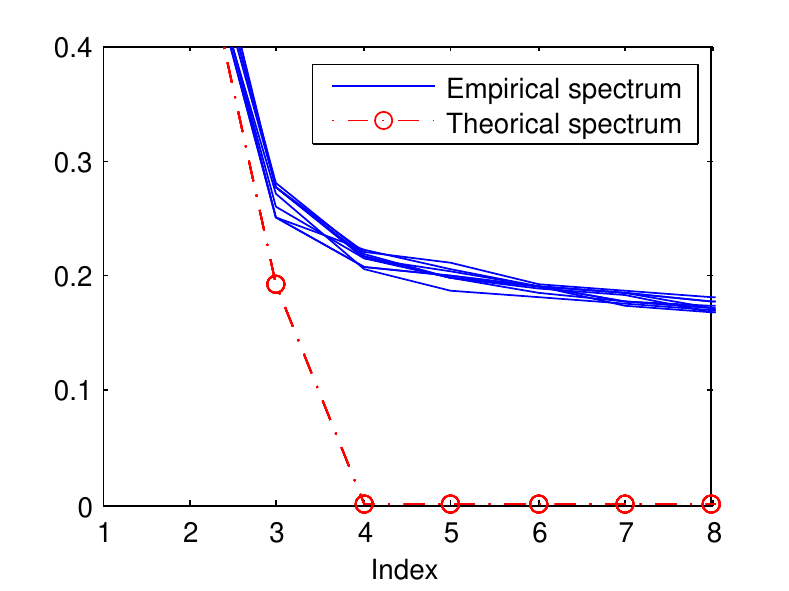}
\caption{$n = 3,000$, Beta parameters $(1.5; 5)$, $(7; 2)$ and $(6; 6)$.}
\end{subfigure}
\begin{subfigure}[b]{0.49\textwidth}
\centering
\includegraphics[scale=0.8]{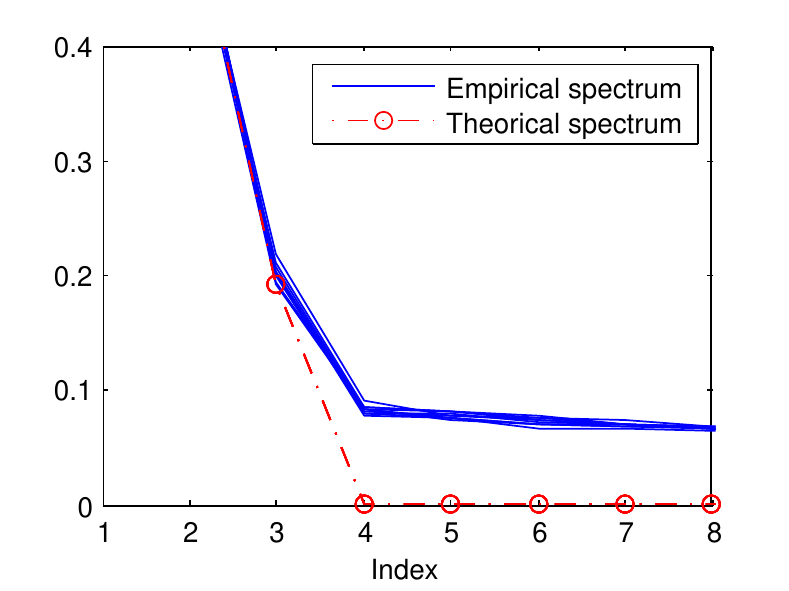}
\caption{$n = 19,998$, Beta parameters $(1.5; 5)$, $(7; 2)$ and $(6; 6)$.}
\label{fig_spectrum_N_19998B}
\end{subfigure}
\caption{Spectrum of the empirical matrix $\hat{\Nbf}_M$ and the theoretical matrix $\Nbf_M$ for $M=40$ and 10 simulations. The first singular values are too large to appear here.}
\label{fig_spectrum_N}
\end{figure}

\begin{figure}[!h]
\centering
\includegraphics[scale=0.8]{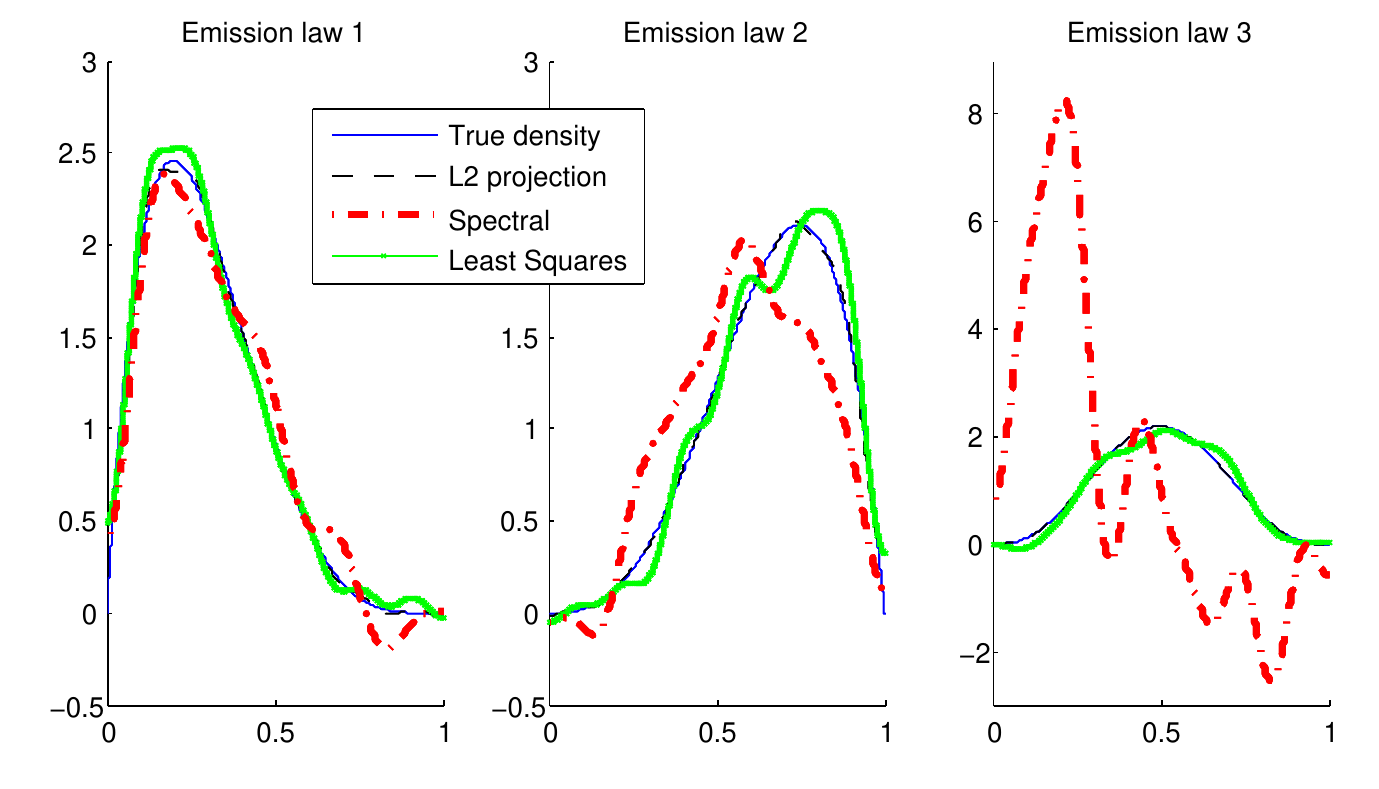}
\caption{Estimators of the emission densities for $n=19,998$ and Beta parameters $(2; 5)$, $(4; 2)$ and $(4; 4)$. We took $K = \hat{K}_\text{l.s.} = 3$ and $M = \hat{M} = 13$. The bad behaviour of the spectral algorithm when the emission densities are poorly separated is clearly visible on the third emission distribution.}
\label{fig_estimators_N19998}
\end{figure}
%\begin{figure}[!h]
%\centering
%\begin{subfigure}[b]{\textwidth}
%\centering
%\includegraphics[scale=0.8]{Estimators_N19998_withSpectral_dashed.pdf}
%\caption{Case \textbf{A}. We took $K = \hat{K}_\text{l.s.} = 3$ and $M = \hat{M} = 13$. The bad behaviour of the spectral algorithm when the emission densities are poorly separated is clearly visible on the third emission distribution.}
%\end{subfigure}
%\begin{subfigure}[b]{\textwidth}
%\centering
%\includegraphics[scale=0.8]{Estimators_N19998(1)_bienSep_withSpectral_dashed.pdf}
%\caption{Case \textbf{B}. We took $K = \hat{K}_\text{l.s.} = 3$ and $M = \hat{M} = 22$.}
%\end{subfigure}
%\caption{Estimators of the emission densities for $n=19,998$.}
%\label{fig_estimators_N19998_correct}
%\end{figure}

\begin{figure}[!h]
\centering
\includegraphics[scale=0.8]{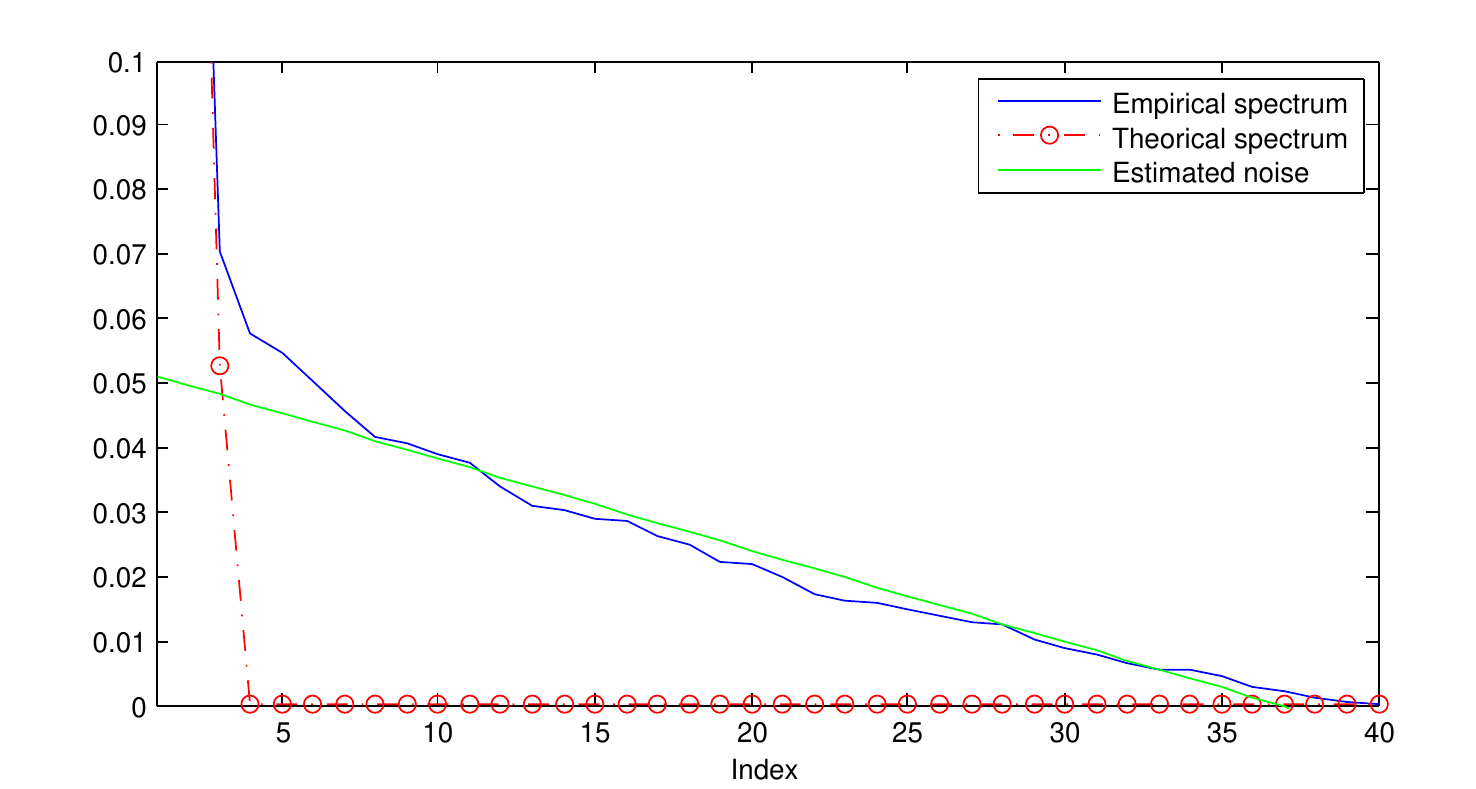}
\caption{Spectrum of $\Nbf_M$ for $M=40$ and $n=49,998$ for Beta parameters $(2; 5)$, $(4; 2)$ and $(4; 4)$. The regression (green line) has been performed on the 35 smallest singular values. The two largest singular values are too large to appear here.}
\label{fig_spectrum_regression}
\end{figure}

Therefore, what we need is a way to threshold the parameters in order to distinguish noise from significant singular values. The estimator $\hat{K}_\text{sp.}(C)$ is one way to achieve this, but the calibration of $C$ is a tricky problem, since the right choice of $C$ depends on the parameters of the HMM. We will use a different method, which relies on the same idea: identifying the noisy singular values which stand out from the others and saying they correspond to nonzero singular values of $\Nbf_M$. Our heuristics relies on the fact that when one sorts the singular values in decreasing order, then the smallest ones approximately follow an affine relation with respect to their index. This tendency is shown in Figure \ref{fig_spectrum_regression}.

We proceed as follows. Let $M$ and $M_\text{reg}$ be two positive integers such that ${M_\text{reg} \leq M \leq M_\text{max}}$. We estimate the affine dependance of the singular values of $\hat{\Nbf}_M$ with respect to their index with a linear regression using its $M_{\text{reg}}$ smallest singular values. Then, we set a thresholding parameter $\tau > 1$. We say a singular value is \emph{significant} if it is above $\tau$ times the value that the regression predicts for it. Lastly, we take $\hat{K}_\text{sp.}$ as the number of consecutive significant singular values starting from the largest one. This heuristics seems to work as soon as $\tau$ is large enough, e.g. $\tau = 1.5$.

\section{Proofs}
\label{sec_proofs}

\subsection{Main technical result}

The following lemma is the main technical result of this paper. It is the key for both the strong consistency and the oracle inequalities. It allows to control the difference between the empirical criterion $\gamma_n$ and the theoretical $\Lbf^2$ loss for all models at the same time.\\

Define $\nu : t \longmapsto \frac{1}{n} \sum_{s=1}^n t(Z_s) - \int t g^*$, so that
\begin{equation}
\label{eq_def_nu}
\forall t \in \Lbf^2(\Ycal^L, \mu^{\otimes L}), \quad
	\gamma_n(t) + \| g^* \|_2^2 = \| t - g^* \|_2^2 - 2 \nu(t)
\end{equation}

Let
\begin{multline}
\label{eq_def_Z}
s = (s_{K,M})_{K, \, M} \in \Sbf := \prod_{K \in \Nbb^*, \, M \in \Mcal} \left(\bigcup_{K} S_K \right) \\
\longmapsto (Z_{K,M}(s))_{K, M} \\
:= \left( \underset{t \in S_{K,M}}{\sup} \left[
	\frac{| \nu (t - s_{K,M}) |}{\| t - s_{K,M} \|_2^2 + x_{K,M}^2}
\right] \right)_{K, M}
\end{multline}

\begin{remark}
It is not necessary to assume that $s_{K,M} \in S_{K,M}$. In particular, one can take $s_{K,M} = g^*$ for all $K, M$. In that case, we will simply write $Z_{K,M}(g^*)$.
\end{remark}

\begin{lemma}
\label{lemma_control_of_Z}
Assume \textbf{[HX]} and \textbf{[HF]} hold. Then there exists a sequence $(x_{K,M})_{K, M} \equiv (x_{K,M})_{K, M}(\cquad, \cinf, \Qbf^*, L)$ and positive constants $(n_0, \rho, A) \equiv (n_0, \rho, A)(\cquad, \cinf, \Qbf^*, L)$ such that if the penalty $\pentilde$ satisfies
\begin{equation*}
\forall n, \; \forall M \leq n, \; \forall K \leq n \quad \pentilde(n,M,K) \geq \rho (MK + K^2 - 1) \frac{\log(n)}{n}
\end{equation*}
then for all $s \in \Sbf$, $n \geq n_0$ and $x > 0$, one has with probability larger than ${1 - e^{-x}}$:
\begin{equation*}
\begin{cases}
\displaystyle \underset{K' \leq n , \, M' \leq n}{\sup} \; Z_{K',M'}(s) \leq \frac{1}{4} \\
\displaystyle \underset{K' \leq n , \, M' \leq n}{\sup} \left(2 Z_{K',M'}(s) x_{K',M'}^2 - \pentilde(n,M',K') \right) \leq A \frac{x}{n}
\end{cases}
\end{equation*}
\end{lemma}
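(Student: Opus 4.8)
The plan is to reduce the uniform self-normalized control of the $Z_{K,M}(s)$ to a shell-by-shell (peeling) application of a Talagrand-type deviation inequality for the empirical process $\nu$, then to sum over shells and union bound over the at most $n^2$ models with $K, M \leq n$. Throughout, the sequence $(x_{K,M})$ is chosen so that $x_{K,M}^2$ is of the same order as the lower bound imposed on the penalty, namely $x_{K,M}^2 = \rho_0 (MK + K^2 - 1)\log(n)/n$ for a suitable $\rho_0 \equiv \rho_0(\cquad, \cinf, \Qbf^*, L)$.

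First I would record the two structural facts that make the process tractable. On one hand, since $\fbf^* \in \Fcal^{K^*}$ and every $f \in \Fcal$ satisfies $\|f\|_\infty \leq \cinf$, every $t \in S_{K,M}$ obeys $\|t\|_\infty \leq \cinf^L$, and because $g^*$ has a density bounded by $\cinf^L$ one gets the pointwise variance control $\Ebb\big[(t-s_{K,M})^2(Z_1)\big] = \int (t-s_{K,M})^2 g^* \leq \cinf^L \, \| t - s_{K,M}\|_2^2$. On the other hand, the hidden chain is a finite irreducible (hence geometrically ergodic) Markov chain, so the stationary sequence $(Z_s)_s$ of overlapping $L$-windows is geometrically $\beta$-mixing with a rate depending only on $\Qbf^*$ and $L$. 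Combining mixing with the pointwise bound yields $\mathrm{Var}(\nu(t-s_{K,M})) \leq C(\Qbf^*, \cinf, L)\,n^{-1}\,\|t - s_{K,M}\|_2^2$, since the cross-covariances are summable and each is dominated by the marginal second moment. This variance proxy, proportional to $\|t - s_{K,M}\|_2^2$, is exactly what the denominator of $Z_{K,M}$ normalizes against.

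Next I would control the complexity of each model. The map $(\Qbf, \fbf) \mapsto g^{\Qbf, \fbf}$ is polynomial and the parameters range over a bounded set ($\Qbf \in \Qcal_K$ and $\fbf \in (\Fcal \cap \Pfrak_M)^K$ with $\|\cdot\|_2 \leq \cquad$), so it is Lipschitz with a constant at most polynomial in the radii; hence $S_{K,M}$ admits, at scale $\delta$, a covering of log-cardinality at most $c\,(MK + K^2)\log(1/\delta)$, a parametric entropy of intrinsic dimension $D := MK + K^2 - 1$. With the variance and sup-norm bounds above, either Berbee's coupling applied to a blocking of the $\beta$-mixing sequence (reducing to independent blocks) or a direct Bernstein inequality for geometrically mixing sequences, combined with chaining, gives for every ball $\{t : \|t - s_{K,M}\|_2 \leq r\}$ and every $u > 0$, with probability at least $1 - e^{-u}$,
\[
\sup_{\|t - s_{K,M}\|_2 \leq r} |\nu(t - s_{K,M})|
\leq c_1\Big( r\sqrt{\tfrac{D}{n}} + r\sqrt{\tfrac{u}{n}} + \cinf^L\,\tfrac{D+u}{n}\Big),
\]
with $c_1 \equiv c_1(\cquad, \cinf, \Qbf^*, L)$.

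Finally I would run the peeling argument and the union bounds. Write the class as the union of $\mathcal{C}_0 = \{\|t-s_{K,M}\|_2 \leq x_{K,M}\}$ and $\mathcal{C}_j = \{2^{j-1}x_{K,M} \leq \|t - s_{K,M}\|_2 \leq 2^j x_{K,M}\}$ for $j \geq 1$, so that on $\mathcal{C}_j$ the denominator is at least $4^{j-1} x_{K,M}^2$. Applying the displayed bound on each shell with exponent $u_{K,M,j} := x + \rho_1 D\log(n) + j$ and dividing by the denominator, the deterministic term contributes $\lesssim 2^{-j}\sqrt{D/n}\,/\,x_{K,M} \asymp 2^{-j}/\sqrt{\log n}$, which sums to a quantity below $1/4$ once $n \geq n_0$; this is the first inequality. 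For the second, multiplying by $2 x_{K,M}^2$ turns the leading term into $\lesssim x_{K,M}^2\sqrt{D/n} \asymp D\sqrt{\log n}\,/\,n$, which is below $\rho\,D\log(n)/n$ and hence absorbed by $\pentilde$ for $\rho$ large enough, while the $u$-dependent fluctuation leaves exactly a remainder of order $A x / n$. The choice of $u_{K,M,j}$ makes $\sum_{K,M \leq n}\sum_{j \geq 0} e^{-u_{K,M,j}} \leq e^{-x}$, because $\rho_1 D\log n$ dominates $\log(n^2) = 2\log n$ uniformly and the $j$-sum is geometric. The main obstacle is the step establishing the Talagrand/Bernstein deviation inequality for the dependent data: unlike the i.i.d. case one must pass through the mixing structure, either by blocking together with Berbee's coupling or via a dedicated concentration inequality for geometrically mixing sequences, all while keeping the variance proxy proportional to $\|t - s_{K,M}\|_2^2$ so that the self-normalization survives the peeling.
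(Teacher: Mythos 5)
Your proposal is correct and follows essentially the same architecture as the paper's proof: a Bernstein-type concentration inequality for the Markov-dependent windows (the paper uses the inequality of Paulin for Markov chains, plugged into a Massart-style bound on suprema, where you invoke Berbee coupling/blocking), a parametric entropy bound of intrinsic dimension $MK+K^2-1$ for $S_{K,M}$ (the paper uses bracketing entropy where you use covering numbers from the Lipschitz parametrization), the same peeling over shells (the paper applies Massart's Lemma 4.23 where you peel by hand), and the same choice $x_{K,M}^2\asymp (MK+K^2-1)\log(n)/n$ with a weighted union bound over the at most $n^2$ models. The only point to flag is that the log-covering number at scale $\delta$ is really of order $(MK+K^2-1)\log\bigl(\mathrm{poly}_L(n)/\delta\bigr)$ rather than $(MK+K^2-1)\log(1/\delta)$, because the Lipschitz constant of $(\Qbf,\fbf)\mapsto g^{\Qbf,\fbf}$ grows polynomially in $M$, $K$ and $\cquad$ (the paper tracks this carefully and ends with entropy $\le (MK+K^2-1)\log(n^{6L}/u)$); this costs only an extra $\log n$, which your choice of $x_{K,M}$ already absorbs and which is precisely where the logarithmic factor in the penalty comes from.
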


\begin{remark}
One can replace the constant $1/4$ in the first upper bound by any $\epsilon > 0$, at the cost of changing the constants $n_0$, $\rho$ and $A$.
\end{remark}

The structure of the proof follows the usual method to control empirical processes, see for instance \citet{Mas07}, Chapter 6, adapted to the HMM structure by \cite{dCGL15}. The novelty and main difficulty of the proof comes from the generalization to both nonparametric densities and an unknown number of states: we had to introduce a much finer control of the constants and of the bracketing entropy of the models in order to take the dependency in the order of the HMM into account.

%The first step is to obtain a Bernstein-like inequality on the densities, which allows to control the deviations of $|\nu(t)|$ for all $t$ in the models. This is done using results on ergodic Markov chains from \cite{Pau13}. The crucial step of the proof, and our main contribution, is to obtain a sharp control of the bracketing entropy of the model, which is necessary to control the deviations of the supremum of $|\nu(t)|$ on the whole model. We had to introduce a much finer control of the models than in \cite{dCGL15} in order to take the dependency in the order of the HMM into account. The penalty from Lemma \ref{lemma_control_of_Z} comes directly from this step. Finally, it is possible using a peeling lemma from \cite{Mas07} to transpose this result on the $Z_{K,M}$.

The details of the proof can be found in appendix \ref{sec_proof_control_Z}.

\subsection{Identifiability proofs}

\subsubsection{Proof of Corollary \ref{cor_distance_g_ast_S_KM}}
\label{sec_proof_cor_distance}

Denote by $\text{Proj}_A$ the orthogonal projection on a linear space $A$.

Since the union of $(\Pfrak_M)_{M \in \Mcal}$ is dense in $\Fcal$, we can take $M$ such that \textbf{[HidA]} or \textbf{[HidB]} holds for $\fbf^*_M = (f^*_{M,k})_{k \in \Xcal} := (\text{Proj}_{\Pfrak_M} f^*_k)_{k \in \Xcal}$.

We will need the following lemma.

\begin{lemma}
\label{lemma_projection}
\begin{equation*}
\forall \pi \in \Rbb^K, \; \forall \Qbf \in \Rbb^{K \times K}, \; \forall \fbf \in \Fcal^K, \; \forall M, \qquad
	\text{Proj}_{\Pfrak_M^{\otimes L}} \left( g^{\pi, \Qbf, \fbf} \right)
		= g^{\pi, \Qbf, \text{Proj}_{\Pfrak_M}(\fbf)}
\end{equation*}
\end{lemma}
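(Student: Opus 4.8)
The claim is that orthogonal projection onto the tensor-power space $\Pfrak_M^{\otimes L}$ commutes with the construction $g^{\pi,\Qbf,\fbf}$, in the sense that projecting the density is the same as projecting each emission density $f_k$ individually before forming $g$. The plan is to reduce the statement to the elementary fact that the orthogonal projection onto a tensor product of subspaces is the tensor product of the individual projections, and then to exploit the multilinear structure of $g^{\pi,\Qbf,\fbf}$ in the $f_k$'s. I would first recall from the definition that
\begin{equation*}
g^{\pi, \Qbf, \fbf} = \sum_{k_1, \dots, k_L = 1}^K \pi(k_1) \prod_{i=2}^{L} \Qbf(k_{i-1}, k_i) \bigotimes_{i=1}^L f_{k_i},
\end{equation*}
so that $g^{\pi,\Qbf,\fbf}$ is a finite linear combination of pure tensors $f_{k_1}\otimes\cdots\otimes f_{k_L}$ with scalar coefficients depending only on $\pi$ and $\Qbf$. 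Since $\text{Proj}_{\Pfrak_M^{\otimes L}}$ is linear, it suffices to understand its action on a single pure tensor.

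\textbf{Key steps.} First I would establish the tensorization lemma: for the nested subspace $\Pfrak_M \subset \Lbf^2(\Ycal,\mu)$, the orthogonal projection onto $\Pfrak_M^{\otimes L} \subset \Lbf^2(\Ycal^L, \mu^{\otimes L})$ factorizes as
\begin{equation*}
\text{Proj}_{\Pfrak_M^{\otimes L}} = \text{Proj}_{\Pfrak_M}^{\otimes L},
\end{equation*}
i.e. $\text{Proj}_{\Pfrak_M^{\otimes L}}(h_1 \otimes \cdots \otimes h_L) = \bigotimes_{i=1}^L \text{Proj}_{\Pfrak_M}(h_i)$ for any $h_i \in \Lbf^2(\Ycal,\mu)$. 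This is standard: picking an orthonormal basis $(\varphi_a)_a$ of $\Pfrak_M$, the products $\varphi_{a_1}\otimes\cdots\otimes\varphi_{a_L}$ form an orthonormal basis of $\Pfrak_M^{\otimes L}$, and expanding the projection of a pure tensor in this basis gives exactly the product of the coordinatewise projections. Second, applying this to each tensor $\bigotimes_{i=1}^L f_{k_i}$ appearing in $g^{\pi,\Qbf,\fbf}$ yields $\bigotimes_{i=1}^L \text{Proj}_{\Pfrak_M}(f_{k_i})$. Third, I would reassemble the linear combination: since the coefficients $\pi(k_1)\prod_{i=2}^L \Qbf(k_{i-1},k_i)$ are untouched, summing back over the indices gives precisely $g^{\pi, \Qbf, \text{Proj}_{\Pfrak_M}(\fbf)}$, where $\text{Proj}_{\Pfrak_M}(\fbf) := (\text{Proj}_{\Pfrak_M} f_1, \dots, \text{Proj}_{\Pfrak_M} f_K)$.

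\textbf{Main obstacle.} The computation is entirely routine once the tensorization of projections is in hand, so there is no deep obstacle; the only point requiring mild care is the justification that $\text{Proj}_{\Pfrak_M^{\otimes L}}$ genuinely factorizes as a tensor product of projections. This rests on the identification $\Lbf^2(\Ycal^L, \mu^{\otimes L}) \cong \Lbf^2(\Ycal,\mu)^{\otimes L}$ (a Hilbert space tensor product) and the fact that an orthonormal basis of $\Pfrak_M$ generates, via tensor products, an orthonormal basis of $\Pfrak_M^{\otimes L}$; one then checks that the candidate operator $\text{Proj}_{\Pfrak_M}^{\otimes L}$ is the identity on $\Pfrak_M^{\otimes L}$ and annihilates its orthogonal complement, which characterizes the orthogonal projection uniquely. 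I note that no hypothesis that the $f_k$ be genuine probability densities is used, and linearity in $\pi$ and $\Qbf$ is immaterial here since they enter only as fixed scalar coefficients; this is why the lemma holds for arbitrary $\pi \in \Rbb^K$ and $\Qbf \in \Rbb^{K\times K}$.
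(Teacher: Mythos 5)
Your proposal is correct and follows exactly the paper's argument: reduce by linearity to pure tensors and use the fact that $\text{Proj}_{\Pfrak_M^{\otimes L}}$ factorizes as $\text{Proj}_{\Pfrak_M}^{\otimes L}$, a step the paper merely declares ``easy to check'' and you justify via an orthonormal basis of $\Pfrak_M^{\otimes L}$. No gaps.
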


\begin{proof}
By linearity of the projection operator, it is enough to prove that for all $(t_1, \dots, t_L) \in (\Lbf^2(\Ycal, \mu))^L$,
\begin{equation*}
\text{Proj}_{\Pfrak_M^{\otimes L}} (t_1 \otimes \dots \otimes t_L)
	= \text{Proj}_{\Pfrak_M}(t_1) \otimes \dots \otimes \text{Proj}_{\Pfrak_M}(t_L)
\end{equation*}
which is easy to check.
\end{proof}

We will make a proof by contradiction. Assume that $\inf_{t \in S_K} \| t - g^* \|_2 = 0$ for some $K < K^*$. Then there exists a sequence $(g_n)_{n \geq 1} = (g^{\pi_n, \Qbf_n, \fbf_n})_{n \geq 1}$ such that $g_n \longrightarrow g^*$ in $\Lbf^2(\Ycal^L, \mu^{\otimes L})$, with $\pi_n \in \Delta_K$, $\Qbf_n$ a transition matrix of size $K$ and $\fbf_n \in \Fcal^K$.

The orthogonal projection on $\Pfrak_M^{\otimes L}$ is continuous, so by using Lemma \ref{lemma_projection}, one gets that
\begin{equation*}
g^{\pi_n, \Qbf_n, \text{Proj}_{\Pfrak_M}(\fbf_n)}
	\longrightarrow g^{\pi^*, \Qbf^*, \fbf^*_M}
\end{equation*}

Then, using the compacity of $\Delta_K$ and of the set of transition matrices of size $K$ and the relative compacity of $(\Fcal \cap \Pfrak_M)^K$ (which is a bounded subset of a finite dimension linear space), one gets (up to extraction of a subsequence) that there exists $\pi_\infty \in \Delta_K$, $\Qbf_\infty$ a transition matrix of size $K$ and $\fbf_\infty \in (\Pfrak_M)^K$ such that $\pi_n \longrightarrow \pi_\infty$, $\Qbf_n \longrightarrow \Qbf_\infty$ and $\text{Proj}_{\Pfrak_M}(\fbf_n) \longrightarrow \fbf_\infty$.

Finally, using the continuity of the function $(\pi, \Qbf, \fbf) \longmapsto g^{\pi, \Qbf, \fbf}$ and the unicity of the limit, one gets
\begin{equation*}
g^{\pi_\infty, \Qbf_\infty, \fbf_\infty} = g^{\pi^*, \Qbf^*, \fbf^*_M}.
\end{equation*}

Then Proposition \ref{prop_identifiability} contradicts the assumption $K < K^*$, which is enough to conclude.

\subsection{Consistency proofs}

\label{sec_proof_th_underestimation}

The definition of $\hat{K}_\text{l.s.}$ is equivalent to the following one:
\begin{equation*}
\hat{K}_\text{l.s.} \in \underset{K \leq n}{\argmin} \{ \gamma_n(\hat{g}_{K,\hat{M}_K}) + \pen(n,\hat{M}_K,K) \}
\end{equation*}
where
\begin{equation*}
\hat{M}_K \in \underset{M \leq n}{\argmin} \{ \gamma_n(\hat{g}_{K,M}) + \pen(n,M,K) \}
\end{equation*}
Choosing $K$ rather than $K^*$ means that $K$ is better than $K^*$, i.e.
\begin{multline*}
\{ \hat{K}_\text{l.s.} = K \}
	\subset \Big\{ 0 \geq
		\underset{M \leq n}{\inf} \{ \gamma_n(\hat{g}_{K,M}) + \pen(n,M,K) \} \\
		- \underset{M \leq n}{\inf} \{ \gamma_n(\hat{g}_{K^*,M}) + \pen(n,M,K^*) \}
		\Big\}.
\end{multline*}
Let
\begin{align*}
D_{n,K}~:=& \, \underset{M \leq n}{\inf} \{ \gamma_n(\hat{g}_{K,M}) + \pen(n,M,K) \} \\
	& \qquad \qquad - \underset{M \leq n}{\inf} \{ \gamma_n(\hat{g}_{K^*,M}) + \pen(n,M,K^*) \} \\
	=& \, \gamma_n(\hat{g}_{K,\hat{M}_K}) + \pen(n,\hat{M}_K,K) \\
	& \qquad \qquad - \underset{M \leq n}{\inf} \{ \underset{t \in S_{K^*,M}}{\inf} \gamma_n(t) + \pen(n,M,K^*) \}.
\end{align*}
Then
\begin{equation*}
\{ \hat{K}_\text{l.s.} = K \} \subset \{ D_{n,K} \leq 0 \}.
\end{equation*}
We will thus control the probability of the latter event for all $K < K^*$ in the first case and $K > K^*$ in the second case.

\paragraph{Proof of Theorem \ref{th_underestimation}}

Let $M_0 \in \Mcal$. We will choose a suitable value for this integer later in the proof. Assume $n \geq M_0$. Then by definition of $D_{n,K}$ and of $\nu$ (equation (\ref{eq_def_nu})),
\begin{align*}
D_{n,K}
	&\geq \gamma_n(\hat{g}_{K,\hat{M}_K}) + \pen(n,\hat{M}_K,K) - \gamma_n(g^*_{K^*,M_0}) - \pen(n,M_0,K^*)) \\
	&\geq \| g^* - \hat{g}_{K,\hat{M}_K} \|_2^2
			- \| g^* - g^*_{K^*,M_0} \|_2^2
			- 2 \nu(\hat{g}_{K,\hat{M}_K} - g^*_{K^*,M_0}) \\
	&\qquad \qquad \qquad + \pen(n,\hat{M}_K,K) - \pen(n,M_0,K^*).
\end{align*}
Using the definition of $Z_{K,M}$ (equation (\ref{eq_def_Z})), one gets that
\begin{align*}
| \nu(\hat{g}_{K,\hat{M}_K} - g^*_{K^*,M_0}) |
	\leq\; & | \nu(\hat{g}_{K,\hat{M}_K} - g^*) | + | \nu(g^* - g^*_{K^*,M_0}) | \\
	\leq\; & Z_{K,\hat{M}_K}(g^*) ( \| g^* - \hat{g}_{K,\hat{M}_K} \|_2^2 + x_{K,\hat{M}_K}^2 ) \\
		&+ Z_{K^*, M_0}(g^*) ( \| g^* - g^*_{K^*,M_0} \|_2^2 + x_{K^*, M_0}^2 ).
\end{align*}
Let $n_0$, $\rho$ and $A$ be as in Lemma \ref{lemma_control_of_Z}. We can assume that $n_0 \geq K^*$ so that $K^* \leq n$. Let us introduce the function $\pentilde(n,M,K) = \rho (MK + K^2 - 1) \frac{\log(n)}{n}$. Let $n \geq n_0$ and $x>0$ and assume we are in the event of probability $1 - e^{-x}$ of Lemma \ref{lemma_control_of_Z}. Then, for all $K \leq n$:
\begin{align*}
| \nu(\hat{g}_{K,\hat{M}_K} - g^*_{K^*,M_0}) |
	\; \leq \; & \frac{1}{4}  \| g^* - \hat{g}_{K,\hat{M}_K} \|_2^2
			+ \frac{1}{2} A \frac{x}{n}
			+ \frac{1}{2} \pentilde(n, \hat{M}_K, K) \\
		&+ \frac{1}{4} \| g^* - g^*_{K^*,M_0} \|_2^2
			+ \frac{1}{2} A \frac{x}{n}
			+ \frac{1}{2} \pentilde(n, M_0, K^*)
\end{align*}
and
\begin{multline*}
D_{n,K} \geq \frac{1}{2} \| g^* - \hat{g}_{K,\hat{M}_K} \|_2^2
			- \frac{3}{2} \| g^* - g^*_{K^*,M_0} \|_2^2
			- 2 A \frac{x}{n}
			+ \pen(n,\hat{M}_K,K) \\
			- \pen(n,M_0,K^*)
			- \pentilde(n, \hat{M}_K, K)
			- \pentilde(n, M_0, K^*).
\end{multline*}
We assumed $\pen \geq \pentilde$, so that
\begin{align*}
D_{n,K} &\geq \frac{1}{2} \| g^* - \hat{g}_{K,\hat{M}_K} \|_2^2
			- \frac{3}{2} \| g^* - g^*_{K^*,M_0} \|_2^2
			- 2 A \frac{x}{n}
			- 2 \pen(n,M_0,K^*)
\end{align*}
Corollary \ref{cor_distance_g_ast_S_KM} ensures that
\begin{equation*}
d := \inf_{K < K^*} \inf_{t \in S_K} \| t - g^* \|_2 > 0,
\end{equation*}
so that for all $K < K^*$,
\begin{align*}
D_{n,K} &\geq \frac{d^2}{2}
			- \frac{3}{2} \| g^* - g^*_{K^*,M_0} \|_2^2
			- 2 A \frac{x}{n}
			- 2 \pen(n,M_0,K^*).
\end{align*}
By density of $(\Pfrak_M)_{M \in \Mcal}$ in $\Fcal$, one gets that
\begin{equation*}
\inf_M \| g^*_{K^*,M} - g^* \|_2 = 0
\end{equation*}
so that there exists $M_0$ such that $\| g^* - g^*_{K^*,M_0} \|_2^2 \leq d^2 / 6$. If we choose this $M_0$, we get that
\begin{align*}
D_{n,K} &\geq \frac{d^2}{4}
			- 2 A \frac{x}{n}
			- 2 \pen(n,M_0,K^*).
\end{align*}
Which implies that $D_{n,K} > 0$ as soon as $2 A x / n < d^2 / 4 - 2 \pen(n, M_0, K^*)$, i.e.
\begin{equation*}
x < \left(\frac{d^2}{8} - \pen(n,M_0,K^*) \right) \frac{n}{A}.
\end{equation*}
To conclude, note that there exists $\tilde{n_0} \geq \max(n_0, M_0)$ such that for all $n \geq \tilde{n_0}$, $\pen(n,M_0,K^*) \leq \frac{d^2}{16}$. Then, letting $\beta = \frac{d^2}{16 A}$, one has for all $n \geq \tilde{n_0}$, with probability $1 - e^{-\beta n}$, for all $K < K^*$, $D_{n,K} > 0$, which implies that $\hat{K}_\text{l.s.} \neq K$.

\paragraph{Proof of Theorem \ref{th_overestimation}}

For all $K \geq K^*$,
\begin{align*}
D_{n,K} \geq \gamma_n(\hat{g}_{K,\hat{M}_K}) + \pen(n,\hat{M}_K,K)
	- \gamma_n(g^*_{K^*,\hat{M}_K}) - \pen(n,\hat{M}_K,K^*)
\end{align*}
and
\begin{multline*}
\gamma_n(\hat{g}_{K,\hat{M}_K}) - \gamma_n(g^*_{K^*,\hat{M}_K})
	= \| \hat{g}_{K,\hat{M}_K} - g^* \|_2^2 - \| g^*_{K^*,\hat{M}_K} - g^* \|_2^2 \\
	- 2 \nu(\hat{g}_{K,\hat{M}_K} - g^*_{K^*,\hat{M}_K}).
\end{multline*}
Note that $g^*_{K^*,\hat{M}_K} = g^*_{K,\hat{M}_K}$ is the orthogonal projection of $g^*$ on $\Pfrak_{\hat{M}_K}^{\otimes L}$ and $\hat{g}_{K,\hat{M}_K} \in S_{K, \hat{M}_K} \subset \Pfrak_{\hat{M}_K}^{\otimes L}$, so that, using the Pythagorean Theorem,
\begin{equation*}
\| \hat{g}_{K,\hat{M}_K} - g^* \|_2^2 - \| g^*_{K^*,\hat{M}_K} - g^* \|_2^2 = \| \hat{g}_{K,\hat{M}_K} - g^*_{K^*,\hat{M}_K} \|_2^2.
\end{equation*}
Let $n_0$, $\rho$ and $A$ be as in Lemma \ref{lemma_control_of_Z}. We can assume that $n_0 \geq K^*$ so that $K^* \leq n$. Let us introduce the function $\pentilde(n,M,K) = \rho (MK + K^2 - 1) \frac{\log(n)}{n}$. Let $n \geq n_0$ and $x>0$ and assume we are in the event of probability $1 - e^{-x}$ of Lemma \ref{lemma_control_of_Z}. Then, for all $K \leq n$ such that $K \geq K^*$:
\begin{align*}
| \nu(\hat{g}_{K,\hat{M}_K} - g^*_{K^*,\hat{M}_K}) |
	&= | \nu(\hat{g}_{K,\hat{M}_K} - g^*_{K,\hat{M}_K}) | \\
	&\leq Z_{K,\hat{M}_K}((g^*_{K',M'})_{K', M'}) \| \hat{g}_{K,\hat{M}_K} - g^*_{K,\hat{M}_K} \|_2^2 \\
	&\qquad \qquad + Z_{K,\hat{M}_K}((g^*_{K',M'})_{K', M'}) x_{K,\hat{M}_K}^2 \\
	&\leq \frac{1}{4} \| \hat{g}_{K,\hat{M}_K} - g^*_{K,\hat{M}_K} \|_2^2 + \frac{1}{2} A \frac{x}{n} + \frac{1}{2} \pentilde(n,\hat{M}_K,K),
\end{align*}
which implies
\begin{align*}
\gamma_n(\hat{g}_{K,\hat{M}_K}) - \gamma_n(g^*_{K^*,\hat{M}_K})
	&\geq \frac{1}{2} \| \hat{g}_{K,\hat{M}_K} - g^*_{K,\hat{M}_K} \|_2^2 - A \frac{x}{n} - \pentilde(n,\hat{M}_K,K) \\
	&\geq - A \frac{x}{n} - \pentilde(n,\hat{M}_K,K)
\end{align*}
so that for all $K \leq n$ such that $K \geq K^*$:
\begin{align*}
D_{n,K} \geq \pen(n, \hat{M}_K, K) - \pen(n, \hat{M}_K, K^*) - \pentilde (n, \hat{M}_K, K) - A \frac{x}{n}.
\end{align*}
Now, assume that \textbf{[Hpen]}($\alpha, \rho$) holds for some $\alpha > 0$ and the above constant $\rho$. Then there exists $n_1$ such that for all $n \geq n_1$ and for all $K \leq n$ such that $K \geq K^*$,
\begin{align*}
D_{n,K} \geq \alpha \frac{\log(n)}{n} - A \frac{x}{n},
\end{align*}
which is strictly positive as soon as $x < \alpha \log(n) / A$.
Thus, letting $\beta = 1/(2A)$, one has for all $n \geq \max(n_0, n_1, K^*)$, with probability $1 - n^{- \beta \alpha}$, for all $K \leq n$ such that $K > K^*$, $D_{n,K} > 0$, which implies that $\hat{K}_\text{l.s.} \neq K$. This concludes the proof.

\section{Acknowledgment}

We would like to thank Elisabeth Gassiat for her precious advice and Yohann de Castro for his codes which were at the root of our numerical experiments.

% AOS,AOAS: If there are supplements please fill:
%\begin{supplement}[id=suppA]
%  \sname{Supplement A}
%  \stitle{Spectral algorithm and additional proofs}
%  \slink[doi]{}
%  \sdatatype{.zip}" 
%  \sdescription{We provide the spectral algorithm we used in our simulations as well as the proof of the oracle inequality (Theorem \ref{th_oracle_g} and Lemma \ref{eq_upperbound}) and of Lemma \ref{lemma_control_of_Z}.}
%\end{supplement}

%\bibliographystyle{abbrv}
\bibliographystyle{imsart-nameyear}
\bibliography{these}

\newpage
\appendix

\section{Spectral algorithm}
\label{app_algo}

{
\begin{algorithm}[!h]
  \SetAlgoLined
  \KwData{An observed chain $(Y_{1},\ldots,Y_{n})$ and an order $K$.}
  \KwResult{Spectral estimators $\hat\pi$, $\hat\Qbf$ and the estimators $(\hat f_{M,k})_{k\in\Xcal}$ of $(f^*_k)_{k\in\Xcal}$ in $\Pfrak_M$ (equipped with an orthonormal basis $\Phi_M = (\varphi_1, \dots, \varphi_M)$).} 
    \BlankLine
\begin{enumerate}
\item[{\bf [Step 1]}] Consider the following empirical estimators: for any $a,b,c$ in $\{1,\ldots,M\}$,
	\begin{itemize}
	\item $\hat{\Lbf}_M(a):=\frac{1}{n} \sum_{s=1}^{n}\varphi_{a}(Y_{1}^{(s)})$,
	\item $\hat{\Mbf}_M(a,b,c):=\frac{1}{n} \sum_{s=1}^{n}\varphi_{a}(Y_{1}^{(s)})\varphi_{b}(Y_{2}^{(s)})\varphi_{c}(Y_{3}^{(s)})$,
	\item $\hat{\Nbf}_M(a,b):=\frac{1}{n} \sum_{s=1}^{n}\varphi_{a}(Y_{1}^{(s)})\varphi_{b}( Y_{2}^{(s)})$,
	\item $\hat{\Pbf}_M(a,c):=\frac{1}{n} \sum_{s=1}^{n}\varphi_{a}(Y_{1}^{(s)})\varphi_{c}(Y_{3}^{(s)})$.
	\end{itemize}
\item[{\bf [Step 2]}]
Let $\hat{\Ubf}$ be the $M\times K$ matrix of orthonormal right singular vectors of $\hat{\Pbf}_M$ corresponding to its top $K$ singular values. 
\item[{\bf [Step 3]}]
Form the matrices $\hat{\Bbf}(b):=(\hat{\Ubf}^{\top}\hat{\Pbf}_M\hat{\Ubf})^{-1}\hat{\Ubf}^{\top}\hat{\Mbf}_M(\ldotp,b,\ldotp)\hat{\Ubf}$ for all $b\in\{1,\ldots,M\}$.
\item[{\bf [Step 4]}]
Set $\Theta$ a $(K\times K)$ uniformly drawn random unitary matrix and form the matrices ${\hat{\Cbf}}(k):=\sum_{b=1}^{M}(\hat{\Ubf}\Theta)(b,k)\hat{\Bbf}(b)$ for all $k\in\{1,\ldots,K\}$.
\item[{\bf [Step 5]}]
Compute $\hat{\Rbf}$ a $(K\times K)$ unit Euclidean norm columns matrix that diagonalizes the matrix $\hat{\Cbf}(1)$: $\hat{\Rbf}^{-1}\hat{\Cbf}(1)\hat{\Rbf}=\text{Diag}{(\hat\Lambda(1,1),\ldots,\hat\Lambda(1,K))}$.
\item[{\bf [Step 6]}]
Set $\hat\Lambda(k,k'):=(\hat{\Rbf}^{-1}\hat{\Cbf}(k)\hat{\Rbf})(k',k')$ for all $k,k'\in\Xcal$ and $\hat{\Obf}_M:=\hat{\Ubf}\Theta\hat\Lambda$.
\item[{\bf [Step 7]}]
Consider the emission distributions estimator $\hat\fbf:=(\hat f_{M,k})_{k\in\Xcal}$ defined by $\hat f_{M,k}:=\sum_{m=1}^{M}\hat{\Obf}_M(m,k)\varphi_{{m}}$ for all $k\in\Xcal$.
\item[{\bf [Step 8]}]
Set $\tilde\pi := \Pi_{\Delta_K}\Big(\big(\hat{\Ubf}^{\top}\hat{\Obf}_M\big)^{-1}\hat{\Ubf}^{\top}\hat{\Lbf}_M\Big)$ where $\Pi_{\Delta_K}$ denotes the projection onto the simplex in dimension $K$.
\item[{\bf [Step 9]}]
Consider the transition matrix estimator: 
\[
\hat\Qbf:=\Pi_{\mathrm{TM}}\Big(\big(\hat{\Ubf}^{\top}\hat{\Obf}_M\text{Diag}{\tilde\pi}\big)^{-1}\hat{\Ubf}^{\top}\hat{\Nbf}_M\hat{\Ubf}\big(\hat{\Obf}_M^{\top}\hat{\Ubf}\big)^{-1}\Big)\,,
\] where $\Pi_{\mathrm{TM}}$ denotes the projection %(with respect to the scalar product given by the Frobenius norm) 
onto the convex set of transition matrices, and define
$\hat\pi$ as the stationary distribution of $\hat\Qbf$.
\end{enumerate}
  \caption{Spectral estimation of HMM parameters (\cite{dCGL15}, \cite{dCGLLC15})}
\label{alg_Spectral}
\end{algorithm}}

\section{Proofs of the oracle inequalities}
\label{app_oracle_proof}

\subsection{Proof of Theorem \ref{th_oracle_g}}
\label{sec_proof_th_oracle}

Let $K \leq n$ and $M \leq n$. Then
\begin{align*}
\gamma_n(\hat{g}) + \pen(n,\hat{M},\hat{K}_\text{l.s.})
	&\leq \gamma_n(\hat{g}_{K,M}) + \pen(n,M,K) \\
	&\leq \gamma_n(g^*_{K,M}) + \pen(n,M,K)
\end{align*}
where the first inequality comes from the definition of $(\hat{K}_\text{l.s.}, \hat{M})$ and the second from the definition of $\hat{g}_{K,M}$. Therefore,
\begin{align*}
\gamma_n(\hat{g}) - \gamma_n(g^*_{K,M})
&\leq \pen(n,M,K) - \pen(n,\hat{M},\hat{K}_\text{l.s.}).
\end{align*}
By definition of $\nu$ (equation \ref{eq_def_nu}),
\begin{align*}
\gamma_n(t_1) - \gamma_n(t_2)
	= \| t_1 - g^* \|_2^2 - \| t_2 - g^* \|_2^2
		- 2 \nu(t_1 - t_2)
\end{align*}
so that
\begin{multline*}
\| \hat{g} - g^* \|_2^2 
\leq \| g^*_{K,M} - g^* \|_2^2
	+ \pen(n,M,K) - \pen(n,\hat{M},\hat{K}_\text{l.s.}) \\
	+ 2 \nu(\hat{g}_{\hat{M},\hat{K}_\text{l.s.}} - g^*_{K,M})
\end{multline*}
Now we want to control the $\nu$ term. By linearity,
\begin{align*}
\nu(\hat{g}_{\hat{K}_\text{l.s.},\hat{M}} - g^*_{K,M})
	&= \nu(\hat{g}_{\hat{K}_\text{l.s.},\hat{M}} - g^*)
		+ \nu(g^* - g^*_{K,M})
\end{align*}
Using the definition of $Z_{K,M}$ (equation \ref{eq_def_Z}), we get that 
\begin{equation*}
\begin{cases}
| \nu(\hat{g}_{\hat{K}_\text{l.s.},\hat{M}} - g^*) |
	\leq Z_{\hat{K}_\text{l.s.},\hat{M}}(g^*) ( \| \hat{g}_{\hat{K}_\text{l.s.},\hat{M}} - g^* \|_2^2 + x_{\hat{K}_\text{l.s.},\hat{M}}^2) \\
| \nu(g^*_{K,M} - g^*) |
	\leq Z_{K,M}(g^*) ( \| g^*_{K,M} - g^* \|_2^2 + x_{K,M}^2)
\end{cases}
\end{equation*}
so that, using Lemma \ref{lemma_control_of_Z}, for all $n \geq n_0$ and $x>0$, with probability larger than $1 - e^{-x}$, for all $M \leq n$ and $K \leq n$,
\begin{multline*}
| \nu(\hat{g}_{\hat{K}_\text{l.s.},\hat{M}} - g^*_{K,M}) |
	\leq \frac{1}{4} \| \hat{g} - g^* \|_2^2 + \frac{1}{4} \| g^*_{K,M} - g^* \|_2^2 + A \frac{x}{n} \\ + \frac{1}{2} \pen(n,\hat{M},\hat{K}_\text{l.s.}) + \frac{1}{2} \pen(n,M,K)
\end{multline*}
so that
\begin{align*}
\| \hat{g} - g^* \|_2^2 
\leq& \| g^*_{K,M} - g^* \|_2^2 + 2 \pen(n,M,K) \\
	& + \frac{1}{2} \| \hat{g} - g^* \|_2^2
	+ \frac{1}{2} \| g^*_{K,M} - g^* \|_2^2 + 2 A \frac{x}{n},
\end{align*}
which means that
\begin{align*}
\frac{1}{2} \| \hat{g} - g^* \|_2^2 
\leq& \frac{3}{2} \| g^*_{K,M} - g^* \|_2^2 + 2 \pen(n,M,K) + 2 A \frac{x}{n}
\end{align*}
and finally
\begin{align*}
\| \hat{g} - g^* \|_2^2 
\leq& 4 \underset{K \leq n, \, M \leq n}{\inf} \{ \| g^*_{K,M} - g^* \|_2^2 + \pen(n,M,K) \} + 4 A \frac{x}{n}
\end{align*}
which is the expected inequality.

\subsection{Proof of equation \ref{eq_upperbound}}
\label{sec_proof_eq_upperbound}

First, decompose the difference in three terms.
\begin{multline*}
\| g^{\pi_1, \Qbf_1, \fbf_1} - g^{\pi_2, \Qbf_2, \fbf_2} \|_2
	\leq \| g^{\pi_1, \Qbf_1, \fbf_1} - g^{\pi_2, \Qbf_1, \fbf_1} \|_2
		+ \| g^{\pi_2, \Qbf_1, \fbf_1} - g^{\pi_2, \Qbf_2, \fbf_1} \|_2 \\
		+ \| g^{\pi_2, \Qbf_2, \fbf_1} - g^{\pi_2, \Qbf_2, \fbf_2} \|_2
\end{multline*}
Then we can control each term separately. Let $(\varphi_m)_{m \in \Nbb^*}$ be an orthonormal basis of $\cup_M \Pfrak_M$.
\begin{align*}
\| g^{\pi_1, \Qbf, \fbf} - g^{\pi_2, \Qbf, \fbf} \|_2^2
	=& \left\| \sum_{\kbf \in \Xcal^L} (\pi_1 - \pi_2)_{k_1}
			\Qbf_{k_1, k_2} \dots \Qbf_{k_{L-1}, k_L}
			\bigotimes_{i=1}^L f_{k_i} \right\|_2^2 \\
	=& \sum_{\mbf \in (\Nbb^*)^L} \left( \sum_{\kbf \in \Xcal^L} (\pi_1 - \pi_2)_{k_1}
			\Qbf_{k_1, k_2} \dots \Qbf_{k_{L-1}, k_L}
			\prod_{i=1}^L \langle f_{k_i}, \varphi_{m_i} \rangle \right)^2 \\
	\leq& \sum_{\kbf \in \Xcal^L} (\pi_1 - \pi_2)_{k_1}^2
			\Qbf_{k_1, k_2} \dots \Qbf_{k_{L-1}, k_L} \\
		&\times \sum_{\kbf' \in \Xcal^L} \Qbf_{k'_1, k'_2} \dots \Qbf_{k'_{L-1}, k'_L}
			\prod_{i=1}^L \sum_{m_i \in \Nbb^*} \langle f_{k'_i}, \varphi_{m_i} \rangle^2
\end{align*}
using Cauchy-Schwarz inequality. Then, since $\sum_{m_i \in \Nbb^*} \langle f_{k'_i}, \varphi_{m_i} \rangle^2 = \|f_{k'_i}\|_2^2 \leq \cquad^2$ by \textbf{[HF]} and $\Qbf$ is a transition matrix, we get that
\begin{align*}
\| g^{\pi_1, \Qbf, \fbf} - g^{\pi_2, \Qbf, \fbf} \|_2^2
	\leq K \cquad^{2L} \| \pi_1 - \pi_2 \|^2
\end{align*}
A similar decomposition leads to
\begin{align*}
\| g^{\pi, \Qbf_1, \fbf} - g^{\pi, \Qbf_2, \fbf} \|_2^2
	\leq (L-1) K \cquad^{2L} \| \Qbf_1 - \Qbf_2 \|_F^2
\end{align*}
and
\begin{align*}
\| g^{\pi, \Qbf, \fbf_1} - g^{\pi, \Qbf, \fbf_2} \|_2^2
	\leq L K \cquad^{2(L-1)} \sum_{k \in \Xcal} \| (\fbf_1)_k - (\fbf_2)_k \|_2^2
\end{align*}
These inequalities remain true if the states of the second set of parameters are swapped. Then, we use that $\cquad \geq 1$ by \textbf{[HF]} to conclude.

\subsection{Proof of Lemma \ref{lemma_H_nonzeropolynomial}}
\label{sec_proof_H_nonzeropolynomial}

In the following, we will identify the quadratic form $M$ derived from the second order expansion of $x \mapsto \Nfrak(x)$ and its matrix. Likewise, we will identify the quadratic form $M_\Cfrak$ derived from the second order expansion of $x \mapsto \Nfrak(I_\Cfrak(x))$ with its matrix. Without loss of generality, one can assume $L=3$.

\paragraph{Choice of parameters and expression of $M$.}

Let $\pi \in \Delta_{K^*}$ be the uniform distribution on $\Xcal$, $\Qbf = \Id_{K^*}$ and $\fbf$ such that $\langle f_i, f_j \rangle = F \one_{i=j}$ for some constant $F > 0$. For instance, the $f_i$'s are $F$ times the indicating functions of distinct measurable sets with same measure $\frac{1}{F}$ for $\mu$. In that case, $(f_i / \sqrt{F})_i$ is an orthonormal basis, and the quantity $g^{\pi + p, \Qbf + q, \fbf + A \fbf} - g^{\pi, \Qbf, \fbf}$ can be broken down into three order one terms in $p$, $q$ and $A$:
\begin{itemize}
\item the term in $p$: $\sum_i p_i f_i \otimes f_i \otimes f_i$ ;
\item the term in $q$: $\sum_{i,k} q(i,k) f_i \otimes (f_i + f_k) \otimes f_k$ ;
\item the term in $A$: $\sum_i \left( (Af)_i \otimes f_i \otimes f_i + f_i \otimes (Af)_i \otimes f_i + f_i \otimes f_i \otimes (Af)_i \right)$.
\end{itemize}

Now we can make the list of all second-order terms in the expansion of the quantity ${\| g^{\pi + p, \Qbf + q, \fbf + A \fbf} - g^{\pi, \Qbf, \fbf} \|_2^2}$:
\begin{itemize}
\item $p$ and $p$: $F^3 \sum_i p_i^2$ ;
\item $p$ and $q$: $2 F^3 \sum_i p_i q(i,i)$ ;
\item $p$ and $A$: $3 F^3 \sum_i p_i A_{i,i}$ ;
\item $q$ and $q$: $2 F^3 \sum_{i,k} q(i,k)^2 + 2 F^3 \sum_i q(i,i)^2 $ ;
\item $q$ and $A$: $F^3 \sum_{i,k} q(i,k) A_{k,i} 
		+ F^3 \sum_{i,k} q(i,k) A_{i,k}
		+ 4 F^3 \sum_{i} q(i,i) A_{i,i}$ ;
\item $A$ and $A$: $6 F^3 \sum_i A_{i,i}^2 + 3 F^3 \sum_{i,k} A_{i,k}^2$.
\end{itemize}
We can now write the matrix $M$. In order to clarify the structure of this matrix, let us swap the components of the parameters $(p,q,A)$ and consider the new parameters $(A_\text{diag}, A_\text{else}, p, q_\text{diag}, q_\text{else})$, where $A_\text{diag}$ (resp. $q_\text{diag}$) is a vector of size $K^*$ containing the diagonal coefficients of $A$ (resp. $q$) and $A_\text{else}$ (resp. $q_\text{else}$) contains its other coefficients. Then the matrix is:
\begin{equation*}
M_\text{swapped} = F^3 \left(
\begin{array}{c c | c | c c}
9 \Id_{K^*} & 0  & 3\Id_{K^*} & 6\Id_{K^*} & 0 \\
0 & 3\Id_{{K^*}({K^*}-1)} & 0 & 0 & X \\
\hline
3\Id_{K^*} & 0 & \Id_{K^*} & 2\Id_{K^*} & 0 \\
\hline
6\Id_{K^*} & 0 & 2\Id_{K^*} & 4\Id_{K^*} & 0 \\
0 & X & 0 & 0 & 2\Id_{{K^*}({K^*}-1)}
\end{array}
\right),
\end{equation*}
where $X [(A_{i,j})_{i \neq j}] = (A_{i,j} + A_{j,i})_{i \neq j}$.

\paragraph{Kernel of $M$.}

Substracting the first block of lines to the third and fourth blocks of lines and then the first block of columns to the third and fourth blocks of columns does not change the rank and leads to the matrix
\begin{equation*}
F^3 \left(
\begin{array}{c c | c | c c}
9 \Id_K & 0  & 0 & 0 & 0 \\
0 & 3\Id_{K(K-1)} & 0 & 0 & X \\
\hline
0 & 0 & 0 & 0 & 0 \\
\hline
0 & 0 & 0 & 0 & 0 \\
0 & X & 0 & 0 & 2\Id_{K(K-1)}
\end{array}
\right)
\end{equation*}
Thus $\dim(\Ker (M)) \geq 2K$, where $\Ker(M)$ is the kernel of $M$ and $\dim$ denotes the dimension. If one takes away the lines and columns corresponding to $p$ and $q_\text{diag}$, one gets the matrix
\begin{equation*}
F^3 \left(
\begin{array}{c c c}
9 \Id_{K^*} & 0 & 0 \\
0 & 3\Id_{{K^*}({K^*}-1)} & X \\
0 & X & 2\Id_{{K^*}({K^*}-1)}
\end{array}
\right).
\end{equation*}
This matrix is invertible. Therefore, $\dim(\Ker (M)) = 2K$. Now, for all $i \in [K^*]$, let $e_i^1$ and $e_i^2$ be the vectors defined as
\begin{equation*}
\begin{cases}
(e_i^1)_{p_k} = 0 \qquad \text{for all $k$} \\
(e_i^1)_{A_{k,l}} = 0 \qquad \text{for all $(k,l) \neq (i,i)$} \\
(e_i^1)_{q(k,l)} = 0 \qquad \text{for all $(k,l) \neq (i,i)$} \\
(e_i^1)_{A_{i,i}} = 2 \\
(e_i^1)_{q(i,i)} = -3
\end{cases}
\end{equation*}
and
\begin{equation*}
\begin{cases}
(e_i^2)_{p_k} = 0 \qquad \text{for all $k \neq i$} \\
(e_i^2)_{A_{k,l}} = 0 \qquad \text{for all $(k,l) \neq (i,i)$} \\
(e_i^2)_{q(k,l)} = 0 \qquad \text{for all $(k,l)$} \\
(e_i^2)_{A_{i,i}} = 1 \\
(e_i^2)_{p_i} = -3
\end{cases}
\end{equation*}
One can easily check that these vectors are linearly independant and are all in $\Ker(M)$. Thus, they are a basis of the kernel of $M$: $\Ker(M) = \Span(\{e_i^1, e_i^2 \, | \, i \in [K]\})$.

\paragraph{Nondegeneracy of $M$ restricted on $\Cfrak$.}

Since $M$ is symmetric, and thus diagonalisable in an orthonormal basis,
\begin{equation}
M = P_{\Ker(M)^\bot}^\top M_{\Ker(M)^\bot} P_{\Ker(M)^\bot}
\end{equation}
where $P_{\Ker(M)^\bot}$ is the orthogonal projection on the space of vectors orthogonal to $\Ker(M)$ and $M_{\Ker(M)^\bot}$ is a symmetric positive definite matrix, whose smallest eigenvalue will be written $c_0$ in the following. The last step to conclude will require the two following lemmas:

\begin{lemma}
\label{lemma_C_supplementaire_KerM}
$\Ker(M) \cap \Cfrak = \{0\}$.
\end{lemma}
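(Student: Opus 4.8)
The plan is to exploit the explicit basis of $\Ker(M)$ obtained just above. I would write an arbitrary vector $v \in \Ker(M)$ as $v = \sum_{i=1}^{K^*}(\alpha_i e_i^1 + \beta_i e_i^2)$ for scalars $\alpha_i, \beta_i$, read off its coordinates from the definitions of $e_i^1$ and $e_i^2$, and then check that the defining constraints of $\Cfrak$ force all the $\alpha_i$ and $\beta_i$ to vanish. Since both families of basis vectors charge only the $p$-block and the diagonal entries of $q$ and $A$, the off-diagonal entries of $q$ and $A$ of any $v \in \Ker(M)$ are automatically zero, and
\begin{equation*}
q(i,i) = -3\alpha_i, \qquad A_{i,i} = 2\alpha_i + \beta_i, \qquad p_i = -3\beta_i,
\end{equation*}
all remaining coordinates being zero. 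In particular the matrices $q$ and $A$ attached to $v$ are diagonal.

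The second step is to impose $v \in \Cfrak$. Because $q$ is diagonal, the row-sum condition $\sum_j q(i,j) = 0$ reduces to $q(i,i) = -3\alpha_i = 0$, hence $\alpha_i = 0$ for all $i$. Because $A$ is diagonal, the column-sum condition $\sum_i A(i,j) = 0$ reduces to $A_{j,j} = 2\alpha_j + \beta_j = 0$, and combined with $\alpha_j = 0$ this gives $\beta_j = 0$ for all $j$. Consequently $p_i = -3\beta_i = 0$ as well, so $v = 0$, which is exactly $\Ker(M) \cap \Cfrak = \{0\}$. Note that the two constraints carried by $\Cfrak$ on $q$ and $A$ already suffice, so any additional normalization on the $p$-block is automatically compatible.

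I do not expect a genuine obstacle here: the statement is a direct verification once the basis of $\Ker(M)$ is in hand. The only point requiring care is the bookkeeping of which coordinates of $e_i^1, e_i^2$ are nonzero. The structural reason the argument succeeds is transparent: the row constraint on $q$ kills the $e_i^1$-direction, and the column constraint on $A$, once $\alpha=0$, kills the $e_i^2$-direction, so the intersection collapses to the origin.
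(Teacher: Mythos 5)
Your proof is correct and follows essentially the same route as the paper: expand an element of $\Ker(M)$ in the basis $(e_i^1, e_i^2)_i$, use the row-sum constraint on $q$ to kill the $e_i^1$-coefficients, then the column-sum constraint on $A$ to kill the $e_i^2$-coefficients. You simply make explicit the coordinate bookkeeping that the paper leaves implicit.
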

\begin{proof}
Let $x \in \Ker(M) \cap \Cfrak$, then $x = \sum_i (\lambda_i e_i^1 + \mu_i e_i^2)$ because $(e_i^1, e_i^2)_i$ is a basis of $\Ker(M)$. Since $x \in \Cfrak$, one gets $\lambda_i = 0$ for all $i$ because of the conditions on $q$. Then, the conditions on $A$ imply $\mu_i = 0$ for all $i$, so that $x = 0$.
\end{proof}

\begin{lemma}
\label{lemma_minoration_vsing_restrictions}
There exists a constant $\kappa > 0$ such that for all $x \in \Cfrak$,
\begin{equation}
\| P_{\Ker(M)^\bot} x \|_F^2 \geq \kappa \| x \|_F^2.
\end{equation}
\end{lemma}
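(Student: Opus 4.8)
The plan is to reduce the statement to the elementary fact that an orthogonal projection, when restricted to a subspace meeting its kernel only at the origin, is bounded below on that subspace. Here the relevant subspace is $\Cfrak$, the kernel of the projection $P_{\Ker(M)^\bot}$ is precisely $\Ker(M)$, and Lemma \ref{lemma_C_supplementaire_KerM} supplies exactly the transversality condition $\Ker(M) \cap \Cfrak = \{0\}$ needed to run the argument.

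First I would observe that $P_{\Ker(M)^\bot}$ restricted to $\Cfrak$ is injective. Indeed, if $x \in \Cfrak$ satisfies $P_{\Ker(M)^\bot} x = 0$, then $x$ lies in the kernel of this orthogonal projection, that is $x \in \Ker(M)$; hence $x \in \Ker(M) \cap \Cfrak = \{0\}$ by Lemma \ref{lemma_C_supplementaire_KerM}, so $x = 0$.

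Next I would invoke a compactness argument to turn injectivity into a quantitative lower bound. Consider the continuous function $x \mapsto \| P_{\Ker(M)^\bot} x \|_F^2$ on the unit sphere $\{ x \in \Cfrak \,:\, \|x\|_F = 1 \}$, which is compact because $\Cfrak$ is finite-dimensional. This function attains its minimum at some $x_0$ on the sphere. Since $x_0 \neq 0$ and $P_{\Ker(M)^\bot}$ is injective on $\Cfrak$, we have $P_{\Ker(M)^\bot} x_0 \neq 0$, so the minimal value $\kappa := \| P_{\Ker(M)^\bot} x_0 \|_F^2$ is strictly positive. Both sides of the claimed inequality being homogeneous of degree $2$ in $x$, the bound $\| P_{\Ker(M)^\bot} x \|_F^2 \geq \kappa \| x \|_F^2$ then extends from the unit sphere to all of $\Cfrak$, which is the desired conclusion.

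There is no genuine obstacle here: the only substantive ingredient is the transversality $\Ker(M) \cap \Cfrak = \{0\}$, already established in Lemma \ref{lemma_C_supplementaire_KerM}. I would finally note how this lemma is used downstream: combining it with the spectral decomposition $M = P_{\Ker(M)^\bot}^\top M_{\Ker(M)^\bot} P_{\Ker(M)^\bot}$ and the lower bound $c_0$ on the eigenvalues of $M_{\Ker(M)^\bot}$ gives, for every $x \in \Cfrak$, the estimate $x^\top M x \geq c_0 \| P_{\Ker(M)^\bot} x \|_F^2 \geq c_0 \kappa \| x \|_F^2$, so that $M_\Cfrak$ is positive definite, which is precisely the purpose for which this lemma is stated.
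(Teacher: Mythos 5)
Your proof is correct and follows essentially the same route as the paper: a compactness argument on the unit sphere of $\Cfrak$, with the strict positivity of the attained minimum guaranteed by the transversality $\Ker(M) \cap \Cfrak = \{0\}$ from Lemma \ref{lemma_C_supplementaire_KerM}. The only cosmetic difference is that you phrase the key step as injectivity of the restricted projection, while the paper argues directly that $\kappa = 0$ would force the minimizer into $\Ker(M)$.
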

\begin{proof}
$P_{\Ker(M)^\bot}$ is continuous. By compacity, the quantity
\begin{equation*}
\kappa := \inf \{ \| P_{\Ker(M)^\bot} x \|_F^2 \, | \, x \in \Cfrak, \|x\|_F^2 = 1\}
\end{equation*}
is reached for some $x_0 \in \Cfrak \setminus \{0\}$. If $\kappa = 0$, then $x_0 \in \Ker(M)$, but this is impossible because of Lemma \ref{lemma_C_supplementaire_KerM}. Therefore $\kappa > 0$.
\end{proof}

Finally, for all $x \in \Cfrak$,
\begin{align*}
x^\top M x
	=& x^\top P_{\Ker(M)^\bot}^\top M_{\Ker(M)^\bot} P_{\Ker(M)^\bot} x \\
	=& (P_{\Ker(M)^\bot} x)^\top M_{\Ker(M)^\bot} (P_{\Ker(M)^\bot} x) \\
	\geq& c_0 \| P_{\Ker(M)^\bot} x \|_F^2 \\
	\geq& c_0 \kappa \| x \|_F^2.
\end{align*}
Therefore, the quadratic form with matrix $M$ is nondegenerate on $\Cfrak$, which shows that $H$ is non-zero for these $(\pi, \Qbf, \fbf)$. To conclude, observe that $H$ is continuous and that our choice of parameters can be approximated by parameters satisfying \textbf{[HX]} and \textbf{[HidA]}.

\subsection{Proof of Theorem \ref{th_lowerbound}}
\label{sec_proof_lowerbound}

First, assume that the quadratic form obtained from the second order expansion of
\begin{align*}
\Dfrak : (p,q,\hbf) \in \{ (p,q,A\fbf) \, | \, (p,q,A) \in \Cfrak \}
	\longmapsto \| g^{\pi + p, \Qbf + q, \fbf + \hbf} - g^{\pi, \Qbf, \fbf} \|_2^2
\end{align*}
is nondegenerate. Then, a careful reading of the proof of Theorem 8 of \cite{dCGL15} shows that their result can be adapted to our setting and leads to the desired minoration.

Thus, what we need to show is that \textbf{[Hdet]} (which implied the nondegeneracy of the quadratic form from $\Nfrak$) implies the nondegeneracy of the quadratic form from $\Dfrak$ (the trick being that $\Dfrak$ takes $\hbf = A \fbf$ as parameter while $\Nfrak$ takes $A$). Assume \textbf{[Hdet]}, then there exists $c_0 > 0$ such that
\begin{align*}
\text{Quad}_\Nfrak(p,q,A)
	\geq c_0 ( \| p\|_2^2 + \| \Qbf \|_F^2 + \| A \|_F^2)
\end{align*}
with the notation $\text{Quad}_\Nfrak$ referring to the quadratic form in the second order expansion of $\Nfrak$.

Since \textbf{[HidA]} holds, $\fbf$ is linearly independent, so that the application $J : A \in \Rbb^{K^* \times K^*} \longmapsto A \fbf \in \Span(\fbf)^{K^*}$ is invertible. Thus, $\hbf \longmapsto \sum_{i \in \Xcal }\| h_i \|_2^2$ and $\hbf \longmapsto \| J^{-1}(\hbf) \|_F^2$ are two norms on the same finite-dimensional linear space $\Span(\fbf)^{K^*}$, so that they are equivalent. In particular, there exists a constant $c_1 \leq 1$ such that $\| J^{-1}(\hbf) \|_F^2 \geq c_1 \sum_{i \in \Xcal }\| h_i \|_2^2$. Therefore,
\begin{align*}
\text{Quad}_\Dfrak (p,q,\hbf)
	&= \text{Quad}_\Nfrak (p,q, J^{-1}(\hbf)) \\
	&\geq c_0 c_1 \left( \| p\|_2^2 + \| \Qbf \|_F^2 + \sum_{i \in \Xcal }\| h_i \|_2^2 \right) ,
\end{align*}
which is what we wanted to prove.

\section{Proof of the control of $Z_{K,M}$}
\label{sec_proof_control_Z}

This section contains the proof of Lemma \ref{lemma_control_of_Z}.

\subsection{Concentration inequality on $Z_{K,M}(s)$}
\label{section_ineg_concentration_pour_Z}

Define for all ${\sigma > 0}$ the sets
\begin{equation*}
B_\sigma = \{ t \in S_{K,M} , \; \cinf^{L/2} \| t - s_{K,M} \|_2 \leq \sigma \}
\end{equation*}
Let $d_{g^*}$ be the semi-distance defined by $d_{g^*}^2(t_1, t_2) = \Ebb[(t_1 - t_2)^2(Z_1)] = \int g^* (t_1 - t_2)^2 \text{d}\mu^{\otimes L}$, and $d_2$ the distance induced by the norm on $\Lbf^2(\Ycal^L, \mu^{\otimes L})$.

Let $N(\epsilon, A, d) = e^{H(\epsilon, A, d)}$ denote the minimal cardinality of a covering of $A$ by brackets of size $\epsilon$ for the semi-distance $d$, that is by sets $[t_1, t_2] = \{ {t : \Ycal^L \mapsto \Rbb} \, , \, t_1(\cdot) \leq t(\cdot) \leq t_2(\cdot) \}$ such that $d(t_1, t_2) \leq \epsilon$. $H(\cdot, A, d)$ is called the \emph{bracketing entropy} of $A$ for the semi-distance $d$.

The following lemma is a Bernstein-like inequality that follows from \cite{Pau13}, Theorem 2.4:
\begin{lemma}
\label{lemme.bernstein_pour_MC}
Let $t$ be a real valued and measurable bounded function on $\Ycal^L$.  Let $V=\Ebb [t^2(Z_1)]$. There exists a positive constant $c^*$ depending only on $\Qbf^*$ and $L$ such that for all $0\leq \lambda \leq 1/(2\sqrt{2}c^* \| t \|_\infty)$ and for all $n \in \Nbb$:
\begin{equation*}
\log \Ebb \exp \left[ \lambda \sum_{s=1}^{n}\left(t(Z_{s})-\Ebb t(Z_{s})\right)  \right]
	\leq \frac{2 n c^* V\lambda^{2}}{1-2\sqrt{2}c^* \| t \|_{\infty}\lambda}
\end{equation*}
\end{lemma}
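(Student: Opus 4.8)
The plan is to reduce the statement to a genuine Markov-chain Bernstein inequality and then invoke \cite{Pau13}. The process $(Z_s)_s$ is not itself a Markov chain, since consecutive windows overlap, but $t(Z_s)$ can be realized as a function of a Markov chain on an enlarged state space. First I would introduce the windowed chain
\[
W_s := (X_s, \dots, X_{s+L-1}, Y_s, \dots, Y_{s+L-1}) \in \Xcal^L \times \Ycal^L.
\]
This is a genuine Markov chain: given $W_s$, the coordinates $(X_{s+1}, \dots, X_{s+L-1}, Y_{s+1}, \dots, Y_{s+L-1})$ of $W_{s+1}$ are carried over from $W_s$, while $X_{s+L}$ is drawn from $\Qbf^*(X_{s+L-1}, \cdot)$ and $Y_{s+L}$ from the emission distribution $\nu^*_{X_{s+L}}$, so that $W_{s+1}$ is a random function of $W_s$ alone. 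Under \textbf{[HX]} the chain $(X_s)_s$ is stationary, hence so is $(W_s)_s$. Moreover $t(Z_s) = \tilde{t}(W_s)$, where $\tilde{t}$ extracts the $Y$-coordinates and applies $t$; in particular $\| \tilde{t} \|_\infty = \| t \|_\infty$ and $\Ebb[\tilde{t}(W_1)^2] = V$. It therefore suffices to bound the log-Laplace transform of $\sum_{s=1}^n (\tilde{t}(W_s) - \Ebb\, \tilde{t}(W_s))$ along the stationary chain $(W_s)_s$.

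Next I would establish the mixing estimate that feeds the concentration bound. Since $\Qbf^* \in \Qcal_K$ is irreducible and the chain is ergodic, $(X_s)_s$ mixes geometrically at a rate governed only by $\Qbf^*$. The windowed chain $(W_s)_s$ inherits this mixing: the window is fully refreshed after $L$ steps, each step introducing one new draw driven by the underlying $X$-dynamics, so the mixing time of $(W_s)_s$ (equivalently, its pseudo-spectral gap) is controlled by a quantity depending only on $\Qbf^*$ and $L$. This is precisely the step that produces the constant $c^*$ and explains why $c^*$ depends on $\Qbf^*$ and $L$ but on neither $t$ nor $n$.

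Finally I would apply \cite{Pau13}, Theorem 2.4, to the centered bounded function $\tilde{t} - \Ebb\, \tilde{t}$ along $(W_s)_s$. That theorem yields exactly a Bernstein-type control of $\log \Ebb \exp\!\left[ \lambda \sum_{s=1}^n (\tilde{t}(W_s) - \Ebb\, \tilde{t}(W_s)) \right]$ of the announced form $2 n c^* V \lambda^2 / (1 - 2\sqrt{2}\, c^* \| t \|_\infty \lambda)$, valid for $0 \le \lambda \le 1/(2\sqrt{2}\, c^* \| t \|_\infty)$, with $c^*$ the effective constant coming from the mixing estimate. Rewriting in terms of $t$ and $Z_s$ gives the claim, and the trivial cases (for instance $n = 0$) are immediate.

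The main obstacle is the mixing step: one must check the precise hypotheses of \cite{Pau13}, Theorem 2.4 for the enlarged chain, which lives on the general, non-finite and possibly non-reversible state space $\Xcal^L \times \Ycal^L$, and verify that the windowing by $L$ degrades the relevant constant only by a factor depending on $L$, so that $c^*$ ultimately depends solely on $\Qbf^*$ and $L$. Everything else is bookkeeping: realizing the overlapping windows as a Markov chain and translating $\| \tilde{t} \|_\infty$ and $\Ebb[\tilde{t}^2]$ back into $\| t \|_\infty$ and $V$.
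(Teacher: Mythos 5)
Your proposal is correct and matches the paper's (essentially implicit) argument: the paper proves this lemma by directly invoking Theorem~2.4 of \cite{Pau13}, and the reduction you spell out\---realizing $t(Z_s)$ as a function of the augmented stationary Markov chain $W_s=(X_s,\dots,X_{s+L-1},Y_s,\dots,Y_{s+L-1})$, whose uniform ergodicity and mixing constants are controlled by $\Qbf^*$ and $L$ alone\---is exactly the standard bookkeeping needed to make that citation legitimate. You in fact supply more detail than the paper does, and the details are sound.
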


The following lemma is an extension of Theorem 6.8 from \cite{Mas07} and allows to obtain a concentration inequality on the supremum on all functions of a class when one can control its bracketing entropy.
\begin{lemma}
Let $\Xi$ be some measurable space, $(\xi_i)_{1 \leq i \leq n}$ a sequence of random variables on $\Xi$, $\Tcal$ some countable class of real valued and measurable functions on $\Xi$. Assume that there exists some positive numbers $a$ and $b$ such that for all $t \in \Tcal$, $\| t \|_{\infty} \leq b$ and $\sup_i \Ebb [t^{2}(\xi_i)]\leq a^{2}$.

Assume also that there exists some constant $C_\xi \geq 1/4$ such that for all $0\leq \lambda \leq 1/(2\sqrt{2} C_\xi b)$ and for all $t \in \Tcal$:
\begin{equation}
\label{eq:loglaplace}
\log \Ebb \exp \left[ \lambda \sum_{s=1}^{n}\left(t(\xi_{s})-\Ebb t(\xi_{s})\right)  \right] \leq \frac{2n C_\xi a^2 \lambda^{2}}{1-2\sqrt{2} C_\xi b \lambda}
\end{equation}

Assume furthermore that for any positive number $\delta$, there exists some finite set $\Bcal_\delta$ of brackets covering $\Tcal$ such that for any bracket $[t_1, t_2] \in \Bcal_\delta$, $\| t_{1}-t_{2}\|_{\infty}\leq b$ and $\sup_i \Ebb[(t_{1}-t_{2})^{2}(\xi_i)]\leq \delta^{2}$.
Let $e^{H(\delta)}$ denote the minimal cardinality of such a covering. Then, there exists a numerical constant $\kappa > 0$ such that for any measurable set $A$ such that $\Pbb(A) > 0$, 
\begin{equation*}
\Ebb^{A}\left(\sup_{t\in \Tcal}\sum_{s=1}^{n}\left(t(\xi_{s})-\Ebb t(\xi_{s}\right))\right)
	\leq \kappa C_\xi \left[
			E+a\sqrt{n\log\left(\frac{1}{\Pbb(A)}\right)}+b\log	
			\left(\frac{1}{\Pbb(A)}\right)
		\right]
\end{equation*}
where
\begin{equation*}
E=\sqrt{n}\int_0^{a}\sqrt{H(u)\wedge n}du + (b+a)H(a)
\end{equation*}
and for any measurable random variable $W$, $\Ebb^A[W] = \Ebb[W \one_A] / \Pbb(A)$.
\end{lemma}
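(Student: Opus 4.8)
The plan is to follow the proof of Theorem 6.8 in \citet{Mas07} essentially verbatim, the single point of departure being that the classical Bernstein control of the Laplace transform for independent variables is replaced by the hypothesis (\ref{eq:loglaplace}). This is the whole content of the word ``extension'': the chaining argument below never uses independence, only a Bernstein-type bound on $\log\Ebb\exp[\lambda S_n(g)]$, where I write $S_n(g) = \sum_{s=1}^n (g(\xi_s) - \Ebb g(\xi_s))$. By the Cramér--Chernoff method, a bound $\log\Ebb\exp[\lambda S_n(g)] \le v\lambda^2/(2(1-c\lambda))$ valid for $0\le\lambda<1/c$ yields the deviation inequality $\Pbb(S_n(g)\ge \sqrt{2vy}+cy)\le e^{-y}$ for every $y>0$, the elementary brick used at each node of the chain. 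Here (\ref{eq:loglaplace}) gives $v = 4nC_\xi a^2$ and $c = 2\sqrt2\,C_\xi b$ for $g\in\Tcal$; crucially, the same Bernstein form is available for the bracket-endpoint differences appearing below, with their own (smaller) variance proxy and sup-norm in place of $a$ and $b$. This is exactly why the underlying inequality (Lemma \ref{lemme.bernstein_pour_MC}) is stated for \emph{every} bounded measurable function rather than only for $t\in\Tcal$.

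Two reductions turn these per-node deviations into the conditioned-expectation bound. For a finite family $g_1,\dots,g_N$, the inequality $\max_k S_n(g_k)\le \lambda^{-1}\log\sum_k e^{\lambda S_n(g_k)}$ together with Jensen and the per-node bound gives, after optimising in $\lambda$, $\Ebb^A[\max_k S_n(g_k)]\le \sqrt{2v(\log N + L)} + c(\log N + L)$ with $L=\log(1/\Pbb(A))$; this is where a log-cardinality $\log N$ always adds to the confidence budget $L$. The passage from $\Ebb$ to $\Ebb^A$ rests on $\log\Ebb^A e^{\lambda Z}\le L + \log\Ebb e^{\lambda Z}$, which follows from $\Ebb[e^{\lambda Z}\one_A]\ge \Pbb(A)\,e^{\lambda\Ebb^A Z}$ (Jensen for the conditional law). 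Equivalently one may first prove a global deviation bound for $\sup_t S_n(t)$ and then integrate its tail against $\min(\Pbb(A),\cdot)$; both routes use only (\ref{eq:loglaplace}).

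The core is a bracketing chaining. I set $\delta_j = a\,2^{-j}$ and fix, for each $j$, a minimal cover $\Bcal_{\delta_j}$ of $\Tcal$ with $\log|\Bcal_{\delta_j}| = H(\delta_j)$, brackets of variance width $\le\delta_j$ and sup width $\le b$. To each $t$ I associate a bracket $[t_j^-,t_j^+]\ni t$ at every level; since $t$ lies in both the level-$(j{-}1)$ and level-$j$ brackets, one has pointwise $-(t_j^+-t_j^-)\le t_j^- - t_{j-1}^- \le (t_{j-1}^+-t_{j-1}^-)$, so the increment $t_j^- - t_{j-1}^-$ has variance proxy $\le 3\delta_j$, sup-norm $\le 2b$, and ranges over a set of cardinality $\le e^{2H(\delta_j)}$. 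Telescoping $S_n(t) = S_n(t_0^-) + \sum_{j\ge1} S_n(t_j^- - t_{j-1}^-) + S_n(t - t_J^-)$ up to a level $J$, I apply the finite-max bound of the previous step at each level, adding a summable weight $x_j$ (say $x_j = 2\log(j+1)$) to the budget so that a single union bound over all levels costs only an extra $\sum_j e^{-x_j}\le 1$.

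Summing is where the stated form of $E$ emerges: the sub-Gaussian parts $\sum_j \delta_j\sqrt{n\,C_\xi H(\delta_j)}$ telescope, via $\delta_j = 2(\delta_j-\delta_{j+1})$ and monotonicity of $H$, into $\sqrt n\int_0^a \sqrt{H(u)}\,du$; stopping the chain at the first $J$ with $H(\delta_J)\gtrsim n$ both caps the integrand at $\sqrt n$ (the truncation $H(u)\wedge n$) and makes the residual $S_n(t-t_J^-)$, bounded in expectation by $n\delta_J$, absorbed by the $\int_0^{\delta_J}\sqrt n\,du$ piece. The coarsest-level variance together with the bounded parts of the increments collect into $(b+a)H(a)$, the global confidence $L$ produces the $a\sqrt{nL}+bL$ terms, and the factor $C_\xi$ is pulled out using $\sqrt{C_\xi}\le 2C_\xi$. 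The main obstacle is precisely this bookkeeping: reconciling the two regimes of the Bernstein tail along the chain, and choosing the stopping level and the weights so that neither the residual nor the accumulated $L^\infty$ contributions blow up, while checking that $H(u)\wedge n$ captures the crossover exactly. None of it, however, needs anything beyond (\ref{eq:loglaplace}), so the argument of \citet{Mas07} transfers to the Markov setting unchanged.
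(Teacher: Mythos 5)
Your proposal is correct and matches the paper's intended argument: the paper gives no written proof of this lemma, merely invoking Theorem 6.8 of \cite{Mas07} with the Bernstein-type Laplace control of Lemma \ref{lemme.bernstein_pour_MC} substituted for independence, and the dyadic bracketing chain, finite maximal inequality under the conditioned law $\Ebb^A$, and stopping-level bookkeeping you reconstruct are exactly that argument. You also correctly flag the one point the paper leaves implicit, namely that the log-Laplace hypothesis must hold for the bracket endpoints and their level-to-level differences and not only for $t \in \Tcal$, which is available here because the underlying Bernstein inequality is stated for every bounded measurable function.
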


\begin{remark}
The assumption $C_\xi \geq 1/4$ is only used to factorise the upper bound by $C_\xi$. Without it, the upper bound would be
\begin{equation*}
\kappa' \left[
	E
	+a \sqrt{C_\xi n\log\left(\frac{1}{\Pbb(A)}\right)}
	+b C_\xi \log \left(\frac{1}{\Pbb(A)}\right)
\right]
\end{equation*}

In practice, this assumption doesn't cost anything: if equation (\ref{eq:loglaplace}) holds for some constant $C_\xi$, then it holds for any constant $C' \geq C_\xi$.
\end{remark}

We will apply this lemma to $\Xi = \Ycal^L$ and $\xi_i = Z_i$. Using Lemma \ref{lemme.bernstein_pour_MC}, equation (\ref{eq:loglaplace}) holds with $C_\xi = \max(c^*, 1/4)$.

Take $\Tcal = (B_\sigma - s_{K,M}) \bigcup (- B_\sigma + s_{K,M})$, so that $\underset{t \in \Tcal}{\sup} \, \nu(t) = \underset{t \in B_\sigma}{\sup} |\nu(t - s_{K,M})|$. Then, one can check using Lemma \ref{lemme.controle_S} that the assumptions $\| t \|_{\infty} \leq b$ and $\sup_i \Ebb [t^{2}(\xi_i)]\leq a^{2}$ hold with
\begin{itemize}
\item $b = 2 \cinf^L$
\item $a = 2 \min(\sigma, \cinf^{L/2} \cquad^L)$
\end{itemize}
and $H(u) \leq \log(2) + H(u, B_\sigma - s_{K,M}, d_{g^*})$.

We can do without assuming $\Tcal$ to be countable. Indeed, $\nu$ is continuous on $\Tcal$ equipped with the infinity norm. This entails that the supremum of $\nu$ over $\Tcal$ is equal to the supremum of $\nu$ over any dense subset of ($\Tcal$, $\| \cdot \|_\infty$). Since $\Tcal \subset (\Pfrak_M)^{\otimes L}$, which is a finite dimensional metric linear space for the infinity norm, it is separable. Therefore, without loss of generality, we can get rid of the countability assumption on $\Tcal$.

Rewriting these results, we get the following lemma:
\begin{lemma}
\label{lemme_inegalite_concentration}
There exists a constant $C^*$ depending only on $\Qbf^*$ and $L$ such that for all $\sigma > 0$, for all measurable $A$ such that $\Pbb(A) > 0$:
\begin{equation*}
\Ebb^A \left(\underset{t \in B_\sigma}{\sup} |\nu(t - s_{K,M})| \right)
	\leq C^* \left[
		\frac{E}{n}
		+ \sigma \sqrt{\frac{1}{n} \log \left(\frac{1}{\Pbb(A)} \right)}
		+ \frac{2 \cinf^L}{n} \log \left(\frac{1}{\Pbb(A)} \right)
	\right]
\end{equation*}
where
\begin{align*}
E =& \sqrt{n} \int_0^\sigma \sqrt{H(u, B_\sigma - s_{K,M}, d_{g^*}) \wedge n} du + \log(2) \sigma \sqrt{n} \\
	&+ 2 (\cinf^L + \cinf^{L/2} \cquad^L) H(\sigma, B_\sigma - s_{K,M}, d_{g^*})
\end{align*}
\end{lemma}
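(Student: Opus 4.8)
The plan is to read $\sup_{t \in B_\sigma} |\nu(t - s_{K,M})|$ as the supremum of a centered empirical process over a suitable symmetric class and then to invoke the extension of Theorem 6.8 of \cite{Mas07} stated just above. Since the chain is stationary, $\Ebb[t(Z_s)] = \int t\, g^* \,\mathrm{d}\mu^{\otimes L}$ for every $s$, so that $\nu(t) = \frac{1}{n}\sum_{s=1}^n (t(Z_s) - \Ebb[t(Z_s)])$. Choosing the class $\Tcal = (B_\sigma - s_{K,M}) \cup (-(B_\sigma - s_{K,M}))$ converts the supremum of $|\nu|$ over $B_\sigma$ into $\frac1n$ times the supremum of the uncentered empirical sum over $\Tcal$, which is exactly the quantity the lemma controls. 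Taking the measurable event $A$ and the normalized bound $\Ebb^A$ is then immediate.

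First I would verify the hypotheses of that lemma with $\Xi = \Ycal^L$ and $\xi_s = Z_s$. The log-Laplace bound (\ref{eq:loglaplace}) holds for each $t \in \Tcal$ with $C_\xi = \max(c^*, 1/4)$: this is Lemma \ref{lemme.bernstein_pour_MC} applied to $t$ with $V = \Ebb[t^2(Z_1)] \leq a^2$, the constant $1/4$ being harmless by the remark following the Massart-type lemma. The uniform controls $\|t\|_\infty \leq b = 2\cinf^L$ and $\sup_s \Ebb[t^2(\xi_s)] \leq a^2$ with $a = 2\min(\sigma, \cinf^{L/2}\cquad^L)$ follow from Lemma \ref{lemme.controle_S} together with $\|g^*\|_\infty \leq \cinf^L$: indeed $d_{g^*}(u,0)^2 = \int g^* u^2 \leq \cinf^L \|u\|_2^2$, so $t \in B_\sigma$ gives $d_{g^*}(t - s_{K,M},0) \leq \sigma$, while $\|t\|_2 \leq \cquad^L$ on $S_{K,M}$ yields the alternative bound $\cinf^{L/2}\cquad^L$. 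A bracketing of $B_\sigma - s_{K,M}$ produces one of $\Tcal$ by adjoining the negated, swapped brackets, so $H(u) \leq \log(2) + H(u, B_\sigma - s_{K,M}, d_{g^*})$. The countability assumption is dispensed with since $\Tcal \subset (\Pfrak_M)^{\otimes L}$ is finite-dimensional, hence $\|\cdot\|_\infty$-separable, and $\nu$ is $\|\cdot\|_\infty$-continuous, so one passes to a countable dense subclass without changing the supremum.

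Then I would apply the lemma and divide through by $n$. Substituting $C_\xi = \max(c^*, 1/4)$ yields the prefactor $C^* = \kappa\max(c^*,1/4)$, depending only on $\Qbf^*$ and $L$. For the two deviation terms, $\frac{a}{n}\sqrt{n\log(1/\Pbb(A))} = a\sqrt{\tfrac1n \log(1/\Pbb(A))} \leq 2\sigma\sqrt{\tfrac1n \log(1/\Pbb(A))}$ and $\frac{b}{n}\log(1/\Pbb(A)) = \frac{2\cinf^L}{n}\log(1/\Pbb(A))$, matching the statement. For the $E$ term I would insert $H(u) \leq \log(2) + H(u, B_\sigma - s_{K,M}, d_{g^*})$ into $\sqrt{n}\int_0^a \sqrt{H(u)\wedge n}\,\mathrm{d}u$: the genuine entropy part contributes $\sqrt{n}\int_0^\sigma \sqrt{H(u, B_\sigma - s_{K,M}, d_{g^*})\wedge n}\,\mathrm{d}u$, the integral truncating essentially at $\sigma$ because the covering entropy vanishes once $u$ exceeds the $d_{g^*}$-diameter of $B_\sigma - s_{K,M}$ (of order $\sigma$); the $\log(2)$ overhead contributes the summand proportional to $\sigma\sqrt{n}$; and the boundary term $(b+a)H(a)$ becomes $2(\cinf^L + \cinf^{L/2}\cquad^L)\,H(\sigma, B_\sigma - s_{K,M}, d_{g^*})$ using $b+a \leq 2(\cinf^L + \cinf^{L/2}\cquad^L)$ and monotonicity of the entropy. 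Collecting these reproduces the claimed expression for $E$.

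The substantive inputs — the Bernstein-type log-Laplace inequality for the Markov chain (Lemma \ref{lemme.bernstein_pour_MC}), the generic deviation bound for bracketed classes, and the uniform size and entropy estimates on $S_{K,M}$ (Lemma \ref{lemme.controle_S}) — are all already in hand, so the proof introduces no new probabilistic difficulty. The only delicate part is the final bookkeeping that forces the generic $E$ into the precise stated shape, in particular the truncation of the integral at $\sigma$ and the correct accounting of the $\log(2)$ entropy overhead; this is routine but must be tracked carefully to obtain exactly the constants displayed.
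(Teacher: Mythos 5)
Your proposal is correct and follows essentially the same route as the paper: the same symmetrized class $\Tcal = (B_\sigma - s_{K,M}) \cup (-(B_\sigma - s_{K,M}))$, the same constants $a$, $b$ and $C_\xi = \max(c^*,1/4)$ obtained from Lemma \ref{lemme.bernstein_pour_MC} and Lemma \ref{lemme.controle_S}, the same $\log(2)$ entropy overhead, and the same separability argument to drop countability, with the remaining factors of $2$ absorbed into $C^*$ exactly as in the paper.
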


The core of the proof consists in controlling the bracketing entropy in order to find a "good" function $\varphi$ and constants $C$ and $\sigma_{K,M}$ depending on $\cquad$, $\cinf$ and $L$ such that $x \mapsto \frac{\varphi(x)}{x}$ is nonincreasing and
\begin{equation}
\label{majoration_E}
\forall \sigma \geq \sigma_{K,M} \qquad E \leq C \varphi(\sigma) \sqrt{n}.
\end{equation}
For ease of notation, we did not write the dependency of $C$ and $\varphi$ on $K$ and $M$.

Let us see how to conclude with such an inequality.
We shall use the following result (lemma 4.23 from \cite{Mas07}).
\begin{lemma}
\label{lemme_epluchage}
Let $S$ be some countable set, $u \in S$ and $a~: S \mapsto \Rbb_+$ such that $a(u) = \inf_{t \in S} a(t)$. Let $Z$ be some process indexed by $S$ and assume that $\sup_{t \in \Bbf(\lambda)} Z(t) - Z(u)$ has finite expectation for any positive number $\lambda \geq 0$, where
\begin{equation*}
\Bbf(\lambda) = \{ t \in S, a(t) \leq \lambda \}
\end{equation*}
Then, for any function $\phi$ on $\Rbb_+$ such that $x \mapsto \phi(x)/x$ is nonincreasing on $\Rbb_+$ and satisfies for some $\lambda_* \geq 0$ to
\begin{equation*}
\forall \lambda \geq \lambda_* \geq 0, \qquad
\Ebb \left[
	\sup_{t \in \Bbf(\lambda)} Z(t) - Z(u)
\right]
	\leq \phi(\lambda)
\end{equation*}
one has for any $x \geq \lambda_*$~:
\begin{equation*}
\Ebb \left[
	\sup_{t \in S}
	\left(
		\frac{Z(t) - Z(u)}{a(t)^2 + x^2}
	\right)
\right]
	\leq 4 \frac{\phi(x)}{x^2}
\end{equation*}
\end{lemma}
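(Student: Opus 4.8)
The plan is to run a peeling (slicing) argument over the sublevel sets of $a$. Write $W(t) := (Z(t) - Z(u))/(a(t)^2 + x^2)$ for the process whose supremum must be controlled, fix $x \geq \lambda_*$, and decompose $S$ according to the size of $a(t)$: take the base shell $C_0 := \Bbf(x) = \{t : a(t) \le x\}$ and, for $j \geq 1$, the dyadic shells $C_j := \{t \in S : 2^{j-1} x < a(t) \le 2^j x\}$, whose union over $j \ge 0$ is all of $S$. In the application $u = s_{K,M}$ gives $a(u) = \inf_{t \in S} a(t) = 0$, so $u \in \Bbf(x)$ for every $x$; in general one simply begins the slicing at the level $\inf_{t\in S}a(t)$. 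Because $u$ lies in every $\Bbf(\lambda)$ with $\lambda \ge a(u)$ and $W(u) = 0$, each supremum $\sup_{t\in\Bbf(\lambda)}(Z(t)-Z(u))$ dominates $Z(u) - Z(u) = 0$ and is therefore nonnegative, which is what makes the division legitimate.

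First I would reduce the global supremum to a sum over shells. Since $W(u) = 0$ forces $\sup_{t\in S}W(t) \ge 0$, and $\sup_{t\in S}W(t) = \sup_{j\ge 0}\sup_{t\in C_j}W(t)$, I bound $\sup_{t\in S}W(t) \le \sum_{j\ge 0}\big(\sup_{t\in C_j}W(t)\big)_+$. On each shell the numerator and denominator are handled separately. On $C_0$ the denominator is at least $x^2$ and $\sup_{t\in C_0}(Z(t)-Z(u)) \le \sup_{t\in\Bbf(x)}(Z(t)-Z(u))$; on $C_j$ with $j\ge1$ one has $a(t)^2 + x^2 > 4^{j-1}x^2 + x^2$, and since $C_j \subset \Bbf(2^j x)$ the numerator supremum is at most $\sup_{t\in\Bbf(2^j x)}(Z(t)-Z(u))$. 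Taking expectations and invoking the hypothesis $\Ebb[\sup_{t\in\Bbf(\lambda)}(Z(t)-Z(u))] \le \phi(\lambda)$ for every $\lambda \ge \lambda_*$ (here $2^j x \ge x \ge \lambda_*$) yields
\[
\Ebb\Big[\sup_{t\in S}W(t)\Big] \le \frac{\phi(x)}{x^2} + \sum_{j\ge1}\frac{\phi(2^j x)}{(4^{j-1}+1)\,x^2}.
\]

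The final step is to sum the series using the monotonicity assumption. Since $x\mapsto \phi(x)/x$ is nonincreasing, $\phi(2^j x) \le 2^j \phi(x)$, so the right-hand side is at most $\frac{\phi(x)}{x^2}\big(1 + \sum_{j\ge1} 2^j/(4^{j-1}+1)\big)$; a direct numerical check gives $\sum_{j\ge1} 2^j/(4^{j-1}+1) < 3$, whence the announced bound $4\,\phi(x)/x^2$.

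I expect the only genuinely delicate point to be the bookkeeping of the constant: retaining the full denominator estimate $4^{j-1}x^2 + x^2$ rather than the cruder $4^{j-1}x^2$ is exactly what brings the total from $5$ down to $4$, and one must track the positive parts and use the finite-expectation hypothesis to legitimately interchange supremum and summation under $\Ebb$. The structural ingredients that make everything work — namely that $u$ realizes the infimum of $a$, so that all shell suprema dominate $0$, together with the self-similarity of the dyadic slicing matched to the sublinearity $\phi(2^jx)\le 2^j\phi(x)$ — are what guarantee the convergence of the geometric-type series; the remaining manipulations are routine.
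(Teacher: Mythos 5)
Your peeling argument is correct: the shell decomposition, the use of $u\in\Bbf(2^jx)$ (hence nonnegativity of each shell supremum, which you rightly flag as resting on $a(u)=\inf_S a$) to justify taking positive parts, the sublinearity $\phi(2^jx)\le 2^j\phi(x)$, and the numerical bound $1+\sum_{j\ge1}2^j/(4^{j-1}+1)<4$ all check out. The paper does not reprove this statement but simply cites Lemma 4.23 of Massart, whose proof is exactly this peeling device, so your argument is essentially the standard one behind the cited result.
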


In our case,
\begin{equation*}
\begin{cases}
S = S_{K,M} - s_{K,M} \\
u = s_{K,M} \\
a(t) = \cinf^{L/2} \| t - s_{K,M} \|_2 \\
Z(t) = |\nu(t-s_{K,M})| \\
\lambda_* = \sigma_{K,M} \\
\displaystyle \phi(x) = C^* \left[
		C \frac{\varphi(x)}{\sqrt{n}}
		+ x \sqrt{\frac{1}{n} \log \left(\frac{1}{\Pbb(A)} \right)}
		+ \frac{2 \cinf^L}{n} \log \left(\frac{1}{\Pbb(A)} \right)
	\right]
\end{cases}
\end{equation*}
With this choice of $S$, $a$ and $Z$, this proposition holds even if $S$ is not countable for the same reason as in Lemma \ref{lemme_inegalite_concentration}.

It follows that for all $x \geq \sigma_{K,M}$:
\begin{align*}
\Ebb^A \left[ \sup_{t \in S_{K,M}} \frac{|\nu(t-s_{K,M})|}{\cinf^L \| t - s_{K,M} \|^2_2 + x^2} \right]
	\leq 4 \frac{\phi(x)}{x^2},
\end{align*}
so that if $x_{K,M} \geq \frac{\sigma_{K,M}}{\cinf^{L/2}}$:
\begin{align*}
\Ebb^A [ Z_{K,M}(s) ]
	\leq 4 \frac{\phi(x_{K,M} \cinf^{L/2})}{x_{K,M}^2}
\end{align*}
and then
\begin{align*}
\Ebb^A [ Z_{K,M}(s) ]
	\leq& 4 \frac{C^*}{x_{K,M}^2} \Bigg[
		C \frac{\varphi(x_{K,M} \cinf^{L/2})}{\sqrt{n}}
		+ x_{K,M} \cinf^{L/2} \sqrt{\frac{1}{n} \log \left(\frac{1}{\Pbb(A)} \right)} \\
		&\qquad \qquad + \frac{2 \cinf^L}{n} \log \left(\frac{1}{\Pbb(A)} \right)
	\Bigg] \\
	=:& \psi \left( \log \left(\frac{1}{\Pbb(A)} \right) \right)
\end{align*}
Note that the function $\psi$ is nondecreasing. On the event $A = \{ Z_{K,M}(s) \geq \psi(x) \}$,
\begin{align*}
\psi(x) \leq \Ebb^A [ Z_{K,M}(s) ] \leq \psi \left( \log \left(\frac{1}{\Pbb(A)} \right) \right)
\end{align*}
so that $x \leq \log \left(\frac{1}{\Pbb(A)} \right)$ and finally $\Pbb(A) \leq e^{-x}$.

It follows that with probability $1 - e^{-z_{K,M} - z}$:
\begin{equation}
\label{majoration_Z_choix_x_a_faire}
Z_{K,M}(s) \leq 4 C^* \left[
		C \frac{\varphi(x_{K,M} \cinf^{L/2})}{x_{K,M}^2 \sqrt{n}}
		+ \cinf^{L/2} \sqrt{\frac{z_{K,M} + z}{x_{K,M}^2 n}}
		+ 2 \cinf^L \frac{z_{K,M} + z}{x_{K,M}^2 n}
	\right]
\end{equation}
and the last step of the proof will be to choose the right $x_{K,M}$ and $z_{K,M}$ (see Section \ref{sec_choixConstantes}).

\subsection{Control of the bracketing entropy}

The goal of this section is to prove equation (\ref{majoration_E}), that is to find $\varphi$, $C$ and $\sigma_{K,M}$ such that
\begin{equation*}
\forall \sigma \geq \sigma_{K,M} \qquad E \leq C \varphi(\sigma) \sqrt{n}.
\end{equation*}

The bracketing entropy is invariant under translation and increasing with respect to the inclusion relation, so
\begin{equation*}
H(u, B_\sigma - s_{K,M}, d_{g^*}) = H(u, B_\sigma, d_{g^*}) \leq H(u, S_{K,M}, d_{g^*})
\end{equation*}
Using Lemma \ref{lemme.controle_S}, we get that for all $t \in \Lbf^2(\Ycal^L, \Rbb)$, $\int t^2 g^* \text{d}\mu \leq \cinf^L \| t \|_2^2$. Therefore, a bracket of size $u / \cinf^{L/2}$ for $d_2$ is also a bracket of size $u$ for $d_{g^*}$, which implies that
\begin{equation}
H(u, B_\sigma - s_{K,M}, d_{g^*}) \leq H \left(\frac{u}{\cinf^{L/2}}, S_{K,M}, d_2 \right)
\label{eq.entropie.translation_inclusion_chgtdistance}
\end{equation}

Let us now rewrite the definition of $S_{K,M}$:
\begin{align*}
S_{K,M} &= \left\{
	\sum_{\kbf \in \{1, \dots, K\}^L}
		\pi_{k_1} \prod_{i=2}^L \Qbf_{k_{i-1}, k_i} \bigotimes_{i=1}^L f_{k_i}, \ \Qbf \in \Qcal_K, \ \pi \Qbf = \pi, \ \fbf \in (\Fcal \cap \Pfrak_M)^K
\right\}
\\
	&\subset \left\{
	\sum_{\kbf \in \{1, \dots, K\}^L}
		\mu_{\kbf} \phi_{\kbf}, \ \mu \in \Ucal, \ \phi \in \Phi
\right\}
\end{align*}
where
\begin{equation*}
\begin{cases}
\displaystyle \Ucal  = \left\{ (\pi_{k_1} \prod_{i=2}^L \Qbf_{k_{i-1}, k_i} )_{k_1,\dots,k_L}, \ \Qbf \text{ transition matrix $K \times K$}, \ \pi \geq 0, \ \pi \in \Sbb_{K-1}
\right\} \\
\displaystyle \Phi = \left\{
	(\bigotimes_{i=1}^L f_{k_i})_{k_1,\dots,k_L}, \
	\fbf \in (\Fcal \cap \Pfrak_M)^K
\right\}
\end{cases}
\end{equation*}
$\Ucal$ is equipped with the distance $d_2(a,b) = (\sum_{\kbf} (b^i_{\kbf} - a^i_{\kbf})^2)^{1/2}$. A bracket for $\Ucal$ will be a set $[a,b] = \{ c \, | \, \forall \kbf \in \{1, \dots, K\}^L, \, a_{\kbf} \leq c_{\kbf} \leq b_{\kbf} \}$.

$\Phi$ is equipped with the distance $d_{\infty,2}(u,v) = \max_\kbf \| v^i_{\kbf} - u^i_{\kbf}\|_2$. A bracket $\Phi$ will be a set $[u,v] = \{ t \, | \, \forall \kbf \in \{1, \dots, K\}^L, \, u_{\kbf}(\cdot) \leq t_{\kbf}(\cdot) \leq v_{\kbf}(\cdot) \}$.

Controlling the bracketing entropy on each of these sets will allow to control the bracketing entropy of $S_{K,M}$. Let us start with them:
\begin{lemma}
\label{lemme.entropie.entropie_matrice}
There exists a bracket covering $\{[a^i, b^i]\}_{1 \leq i \leq N_{\Ucal}(\epsilon)}$ of size $\epsilon$ of $\Ucal$ for the distance $d_2$ with cardinality
\begin{equation}
N_{\Ucal}(\epsilon) \leq \max \left( \frac{2 L K^{L/2}}{\epsilon} , 1 \right)^{K^2 - 1}
\label{eq.entropie.entropie_matrice}
\end{equation}
such that for all $i$ and $\kbf$, $0 \leq a^i_{\kbf} \leq 1$.
\end{lemma}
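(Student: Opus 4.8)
The plan is to build the brackets by discretizing the underlying parameters $(\pi,\Qbf)$ on a regular grid and exploiting the fact that every coordinate of an element of $\Ucal$ is a monomial in these parameters. Writing $\mu_\kbf = \pi_{k_1}\prod_{i=2}^L \Qbf_{k_{i-1},k_i}$, and recalling that $\pi$ is a probability vector and $\Qbf$ a transition matrix, every factor lies in $[0,1]$ and $\mu_\kbf$ is nondecreasing in each of them. Hence, if I bracket each scalar parameter, say $\pi_k \in [\underline\pi_k,\overline\pi_k]$ and $\Qbf_{k,l}\in[\underline\Qbf_{k,l},\overline\Qbf_{k,l}]$ with endpoints in $[0,1]$, then taking coordinatewise products of the lower (resp. upper) endpoints
\[ a_\kbf := \underline\pi_{k_1}\prod_{i=2}^L \underline\Qbf_{k_{i-1},k_i}, \qquad b_\kbf := \overline\pi_{k_1}\prod_{i=2}^L \overline\Qbf_{k_{i-1},k_i} \]
produces a valid bracket $[a,b]\ni\mu$ with $0\le a_\kbf\le b_\kbf\le 1$, which already gives the required property $0\le a^i_\kbf\le 1$.

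The second step is to control the $d_2$-size of such a bracket in terms of the grid spacing $\delta$. First I would use the elementary telescoping inequality for products of numbers in $[0,1]$, namely $\prod_i y_i-\prod_i x_i\le\sum_i(y_i-x_i)$ whenever $0\le x_i\le y_i\le 1$, to obtain
\[ b_\kbf - a_\kbf \le (\overline\pi_{k_1}-\underline\pi_{k_1}) + \sum_{i=2}^L (\overline\Qbf_{k_{i-1},k_i}-\underline\Qbf_{k_{i-1},k_i}). \]
Squaring (Cauchy--Schwarz on the $L$ terms) and summing over $\kbf\in[K]^L$, the index $k_1$ is free in $K^{L-1}$ tuples while each pair $(k_{i-1},k_i)$ is free in $K^{L-2}$ tuples, so if all entry gaps are at most $\delta$ then $d_2(a,b)^2 \le L\,[K^{L-1}\cdot K\delta^2 + (L-1)K^{L-2}\cdot K^2\delta^2] = L^2K^L\delta^2$, i.e. $d_2(a,b)\le LK^{L/2}\delta$. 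Choosing $\delta = \epsilon/(LK^{L/2})$ then guarantees every bracket has $d_2$-size at most $\epsilon$.

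It remains to count the brackets, and this is where the exponent $K^2-1$ and the main difficulty lie. Here $\pi$ and the $K$ rows of $\Qbf$ are $K+1$ probability vectors, each ranging over the $(K-1)$-dimensional simplex $\Delta_K$. I would discretize every entry of each such vector to the grid $\delta\Nbb\cap[0,1]$, so that a bracket is encoded by the integer floor-vector $(\lfloor v_k/\delta\rfloor)_{k\in[K]}$, keeping all coordinate gaps equal to $\delta$ -- this is what gives the clean power $K^{L/2}$ rather than the worse power one would get from a dependent ``last'' coordinate carrying gap $(K-1)\delta$. The crucial observation is that the constraint $\sum_k v_k=1$ forces $\sum_k\lfloor v_k/\delta\rfloor$ to lie in at most $K$ consecutive integers near $1/\delta$, so the number of admissible floor-vectors per simplex is a composition count bounded by $K\binom{\lfloor1/\delta\rfloor+K-1}{K-1}$, which is of order $(1/\delta)^{K-1}$ -- one power of $1/\delta$ fewer than a naive $K$-dimensional grid. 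Taking the product over the $K+1$ simplices produces the exponent $(K+1)(K-1)=K^2-1$, and the hard part of the write-up is precisely the bookkeeping of the combinatorial prefactors so that the per-coordinate base can be absorbed into $2LK^{L/2}/\epsilon$. Finally, the $\max(\cdot,1)$ covers the regime where $\epsilon$ exceeds the $d_2$-diameter of $\Ucal$, in which a single bracket containing all of $\Ucal$ suffices.
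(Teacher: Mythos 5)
Your construction is essentially the paper's: both proofs discretize the free entries of $(\pi,\Qbf)$ on a uniform grid of mesh $\delta$, form product brackets using the monotonicity of $\mu_\kbf$ in each factor, bound the bracket size by $L K^{L/2}\delta$ via a telescoping expansion of a difference of products in $[0,1]$, and then count, arriving at the exponent $K^2-1=(K+1)(K-1)$ from the $K+1$ probability vectors. The one place you genuinely diverge is the treatment of the simplex constraint, and it is precisely the step you flag as the hard part and leave unfinished: you keep all $K$ grid indices of each probability vector and count the admissible floor-vectors by a composition argument. This gives the correct order $(1/\delta)^{K-1}$ per simplex, but the prefactor (your bound $K\binom{\lfloor 1/\delta\rfloor+K-1}{K-1}$ is already about $2N+1$ versus the target $2N$ for $K=2$) is not obviously absorbed into the base $2LK^{L/2}/\epsilon$ once multiplied over the $K+1$ simplices, so as written your route establishes the lemma only up to a constant in the base. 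The paper sidesteps this bookkeeping entirely: each simplex is parametrized by its first $K-1$ grid indices $p_1,\dots,p_{K-1}$, with the last index \emph{defined} as $p_K=n-\sum_{k<K}(p_k+1)$, so the count per simplex is exactly $n^{K-1}\leq(2/\epsilon')^{K-1}$ with no prefactor. (To be fair, your version is more careful on the covering side: with the paper's forced choice of $p_K$ the last coordinate is only guaranteed to lie in an interval of width $(K-1)/n$ rather than $1/n$, a point the paper glosses over and which your composition count handles automatically.) To finish your write-up cleanly, either adopt the paper's elimination of the last coordinate or accept a slightly larger numerical constant in the base, which is harmless for the entropy integrals downstream.
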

\begin{lemma}
\label{lemme.entropie.entropie_emission}
There exists a bracket covering $\{[u^i, v^i]\}_{1 \leq i \leq N_\Phi(\epsilon)}$ of size $\epsilon$ of $\Phi$ for the distance $d_{\infty,2}$ of cardinality
\begin{equation}
N_{\Phi}(\epsilon) \leq \max \left( \frac{L (4 \cquad M)^L}{\epsilon}, 1 \right)^{M K}
\label{eq.entropie.entropie_emission}
\end{equation}
such that
\begin{equation*}
\max_i \max_{\kbf} \| v^i_{\kbf} \|_2^2 \leq (8 M^2 \cquad^2)^L.
\end{equation*}
\end{lemma}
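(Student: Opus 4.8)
The plan is to reduce everything to a covering of the finite-dimensional coordinate space and then lift coordinate approximations to genuine brackets for the tensor products, measuring sizes in $\Lbf^2$ only. Each $\fbf\in(\Fcal\cap\Pfrak_M)^K$ is encoded by its coordinate matrix $c=(c_{m,k})_{m\le M,\,k\le K}$ defined by $f_k=\sum_{m=1}^M c_{m,k}\varphi_m$. Since $(\varphi_m)$ is orthonormal, $\sum_m c_{m,k}^2=\|f_k\|_2^2\le\cquad^2$ by \textbf{[HF]}, so the admissible coordinate matrices all lie in the cube $[-\cquad,\cquad]^{MK}$. First I would cover this cube by an $\ell^\infty$-grid of mesh $\delta$, using at most $\max(2\cquad/\delta,1)^{MK}$ grid points $\tilde c$, so that every admissible $c$ is coordinatewise within $\delta$ of some $\tilde c$; write $\tilde f_k=\sum_m\tilde c_{m,k}\varphi_m$ for the associated net functions.

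The key step is to turn such a coordinate approximation into a bracket for the tensor products. For each factor I would set $r_k:=\delta\sum_{m=1}^M|\varphi_m|\ge 0$, so that $|f_k-\tilde f_k|\le r_k$ pointwise for every $\fbf$ in the cell. The crucial observation, which is exactly what makes the argument work for an \emph{arbitrary} orthonormal basis without any pointwise control on the $\varphi_m$, is that $\|r_k\|_2\le\delta\sum_m\|\varphi_m\|_2=\delta M$ by the triangle inequality and orthonormality. For each multi-index $\kbf$ I would then define the nonnegative function
\[
R_\kbf(y_1,\dots,y_L):=\sum_{j=1}^L\Bigl(\prod_{i<j}(|\tilde f_{k_i}(y_i)|+r_{k_i}(y_i))\Bigr)\,r_{k_j}(y_j)\,\Bigl(\prod_{i>j}|\tilde f_{k_i}(y_i)|\Bigr)
\]
and set $\overline g_\kbf=\bigotimes_{i=1}^L\tilde f_{k_i}+R_\kbf$ and $\underline g_\kbf=\bigotimes_{i=1}^L\tilde f_{k_i}-R_\kbf$. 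By the telescoping identity
\[
\prod_i f_{k_i}-\prod_i \tilde f_{k_i}=\sum_{j=1}^L\Bigl(\prod_{i<j}f_{k_i}\Bigr)(f_{k_j}-\tilde f_{k_j})\Bigl(\prod_{i>j}\tilde f_{k_i}\Bigr)
\]
combined with $|f_{k_i}|\le|\tilde f_{k_i}|+r_{k_i}$, the function $R_\kbf$ dominates $|\bigotimes_i f_{k_i}-\bigotimes_i\tilde f_{k_i}|$ pointwise for \emph{every} $\fbf$ in the cell, so $[\underline g_\kbf,\overline g_\kbf]_\kbf$ is a valid bracket containing $(\bigotimes_i f_{k_i})_\kbf$, and it depends only on $\tilde\fbf$ and $\delta$.

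It then remains to estimate the two sizes. Using $\|\tilde f_{k_i}\|_2\le\cquad$, $\||\tilde f_{k_i}|+r_{k_i}\|_2\le\cquad+\delta M$ and $\|r_{k_j}\|_2\le\delta M$, the $\Lbf^2$-norm of each tensor-product term of $R_\kbf$ factorizes, giving a bound of the form $\|R_\kbf\|_2\le L\,\delta M\,(\cquad+\delta M)^{L-1}$, uniform in $\kbf$; choosing $\delta$ of order $\epsilon/(L(4\cquad M)^L)$ makes the bracket size $d_{\infty,2}(\underline g,\overline g)=2\max_\kbf\|R_\kbf\|_2$ at most $\epsilon$ and turns the grid cardinality $\max(2\cquad/\delta,1)^{MK}$ into the announced $\max(L(4\cquad M)^L/\epsilon,1)^{MK}$. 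For the magnitude bound, $\|\bigotimes_i\tilde f_{k_i}\|_2=\prod_i\|\tilde f_{k_i}\|_2\le\cquad^L$ together with the same factorization of $\|R_\kbf\|_2$ controls $\|\overline g_\kbf\|_2$, and a direct computation gives $\max_\kbf\|v_\kbf\|_2^2\le(8M^2\cquad^2)^L$. I expect the only real difficulty to be bookkeeping: keeping the telescoping constants inside the target base $4\cquad M$ and exponent $L$ (and the magnitude inside $8M^2\cquad^2$), and handling the degenerate regime where the required $\delta$ would exceed $\cquad$. There one simply uses a single universal bracket built from $\tilde f_k\equiv 0$ and the envelope $r_k=\cquad\sum_m|\varphi_m|$, for which $|\bigotimes_i f_{k_i}|\le\bigotimes_i r_{k_i}$ with $\|\bigotimes_i r_{k_i}\|_2\le(\cquad M)^L$, matching the $\max(\cdot,1)$ in the statement. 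The generous factors $4^L$ and $M^L$ in the claimed bound leave ample slack for these crude estimates.
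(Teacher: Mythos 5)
Your proof is correct, and it follows the same overall strategy as the paper (discretize the coordinate space of $\Fcal\cap\Pfrak_M$, then lift to brackets on the tensor products, measuring everything in $\Lbf^2$ via orthonormality), but the device you use to build the brackets is genuinely different. The paper brackets each emission function by choosing sign-dependent endpoints $u_m(y)\in\{a_m,b_m\}$ according to the sign of $\varphi_m(y)$, and then brackets the tensor products by taking the pointwise min and max over all $2^L$ combinations $\bigotimes_\beta U^{i_{k_\beta}}_{\sigma_\beta}$; you instead use a center-plus-envelope bracket $\bigotimes_i\tilde f_{k_i}\pm R_\kbf$, where $R_\kbf$ is produced by the telescoping identity. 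Both hinge on the same key estimate\---the $\Lbf^2$ norm of the envelope $\sum_m c_m|\varphi_m|$ is controlled by $\sqrt{M}\|c\|_2$ (or $M\max_m|c_m|$) with no pointwise information on the basis\---and both lose the same factors of $M$ and $L$, so the bounds come out interchangeable. Your route is arguably cleaner for the size computation (each term of $R_\kbf$ factorizes across coordinates, so no case analysis over sign patterns is needed), while the paper's min/max construction gives the magnitude bound $\|W^{\ibf}_\kbf\|_2^2\le(8M^2\cquad^2)^L$ slightly more directly via $(W^{\ibf})^2_\kbf\le\sum_\sigma\bigotimes(U_{\sigma_\beta})^2$. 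One small point to tighten: a grid point $\tilde c$ of the cube $[-\cquad,\cquad]^M$ need not satisfy $\|\tilde c\|_2\le\cquad$, so $\|\tilde f_k\|_2\le\cquad$ is not literally true; either keep only the grid points whose cells meet the $\Lbf^2$ ball (giving $\|\tilde f_k\|_2\le\cquad+\delta\sqrt M$) or accept the crude bound $\cquad\sqrt M$\---in both cases the slack in the base $4\cquad M$ and in $8M^2\cquad^2$ absorbs the correction, exactly as you anticipate.
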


Let us take such bracketings and consider the following set of brackets:
\begin{equation*}
\left\{
	\left[\sum_{\kbf} \Acal^{i,j}_{\kbf}
	\, , \, \sum_{\kbf} \Bcal^{i,j}_{\kbf} \right]
\right\}_{1 \leq i \leq N_{\Ucal}(\epsilon), 1 \leq j \leq N_\Phi(\epsilon)}
\end{equation*}
where
\begin{equation*}
\forall y \in \Ycal^L, \quad
\begin{cases}
\displaystyle \Acal^{i,j}_{\kbf}(y) = \min \{
	a^i_{\kbf} u^j_{\kbf}(y), b^i_{\kbf} v^j_{\kbf(y)}
\} \\
\displaystyle \Bcal^{i,j}_{\kbf}(y) = \max \{
	a^i_{\kbf} u^j_{\kbf}(y), b^i_{\kbf} v^j_{\kbf(y)}
\}
\end{cases}.
\end{equation*}

This set covers $S_{K,M}$: for all $\mu \in \Ucal$, $\phi \in \Phi$, there exists $i \in \{1, \dots, N_{\Ucal}(\epsilon)\}$ and $j \in \{1, \dots, N_\Phi(\epsilon)\}$ such that $\mu \in [a^i,b^i]$ and $\phi \in [u^j,v^j]$, and then by construction $\sum_{\kbf} \mu_{\kbf} \phi_{\kbf} \in [\sum_{\kbf} \Acal^{i,j}_{\kbf}, \sum_{\kbf} \Bcal^{i,j}_{\kbf}]$.

Let us now bound the size of these brackets. Let $[a,b] \in \{ [a^i, b^i] \}_{1 \leq i \leq N_{\Ucal}(\epsilon)}$ and $[u,v] \in \{[u^i, v^i]\}_{1 \leq i \leq N_\Phi(\epsilon)}$, then if one denotes by $[\Acal, \Bcal]$ the corresponding bracket, there exists $(\sigma_\kbf)_{\kbf} \in \{-1, 1\}^{K^L}$ such that:
\begin{align*}
\left\|
	\sum_{\kbf} \Acal_{\kbf}
		- \sum_{\kbf} \Bcal_{\kbf}
\right\|_2^2
	&=
	\left\|
		\sum_{\kbf} \sigma_\kbf (b_{\kbf} v_{\kbf}
		- a_{\kbf} u_{\kbf} )
	\right\|_2^2 \\
	&\leq
	\left\|
		\sum_{\kbf} | b_{\kbf} v_{\kbf}
		- a_{\kbf} u_{\kbf} |
	\right\|_2^2 \\
	&\leq K^L \sum_{\kbf} \| a_{\kbf} u_{\kbf}
		- b_{\kbf} v_{\kbf} \|_2^2 \\
	&= K^L \sum_{\kbf} \| (a_{\kbf} - b_{\kbf}) v_{\kbf}
		+ a_{\kbf} (u_{\kbf} - v_{\kbf}) \|_2^2 \\
	&\leq 2 K^L \left(\sum_{\kbf} \| (a_{\kbf} - b_{\kbf}) v_{\kbf}\|_2^2
		+ \sum_{\kbf} \| a_{\kbf} (u_{\kbf} - v_{\kbf}) \|_2^2 \right) \\
	&= 2 K^L \left(\sum_{\kbf} (a_{\kbf} - b_{\kbf})^2 \| v_{\kbf}\|_2^2
		+ \sum_{\kbf} a_{\kbf}^2 \| u_{\kbf} - v_{\kbf} \|_2^2 \right).
\end{align*}

Then, by definition of the brackets, $\| u_{\kbf} - v_{\kbf} \|_2^2 \leq \epsilon^2$ and $\sum_{\kbf}(a_{\kbf} - b_{\kbf})^2 \leq \epsilon^2$. In addition, we assumed $| a_{\kbf} | \leq 1$ and $\| v_\kbf \|_2^2 \leq (8 M^2 \cquad^2)^L$ for all $\kbf$, so that
\begin{align*}
\|
	\sum_{\kbf} \Acal_{\kbf}
		- \sum_{\kbf} \Bcal_{\kbf}
\|_2^2
	&\leq 2 K^L \epsilon^2 ( (8 M^2 \cquad^2)^L + K^L ),
\end{align*}
which implies
\begin{multline}
N(\epsilon, S_{K,M}, d_2) \leq
	N_\Ucal \left(\frac{\epsilon}{\sqrt{2 K^L (K^L + (8 M^2 \cquad^2)^L)}} \right) \\
	\times N_\Phi \left(\frac{\epsilon}{\sqrt{2 K^L (K^L + (8 M^2 \cquad^2)^L)}} \right),
\label{eq.entropie.decoupage_matrice_emission}
\end{multline}
and finally by combining \ref{eq.entropie.translation_inclusion_chgtdistance}, \ref{eq.entropie.entropie_matrice}, \ref{eq.entropie.entropie_emission} and \ref{eq.entropie.decoupage_matrice_emission}:
\begin{align*}
H(u, B_\sigma - s_{K,M}, d_{g^*}) &\leq 
(K^2 - 1) \log \max \left( \frac{\cinf^{L/2} \sqrt{2 K^L (K^L + (8 M^2 \cquad^2)^L)} 2 L K^{L/2} }{u}, 1 \right) \\
&+ M K \log \max \left( \frac{\cinf^{L/2} \sqrt{2 K^L (K^L + (8 M^2 \cquad^2)^L)} L (4 \cquad M)^L}{u}, 1 \right) \\
\leq (MK + K^2 - 1) &\log \max \left( \frac{\cinf^{L/2} \sqrt{2 (K^L + (8 M^2 \cquad^2)^L)} L K^L (4 \cquad M)^L}{u},1 \right) \\
\leq (MK + K^2 - 1) &\log \max \left( \frac{\cinf^{L/2} \sqrt{2 (n^L + (8 \cquad^2)^L n^{2L})} n n^L (4 \cquad)^L n^L}{u},1 \right) \\
\leq (MK + K^2 - 1) &\log \max \left( \frac{\cinf^{L/2} 2 (8 \cquad^2)^{L/2} n^{L} n n^L (4 \cquad)^L n^L}{u},1 \right) \\
\leq (MK + K^2 - 1) &\log \max \left( \frac{2 (16 \cinf^{1/2} \cquad^2)^L n^{3L+1}}{u},1 \right) \\
\leq (MK + K^2 - 1) &\log \max \left( \frac{n^{6L}}{u},1 \right)
\end{align*}
for $n$ large enough ($n \geq n_0 := 16 \cinf^{1/2} \cquad^2$) because we assumed $M \leq n$, $K \leq n$, $L \leq n$, $\cquad \geq 1$ and $\cinf \geq 1$. Thus, Lemma \ref{lemme.entropie_et_phi} implies that if we write $C_0 = \sqrt{\pi}$ and
\begin{align*}
\varphi(\sigma) = C_0 \sigma \sqrt{MK + K^2 - 1} \left(1 + \sqrt{\log \left( \max \left\{ \frac{n^{6L}}{\sigma} ,1 \right\} \right) } \right)
\end{align*}
then for all $n \geq n_0$ and $\sigma > 0$,
\begin{align*}
\begin{cases}
\displaystyle \sigma^2 H(\sigma, S_{K,M}, d_2)
	\leq \varphi(\sigma)^2 \\
\displaystyle \int_0^\sigma \sqrt{H(u, S_{K,M}, d_2)} du
	\leq \varphi(\sigma) \\
\log(2) \sigma \leq \varphi(\sigma)
\end{cases}
\end{align*}
Let us now check that this function $\varphi$ satisfies equation (\ref{majoration_E}). First, note that $x \mapsto \frac{\varphi(x)}{x}$ is nonincreasing, so that $x \mapsto \frac{\varphi(x)}{x^2}$ is also nonincreasing. Thus, we may define $\sigma_{K,M}$ as the unique solution of the equation $\varphi(x) = \sqrt{n} x^2$, and then for all $\sigma \geq \sigma_{K,M}$:
\begin{equation*}
H(\sigma, B_\sigma - s_{K,M}, d_{g^*}) \leq \frac{\varphi(\sigma)^2}{\sigma^2} \leq \frac{\varphi(\sigma)}{\sigma} \sigma \sqrt{n} = \varphi(\sigma) \sqrt{n}
\end{equation*}
Equation (\ref{majoration_E}) follows immediately with $C = 2 (1 + \cinf^L + \cinf^{L/2} \cquad^L)$.

\paragraph{Proof of Lemma \ref{lemme.entropie.entropie_matrice}}

Let $\epsilon \in (0,2)$.

We start with the family $\{ [k/n, (k+1)/n], \; k \in \{0, \dots, n-1 \} \}$ with $n$ an integer between $1 / \epsilon$ and $2 / \epsilon$, which gives a bracket covering of size $\epsilon$ of $[0,1]$ with cardinality smaller than $2 / \epsilon$. These brackets will be used to control each free component of $\Qbf$ and $\pi$, that is $K^2 - 1$ components.

More precisely, we define the following bracket set:
\begin{align*}
\Big\{& [A,B] \; | \;
A_{\kbf} = \frac{1}{n^L} p_{k_1} \prod_{i=2}^L a_{k_{i-1}, k_i}, \;
B_{\kbf} = \frac{1}{n^L} (p_{k_1} + 1) \prod_{i=2}^L (a_{k_{i-1}, k_i} + 1),
\\
&\qquad p \in \{ 0, \dots, n-1 \}^{K - 1}, \;
\sum_{k=1}^{K-1} p_k < n, \;
p_K = n - \sum_{k=1}^{K-1} (p_k + 1),
\\
&\qquad a \in \{ 0, \dots, n-1 \}^{K \times (K-1)}, \;
\forall i \in \{ 1, \dots, K \}, \; \sum_{k=1}^{K-1} a_{i,k} < n \text{ and } \\
&\qquad a_{i,K} = n - \sum_{k=1}^{K-1} (a_{i,k} + 1)
\Big\}.
\end{align*}
This set covers $\Ucal$ and its cardinality is smaller than $\displaystyle \left(\frac{2}{\epsilon} \right)^{K^2 - 1}$. To get the bracket's size, note that
\begin{align*}
\sum_{\kbf \in \{1, \dots, K\}^L} & \left(
	\frac{1}{n^L} p_{k_1} \prod_{i=2}^L a_{k_{i-1}, k_i}
	- \frac{1}{n^L} (p_{k_1} + 1) \prod_{i=2}^L (a_{k_{i-1}, k_i} + 1)
	 \right)^2 \\
	&= \frac{1}{n^{2L}} \sum_{\kbf \in \{1, \dots, K\}^L} \left(
	\prod_{i=2}^L a_{k_{i-1}, k_i}
	+ \sum_{j=2}^L p_{k_1} \prod_{i \neq j, i \geq 2} a_{k_{i-1}, k_i}
	\right)^2 \\
		&\leq \frac{L^2 n^{2L-2} K^L}{n^{2L}} \\
		&\leq L^2 K^L \epsilon^2,
\end{align*}
and in the end
\begin{align*}
N(u, \Ucal, d_2) \leq \max \left( \frac{L K^{L/2}}{u} , 1 \right)^{K^2 - 1}.
\end{align*}

\paragraph{Proof of Lemma \ref{lemme.entropie.entropie_emission}}

All $f \in \Fcal \cap \Pfrak_M$ can be written as $\sum_{m=1}^M \lambda_m \varphi_m$ where $(\varphi_m)_{m \in \{1, \dots, M\}}$ is an orthonormal basis of $\Pfrak_M$. Then, assumption \textbf{[HF]} implies that $| \lambda_m | \leq \cquad$ for all $m \in \{1, \dots, M\}$.

We will therefore start from a bracket covering of the euclidian ball of radius $\cquad$ of $\Rbb^M$, from which we will construct a covering of $\Fcal \cap \Pfrak_M$ and of $\Phi$.
\begin{lemma}
\label{lemme.recouvrement_boule_euclidienne}
Let $\epsilon \in (0,4)$. There exists a bracket covering $\{ [a^i, b^i] \}_{1 \leq i \leq N_M}$ of size $\epsilon$ of the euclidian ball of radius $\cquad$ of $\Rbb^M$ with cardinality
\begin{equation*}
N_M \leq \max \left( \frac{4 \cquad \sqrt{M}}{\epsilon}, 1 \right)^M
\end{equation*}
such that for all $m \in \{1, \dots, M\}$, $i \in \{1, \dots, N_{M} \}$, $- \cquad \leq a^i_m \leq b^i_m \leq \cquad$.
\end{lemma}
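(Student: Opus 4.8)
The plan is to produce the covering by a completely explicit uniform coordinate grid and to take product brackets, the only point requiring care being to absorb the integer rounding into the constant $4$ of the statement.

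First I would fix a coordinate mesh $\delta := \epsilon / \sqrt{M}$ and set $N := \lceil 2\cquad \sqrt{M}/\epsilon \rceil$, the number of subintervals to be used on each coordinate axis. I partition $[-\cquad, \cquad]$ into the $N$ consecutive intervals $[t_{j}, t_{j+1}]$ for $0 \le j \le N-1$, where $t_j := -\cquad + j (2\cquad/N)$, so that each has length $2\cquad/N \le \delta$ and all the $t_j$ lie in $[-\cquad,\cquad]$.

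Then I define the bracket family to be all products $\prod_{m=1}^M [t_{j_m}, t_{j_m+1}]$ indexed by $(j_1, \dots, j_M) \in \{0,\dots,N-1\}^M$, i.e.\ I set $a^i_m = t_{j_m}$ and $b^i_m = t_{j_m+1}$. This family covers the cube $[-\cquad,\cquad]^M$, hence \emph{a fortiori} the Euclidean ball of radius $\cquad$ in $\Rbb^M$, and by construction $-\cquad \le a^i_m \le b^i_m \le \cquad$ for every $m$ and every $i$, which is the last requirement of the statement. Each bracket has $\ell^2$-size $\| b^i - a^i \|_2 = \sqrt{\sum_{m=1}^M (2\cquad/N)^2} = \sqrt{M}\,(2\cquad/N) \le \sqrt{M}\,\delta = \epsilon$, so all brackets indeed have size at most $\epsilon$ for the Euclidean distance.

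Finally I bound the cardinality $N_M = N^M$. The only routine estimate to carry out is the elementary inequality $\lceil x \rceil \le \max(2x, 1)$ for $x \ge 0$ (split into $x \ge 1$, where $\lceil x\rceil \le x+1 \le 2x$, and $0 < x < 1$, where $\lceil x\rceil = 1$); applied with $x = 2\cquad\sqrt{M}/\epsilon$ it gives $N \le \max(4\cquad\sqrt{M}/\epsilon, 1)$, whence $N_M \le \max(4\cquad\sqrt{M}/\epsilon, 1)^M$, as claimed. There is no genuine obstacle here: the heart of the argument is simply that a box of coordinate mesh $\delta$ has $\ell^2$-diameter $\sqrt{M}\,\delta$, and the only thing to watch is that the rounding-up of the number of intervals per axis is absorbed by the constant $4$ rather than by a sharper constant such as $2$.
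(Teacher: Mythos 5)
Your construction is correct and is essentially the paper's own proof: a regular coordinate grid of mesh $\epsilon/\sqrt{M}$ on $[-\cquad,\cquad]$, product brackets covering the cube and hence the Euclidean ball, and the rounding absorbed via $\lceil x\rceil \le \max(2x,1)$ to produce the constant $4$. Your handling of the ceiling is even slightly cleaner than the paper's (which invokes $\lceil x\rceil\le 2x$ for $x>1/2$ together with $\epsilon<4$ and $\cquad\ge 1$), but the argument is the same.
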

\begin{proof}
We start with a bracket covering of size $\epsilon/\sqrt{M}$ of the infinity ball of radius $\cquad$ of $\Rbb^M$. This can be done by a regular partition with $\max(\lceil 2\cquad / \epsilon \rceil, 1)$ pieces along each coordinate. One can easily check that such a covering is also a covering of size $\epsilon$ of the euclidian ball of radius $\cquad$ of $\Rbb^M$. To conclude, it is enough to notice that $\lceil x \rceil \leq 2x$ as soon as $x > 1/2$, and that $2\cquad / \epsilon > 1/2$ because $\cquad \geq 1$ and $\epsilon < 4$.
\end{proof}

Let $\{ [a^i, b^i] \}_{1 \leq i \leq N_M}$ be such a covering. For all $m \in \{1, \dots, M\}$, $i \in \{1, \dots, N_{M} \}$ and $y \in \Ycal$, let
\begin{align*}
u^i_m(y) &=
\begin{cases}
a^i_m \qquad \text{if } \varphi_m(y) \leq 0 \\
b^i_m \qquad \text{otherwise}
\end{cases} \\
v^i_m(y) &= a^i_m + b^i_m - u^i_m(y)
\end{align*}
and for all $i \in \{1, \dots, N_{M} \}$ and $y \in \Ycal$,
\begin{equation*}
\begin{cases}
\displaystyle U^i_1(y) = \sum_{m=1}^M u^i_m(y) \varphi_m(y) \\
\displaystyle U^i_2(y) = \sum_{m=1}^M v^i_m(y) \varphi_m(y)
\end{cases}
\end{equation*}
and finally for all $ \ibf = (i_1, \dots, i_K) \in \{1, \dots, N_{M} \}^K$ and $\kbf = (k_1, \dots, k_L) \in \{1, \dots, K\}^L$:
\begin{equation*}
\begin{cases}
\displaystyle (V^{\ibf})_\kbf = \min \left\{
	\bigotimes_{\beta=1}^L U^{i_{k_\beta}}_{\sigma_\beta};
	\quad \sigma \in \{1,2\}^L
\right\} \\
\displaystyle (W^{\ibf})_\kbf = \max \left\{
	\bigotimes_{\beta=1}^L U^{i_{k_\beta}}_{\sigma_\beta};
	\quad \sigma \in \{1,2\}^L
\right\}
\end{cases} .
\end{equation*}
It is enough to show that $\{ [V^{\ibf}, W^{\ibf}], \; \ibf \in \{1, \dots, N_{M} \}^K \}$ is a bracket covering of size $L (4\cquad M)^{L-1} \sqrt{M} \epsilon$ of $\Phi$ that satisfies
\begin{equation*}
\max_i \max_{\kbf} \int (W^i_{\kbf})^2 \text{d}\mu^{\otimes L} \leq (8 M^2 \cquad^2)^L.
\end{equation*}
Applying the Cauchy-Schwarz inequality, one gets that for all $i \in \{1, \dots, N_{M} \}$,
\begin{align*}
\|U^i_2 - U^i_1\|_2^2
	&=\| \sum_{m=1}^M |b^i_{m} - a^i_{m}|. |\varphi_{m}| \|_2^2 \\
	&\leq M \|b^i-a^i\|_2^2 \\
	&\leq M \epsilon^2.
\end{align*}
Moreover, for all $i \in \{1, \dots, N_{M} \}$ and $\sigma \in \{1,2\}$,
\begin{align*}
\|U^{i}_\sigma\|_{2}^{2}
	&\leq\| \sum_{m_i=1}^M |b^i_{m} + a^i_{m}|. |\varphi_{m}| \|_{2}^{2}\\
	&\leq 2 M (\|a^i\|_{2}^{2}+\|b^i\|_{2}^{2}) \\
	&\leq 4 M^2 \cquad^2.
\end{align*}
We then use that for all $\ibf \in \{1, \dots, N_{M} \}^K$ and $\kbf \in \{1, \dots, K\}^L$,
\begin{align*}
| W^\ibf_\kbf - V^\ibf_\kbf |(y)
	\leq& \sum_{\gamma = 1}^L
		\left| U_2^{i_{k_\gamma}} - U_1^{i_{k_\gamma}} \right| (y_\gamma)
		\max_{\jbf \in \{1,2\}^{L}} 
			\prod_{\beta \neq \gamma, \beta = 1}^L
				\left| U_{j_\beta}^{i_{k_\beta}} \right| (y_\beta) \\
	\leq& \sum_{\gamma = 1}^L
		\left| U_2^{i_{k_\gamma}} - U_1^{i_{k_\gamma}} \right| (y_\gamma)
		\prod_{\beta \neq \gamma, \beta = 1}^L \left(
			\left| U_1^{i_{k_\beta}}\right|
			+ \left| U_2^{i_{k_\beta}} \right|
		\right) (y_\beta)
\end{align*}
so that
\begin{align*}
\| W^\ibf_\kbf - V^\ibf_\kbf \|_2^2
	\leq& L \sum_{\gamma = 1}^L
		\left\| U_2^{i_{k_\gamma}} - U_1^{i_{k_\gamma}} \right\|_2^2
		\prod_{\beta \neq \gamma, \beta = 1}^L 2 \left(
			\left\| U_1^{i_{k_\beta}}\right\|_2^2
			+ \left\| U_2^{i_{k_\beta}} \right\|_2^2
		\right) \\
	\leq& L 2^{L-1} \sum_{\gamma = 1}^L
		M \epsilon^2
		\prod_{\beta \neq \gamma, \beta = 1}^L
			(2 \times 4 M^2 \cquad^2) \\
	=& L^2 (16 M^2 \cquad^2)^{L-1} M \epsilon^2 \\
	=& (L (4\cquad M)^{L-1} \sqrt{M} \epsilon)^2
\end{align*}
and finally $d_{\infty,2}(W^\ibf, V^\ibf) \leq L (4\cquad M)^{L-1} \sqrt{M} \epsilon$ for all $\ibf \in \{1, \dots, N_{M} \}^K$.

The last part of the lemma is proved by noting that for all $\ibf$ and $\kbf$,
\begin{align*}
(W^\ibf)_{\kbf}^2
	&= \max \{
			\bigotimes_{\beta=1}^L (U^{i_{k_\beta}}_{\sigma_\beta})^2;
			\quad \sigma \in \{1,2\}^L
		\} \\
	&\leq \sum_{\sigma \in \{1,2\}^L}
			\bigotimes_{\beta=1}^L (U^{i_{k_\beta}}_{\sigma_\beta})^2,
\end{align*}
so that
\begin{align*}
\int (W^\ibf)_{\kbf}^2 \text{d}\mu^{\otimes L}
	&\leq \sum_{\sigma \in \{1,2\}^L}
			\prod_{\beta=1}^L \| U^{i_{k_\beta}}_{\sigma_\beta} \|_2^2 \\
	&\leq \sum_{\sigma \in \{1,2\}^L} (4 M^2 \cquad^2)^L \\
	&\leq (8 M^2 \cquad^2)^L.
\end{align*}

\subsection{Choice of parameters}
\label{sec_choixConstantes}

Let us come back to equation (\ref{majoration_Z_choix_x_a_faire}). Since $x \mapsto \frac{\varphi(x)}{x}$ is nonincreasing, one has $\frac{\varphi(x_{K,M} \cinf^{L/2})}{x_{K,M} \sqrt{n}} \leq \sigma_{K,M} \cinf^{L/2}$ as soon as $x_{K,M} \geq \frac{\sigma_{K,M}}{\cinf^{L/2}}$, so with probability $1 - e^{-z_{K,M} - z}$:
\begin{equation*}
Z_{K,M}(s) \leq 4 C^* \left[
		C \cinf^{L/2} \frac{\sigma_{K,M}}{x_{K,M}}
		+ \cinf^{L/2} \sqrt{\frac{z_{K,M} + z}{x_{K,M}^2 n}}
		+ 2 \cinf^L \frac{z_{K,M} + z}{x_{K,M}^2 n}
	\right].
\end{equation*}
Let $C' = C^* \max(C,1) \cinf^L$. One gets
\begin{equation*}
Z_{K,M}(s) \leq 4 C' \left[
		\frac{\sigma_{K,M}}{x_{K,M}}
		+ \sqrt{\frac{z_{K,M} + z}{x_{K,M}^2 n}}
		+ \frac{z_{K,M} + z}{x_{K,M}^2 n}
	\right].
\end{equation*}

Let $x_{K,M} = \theta^{-1} \sqrt{\sigma_{K,M}^2 + \frac{z_{K,M} + z}{n}}$ with $\theta$ such that $2 \theta + \theta^2 \leq 1 / (16 C')$. Then, with probability $1 - e^{-z_{K,M} - z}$:
\begin{equation*}
Z_{K,M}(s) \leq 4 C'(\theta + \theta + \theta^2) \leq \frac{1}{4}
\end{equation*}
Now choose $z_{K,M} = M + K$, it follows that $\sum_{K \in \Nbb^*, M \in \Mcal} e^{-z_{K,M}} \leq (e-1)^{-2} \leq 1$ and the first point of the lemma is proved.

Moreover, one has with probability $1 - e^{-z}$, for all $K,M$:
\begin{align*}
Z_{K,M}(s) x_{K,M}^2
	&\leq 4 C' \left[
		\sigma_{K,M} x_{K,M}
		+ x_{K,M} \sqrt{\frac{z_{K,M} + z}{n}}
		+ \frac{z_{K,M} + z}{n}
	\right] \\
	&\leq 4 C' \left[
		2 \theta x_{K,M}^2
		+ \frac{z_{K,M} + z}{n}
	\right] \\
	&= 4 C' \left[
		2 \theta^{-1} \sigma_{K,M}^2
		+ (2 \theta^{-1} + 1) \frac{M+K}{n}
		+ (2 \theta^{-1} + 1) \frac{z}{n}
	\right]
\end{align*}
Let $A = 4C' (2 \theta^{-1} + 1)$. We get that with probability $1 - e^{-z}$, for all $K,M$:
\begin{align*}
Z_{K,M}(s) x_{K,M}^2
	&\leq A \left[
		\sigma_{K,M}^2
		+ \frac{M+K}{n}
		+ \frac{z}{n}
	\right]
\end{align*}
Therefore the lemma holds as soon as
\begin{equation}
\label{eq_condition_pen1}
\forall K \leq n, \; \forall M \leq n, \quad
\pentilde(n,M,K) \geq A \left[
		\sigma_{K,M}^2
		+ \frac{M+K}{n} \right]
\end{equation}

\begin{lemma}
There exists constants $C_1$ and $n_1$ such that for all $n \geq n_1$:
\begin{equation*}
\sigma_{K,M} \leq C_1 \sqrt{\frac{M K + K^2 - 1}{n}} (1 + \sqrt{\log(n)})
\end{equation*}
\end{lemma}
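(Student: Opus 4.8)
The plan is to exploit the way $\sigma_{K,M}$ was defined together with the monotonicity already recorded for $\varphi$. Recall that $\sigma_{K,M}$ is the unique positive solution of $\varphi(x) = \sqrt{n}\,x^2$, and that $x \mapsto \varphi(x)/x^2$ is nonincreasing (it is the product of the two nonincreasing maps $x \mapsto \varphi(x)/x$ and $x \mapsto 1/x$). Consequently the set $\{x>0 : \varphi(x) \leq \sqrt{n}\,x^2\}$ is exactly $[\sigma_{K,M}, +\infty)$. Therefore, to bound $\sigma_{K,M}$ from above, I would not try to solve the fixed-point equation but simply exhibit a candidate $x_0$ of the announced form and check that $\varphi(x_0) \leq \sqrt{n}\,x_0^2$; this immediately forces $\sigma_{K,M} \leq x_0$.

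Writing $D := MK + K^2 - 1$, I would take
\begin{equation*}
x_0 := C_1 \sqrt{\frac{D}{n}}\left(1 + \sqrt{\log(n)}\right),
\end{equation*}
with $C_1 \geq 1$ a constant to be fixed at the very end. The one genuinely delicate point is to control the logarithmic factor $1 + \sqrt{\log(\max\{n^{6L}/x_0, 1\})}$ entering $\varphi(x_0)$, and this is where the two extreme regimes of $D$ must be handled. On the one hand, since $K \geq 1$ and $M \geq 1$ one always has $D \geq 1$, so $x_0 \geq C_1/\sqrt{n} \geq n^{-1/2}$, which yields the \emph{uniform} upper bound
\begin{equation*}
\log\!\left(\frac{n^{6L}}{x_0}\right) \leq \left(6L + \tfrac12\right)\log(n) - \log(C_1) \leq \left(6L + \tfrac12\right)\log(n).
\end{equation*}
On the other hand, since $M \leq n$ and $K \leq n$ we have $D \leq 2n^2$, hence $x_0 \leq C_1\sqrt{2n}\,(1+\sqrt{\log n}) \leq n^{6L}$ for $n$ large; thus for $n$ large enough the maximum inside the logarithm is attained at $n^{6L}/x_0$ and the previous display applies. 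Using $\sqrt{ab}\leq\sqrt a\sqrt b$ and the elementary inequality $1 + c\sqrt{\log n} \leq (1+c)(1+\sqrt{\log n})$, I would deduce that with $C_2(L) := 1 + \sqrt{6L + \tfrac12}$,
\begin{equation*}
1 + \sqrt{\log\!\left(\max\{n^{6L}/x_0, 1\}\right)} \leq C_2(L)\left(1 + \sqrt{\log n}\right).
\end{equation*}

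Injecting this into the definition of $\varphi$ (with $C_0 = \sqrt{\pi}$) gives
\begin{equation*}
\varphi(x_0) \leq C_0\,C_2(L)\, x_0 \sqrt{D}\left(1 + \sqrt{\log n}\right)
= C_0\,C_2(L)\,C_1 \,\frac{D}{\sqrt n}\left(1 + \sqrt{\log n}\right)^2,
\end{equation*}
whereas a direct computation gives $\sqrt{n}\,x_0^2 = C_1^2 \frac{D}{\sqrt n}(1 + \sqrt{\log n})^2$. Hence the target inequality $\varphi(x_0) \leq \sqrt n\, x_0^2$ holds as soon as $C_0\,C_2(L)\,C_1 \leq C_1^2$, i.e. as soon as $C_1 \geq C_0\,C_2(L)$. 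I would therefore set $C_1 := \max\!\left(1, C_0\,C_2(L)\right)$ and take $n_1$ large enough to validate the two approximations above (attainment of the maximum at $n^{6L}/x_0$) together with the standing requirement $n \geq n_0 = 16\,\cinf^{1/2}\cquad^2$ under which $\varphi$ is the valid entropy bound, which concludes the proof with $\sigma_{K,M} \leq x_0$ for all $K \leq n$, $M \leq n$ and $n \geq n_1$.

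The step I expect to be the main obstacle is precisely the treatment of the truncated logarithm $\max\{n^{6L}/\sigma, 1\}$: one must bound $-\log x_0$ from above uniformly over all admissible $(K,M)$ — which is exactly why the crude but uniform lower bound $D \geq 1$ is indispensable — while simultaneously guaranteeing $x_0 \leq n^{6L}$ so that the truncation by $1$ stays inactive. Once the logarithmic factor is pinned down to the form $C_2(L)(1+\sqrt{\log n})$, the remainder is a routine side-by-side comparison of $\varphi(x_0)$ and $\sqrt{n}\,x_0^2$ that merely fixes the constant $C_1$.
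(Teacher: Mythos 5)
Your proposal is correct and follows essentially the same route as the paper: exploit the monotonicity of $x \mapsto \varphi(x)/x^2$ to reduce the problem to checking $\varphi(x_0) \leq \sqrt{n}\,x_0^2$ at the candidate $x_0 = C_1\sqrt{(MK+K^2-1)/n}\,(1+\sqrt{\log n})$, bound the truncated logarithm by a multiple of $\log n$ using $MK+K^2-1 \geq 1$ (the paper bounds the argument of the logarithm by $n^{6L+1/2} \leq n^{7L}$, which is the same device as your lower bound $x_0 \geq n^{-1/2}$), and then fix $C_1$ by comparing constants. The only cosmetic difference is in how the truncation $\max\{\cdot,1\}$ is disposed of (the paper first shows $\sigma_{K,M} \leq n^{6L}$, you check $x_0 \leq n^{6L}$ directly), and in the exact value of the final constant.
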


\begin{proof}
Let $x(C) = C \sqrt{\frac{M K + K^2 - 1}{n}} (1 + \sqrt{\log(n)})$.

$\sigma_{K,M}$ is defined by the equation $\frac{\varphi(x)}{x^2 \sqrt{n}} = 1$. The function $x \mapsto \frac{\varphi(x)}{x^2}$ is nondecreasing, so it is enough to show that $\frac{\varphi(x(C))}{x(C)^2 \sqrt{n}} \leq 1$ for some constant $C$ that we can assume to be greater than 1.

It is easy to check that there exists a constant $n_1$ such that for all $n \geq n_1$, $\frac{\varphi(n^{6L})}{(n^{6L})^2 \sqrt{n}} \leq 1$, so that $\sigma_{K,M} \leq n^{6L}$, which makes it possible to assume $x(C) \leq n^{6L}$. Then
\begin{align*}
\frac{\varphi(x(C))}{x(C)^2 \sqrt{n}}
	&= \frac{C_0}{C} \frac{1 + \sqrt{\log \left( \frac{n^{6L+1/2}}{ C \sqrt{(M K + K^2 - 1)} \left(1 + \sqrt{\log(n)} \right)} \right)}}{1 + \sqrt{\log(n)}} \\
	&\leq \frac{C_0}{C} \frac{1 + \sqrt{\log(n^{7L})}}{1 + \sqrt{\log(n)}} \\
	&= \frac{C_0}{C} \frac{1 + \sqrt{7L}\sqrt{\log(n)}}{1 + \sqrt{\log(n)}}
\end{align*}
and by taking $C_1 = \max(C_0 \sqrt{7 L}, 1)$, one gets that
\begin{align*}
\frac{\varphi(x(C_1))}{x(C_1)^2 \sqrt{n}} \leq 1
\end{align*}
which means that $\sigma_{K,M} \leq x(C_1)$.
\end{proof}

The condition of equation (\ref{eq_condition_pen1}) becomes
\begin{equation*}
\pentilde(n,M,K) \geq A \left[ \frac{C_1^2 (MK + K^2 - 1)(1 + \sqrt{\log(n)})^2 + M+K}{n} \right]
\end{equation*}
which is implied by
\begin{equation*}
\pentilde(n,M,K) \geq \rho (MK + K^2 - 1) \frac{\log(n)}{n}
\end{equation*}
for some constant $\rho$ depending only on $\cquad$, $\cinf$, $\Qbf^*$ and $L$. This concludes the proof.

\section{}
\label{app_auxiliary}

\subsection{Proof of Proposition \ref{prop_overpenalizing}}
\label{sec_proof_prop_overpenalizing}

Let $m \geq 3$. Note $r = \frac{m}{m-1}$ and $K_0 = (m-1)^m$. One can check that $K = K_0 r^m \geq 2 K_0$ and $K_0 r^k \in \Nbb^*$ for all $k \in \{0, \dots, m\}$.

Denote by $n(K)$ the integer $n_1$ in the hypothesis \textbf{[Hpen]}$(0,\rho)$ corresponding to $K^* = K$.
Then for all $n \geq \sup_{k \in \{0, \dots, m-1\}} n(K_0 r^k)$, for all $M$ and for all $k \in \{1, \dots, m\}$,
\begin{multline*}
\pen(n, M, K_0 r^k) - \pen(n, M, K_0 r^{k-1})
	\geq \rho (M K_0 r^k + K_0^2 (r^2)^k  - 1) \frac{\log(n)}{n}.
\end{multline*}
Taking the sum over $k \in \{1, \dots, m\}$, one gets that
\begin{align*}
\pen(n, M, K)
	&\geq \rho \left(M \frac{r}{r-1}(K-K_0) + \frac{r^2}{r^2-1} (K^2 - K_0^2) - m \right) \frac{\log(n)}{n} \\
	&\geq \rho \left(M \frac{r}{r-1}(K-K_0) + \frac{r^2}{r^2-1} (K^2 - 2 K_0^2) \right) \frac{\log(n)}{n}
\end{align*}
since $m \leq K_0^2 = (m-1)^{2m}$. Using that $K \geq 2K_0$,
\begin{align*}
\pen(n, M, K)
	\geq \frac{\rho}{2}
		\left(\frac{r}{r-1} M K + \frac{r^2}{r^2-1} K^2 \right) \frac{\log(n)}{n}.
\end{align*}
Let $v_m = \frac{\rho}{2} \min\left(\frac{r}{r-1}, \frac{r^2}{r^2-1}\right)$. One gets
\begin{align*}
\pen(n, M, K)
	&\geq v_m ( M K + K^2 ) \frac{\log(n)}{n}.
\end{align*}
Therefore, there exists a non-decreasing sequence $(u_n)_{n \geq 1}$ such that
\begin{equation*}
\begin{cases}
\forall n, \; \forall M \leq n, \; \forall K \leq n, \; \pen(n, M, K) \geq u_n ( M K + K^2 - 1 ) \frac{\log(n)}{n} \\
\forall m, \; u_{\max (m, \sup_{k \in \{0, \dots, m-1\}} n(K_0 r^k))} \geq v_m
\end{cases}.
\end{equation*}
and since $v_m \longrightarrow \infty$, we get that $u_n \longrightarrow \infty$, which concludes the proof.

We could for instance take
\begin{equation*}
u_n = \max \left( 0, \, \sup \left\{ v_i \; \Big| \; i \leq n \text{ s.t. } \sup_{k \in \{0, \dots, i-1\}} n(i^k (i-1)^{i-k}) \leq n \right\} \right).
\end{equation*}

\subsection{Auxiliary lemmas}

\begin{lemma}
\label{lemme.controle_S}
\begin{equation*}
\forall t \in \bigcup_K S_K, \quad
\begin{cases}
\| t \|_\infty \leq \cinf^L \\
\| t \|_2 \leq \cquad^L \\
\Ebb [ t^2 ] \leq \cinf^L \| t \|^2_2
\end{cases}
\end{equation*}
\end{lemma}

\begin{proof}
$t$ can be written as $t = g^{\pi, \Qbf, \fbf}$ with $\pi$ a probability $K$-uple, $\Qbf$ a transition matrix of size $K$ and $\fbf \in \Fcal^K$ for some $K \geq 1$.

The first point follows from
\begin{align*}
\|t\|_\infty =& \left\| \sum_{k_1, \dots, k_L = 1}^K \pi(k_1) \prod_{i=2}^{L} \Qbf(k_{i-1}, k_i) \bigotimes_{i=1}^L f_{k_i} \right\|_\infty \\
	\leq& \sum_{k_1, \dots, k_L = 1}^K \pi(k_1) \prod_{i=2}^{L} \Qbf(k_{i-1}, k_i) \left\| \bigotimes_{i=1}^L f_{k_i} \right\|_\infty \\
	\leq& \sum_{k_1, \dots, k_L = 1}^K \pi(k_1) \prod_{i=2}^{L} \Qbf(k_{i-1}, k_i) \prod_{i=1}^L \| f_{k_i} \|_\infty \\
	\leq& \cinf^L \sum_{k_1, \dots, k_L = 1}^K \pi(k_1) \prod_{i=2}^{L} \Qbf(k_{i-1}, k_i) \\
	=& \cinf^L
\end{align*}

\allowdisplaybreaks
For the second point, we use the Cauchy-Schwarz inequality:
\begin{align*}
\|t\|^2_2 =& \int \left( \sum_{k_1, \dots, k_L = 1}^K \pi(k_1) \prod_{i=2}^{L} \Qbf(k_{i-1}, k_i) \prod_{i=1}^L f_{k_i}(y_i) \right)^2 \text{d}\mu(y_1) \dots \text{d}\mu(y_L) \\
	=& \int \Bigg( \sum_{k_1, \dots, k_L = 1}^K \sqrt{\pi(k_1) \prod_{i=2}^{L} \Qbf(k_{i-1}, k_i)}  \\
		&\qquad \left( \sqrt{\pi(k_1) \prod_{i=2}^{L} \Qbf(k_{i-1}, k_i)} \prod_{i=1}^L f_{k_i}(y_i) \right) \Bigg)^2 \text{d}\mu(y_1) \dots \text{d}\mu(y_L) \\
	\leq& \int \left( \sum_{k_1', \dots, k_L' = 1}^K \pi(k_1') \prod_{i=2}^{L} \Qbf(k_{i-1}', k_i') \right) \\
		&\qquad \left( \sum_{k_1, \dots, k_L = 1}^K \pi(k_1) \prod_{i=2}^{L} \Qbf(k_{i-1}, k_i) \prod_{i=1}^L f^2_{k_i}(y_i) \right) \text{d}\mu(y_1) \dots \text{d}\mu(y_L) \\
	=& \sum_{k_1, \dots, k_L = 1}^K \pi(k_1) \prod_{i=2}^{L} \Qbf(k_{i-1}, k_i) \int \prod_{i=1}^L f^2_{k_i}(y_i) \text{d}\mu(y_1) \dots \text{d}\mu(y_L) \\
	=& \sum_{k_1, \dots, k_L = 1}^K \pi(k_1) \prod_{i=2}^{L} \Qbf(k_{i-1}, k_i) \prod_{i=1}^L \| f_{k_i}\|_2^2 \\
	\leq& \sum_{k_1, \dots, k_L = 1}^K \pi(k_1) \prod_{i=2}^{L} \Qbf(k_{i-1}, k_i) \cquad^{2L} \\
	=& \cquad^{2L} \\
\end{align*}

The last point comes from
\begin{align*}
\Ebb [ t^2 ] =& \int g^* t^2 \text{d}\mu^{\otimes L} \\
	\leq& \int \| g^* \|_\infty t^2 \text{d}\mu^{\otimes L} \\
	\leq& \cinf^L \int t^2 \text{d}\mu^{\otimes L} \quad \text{par le premier point} \\
	=& \cinf^L \| t \|^2_2
\end{align*}
\end{proof}

\begin{lemma}
\label{lemme.entropie_et_phi}
Let $A, B \in \Rbb_+^*$. Let $H : x \in \Rbb_+^* \mapsto A \log \max(\frac{B}{x}, 1)$, and $\varphi(x) : x \in \Rbb_+^* \mapsto x \sqrt{\pi A} (1 + \sqrt{\log \max(\frac{B}{x}, 1)})$. Then:
\begin{equation*}
\begin{cases}
\displaystyle x^2 H(x) \leq \varphi(x)^2 \\
\displaystyle \int_0^x \sqrt{H(u)} du \leq \varphi(x)
\end{cases}
\end{equation*}
\end{lemma}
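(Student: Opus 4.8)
The plan is to treat the two inequalities separately: the first is essentially immediate, while the second requires a change of variables turning the integral into an incomplete Gamma integral.

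For the first inequality, write $\ell(x) := \log\max(B/x,1) \ge 0$, so that $H(x) = A\,\ell(x)$ and $\varphi(x)^2 = \pi A\, x^2 (1 + \sqrt{\ell(x)})^2$. Expanding $(1+\sqrt{\ell(x)})^2 = 1 + 2\sqrt{\ell(x)} + \ell(x) \ge \ell(x)$ and using $\pi \ge 1$ gives $\varphi(x)^2 \ge \pi A\, x^2 \ell(x) \ge A\, x^2 \ell(x) = x^2 H(x)$, which is exactly the claim. No case distinction is needed here.

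For the second inequality, I would first factor out $\sqrt{A}$: since $\sqrt{H(u)} = \sqrt{A}\sqrt{\ell(u)}$ and $\varphi(x) = \sqrt{A}\, x\sqrt{\pi}\,(1+\sqrt{\ell(x)})$, it suffices to prove $\int_0^x \sqrt{\ell(u)}\,du \le x\sqrt{\pi}\,(1+\sqrt{\ell(x)})$. I would then split according to whether $x \le B$ or $x > B$, since $\ell(u) = \log(B/u)$ for $u \le B$ and $\ell(u) = 0$ for $u \ge B$. In the main case $x \le B$, set $T := \log(B/x) = \ell(x) \ge 0$ and perform the substitution $u = B e^{-t}$, which gives $\int_0^x \sqrt{\ell(u)}\,du = B\int_T^\infty \sqrt{t}\,e^{-t}\,dt$. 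Shifting $t = T + s$ and using the subadditivity $\sqrt{T+s} \le \sqrt{T} + \sqrt{s}$ yields $\int_T^\infty \sqrt{t}\,e^{-t}\,dt \le e^{-T}\bigl(\sqrt{T} + \Gamma(3/2)\bigr) = e^{-T}\bigl(\sqrt{T} + \tfrac{\sqrt{\pi}}{2}\bigr)$. Since $x = Be^{-T}$, the left-hand side equals $x\bigl(\sqrt{T} + \tfrac{\sqrt{\pi}}{2}\bigr)$, and the target bound $x\bigl(\sqrt{T}+\tfrac{\sqrt{\pi}}{2}\bigr) \le x\sqrt{\pi}\,(1+\sqrt{T})$ follows from $\sqrt{\pi} > 1$ (so $\sqrt{\pi}\sqrt{T} \ge \sqrt{T}$ and $\sqrt{\pi} \ge \tfrac{\sqrt{\pi}}{2}$). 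In the remaining case $x > B$, the integrand vanishes on $[B,x]$, so $\int_0^x \sqrt{\ell(u)}\,du = \int_0^B \sqrt{\log(B/u)}\,du = B\,\Gamma(3/2) = \tfrac{\sqrt{\pi}}{2}B$ (the previous computation with $T=0$); this is bounded by $\tfrac{\sqrt{\pi}}{2}x \le x\sqrt{\pi}\,(1+\sqrt{\ell(x)})$ because $B < x$ and $\ell(x) = 0$.

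The only genuinely computational ingredient—and hence the main (mild) obstacle—is the Gamma-integral estimate $\int_T^\infty \sqrt{t}\,e^{-t}\,dt \le e^{-T}(\sqrt{T} + \tfrac{\sqrt{\pi}}{2})$; the subadditivity of the square root together with the identity $\Gamma(3/2) = \tfrac{\sqrt{\pi}}{2}$ dispatch it cleanly, and the constant $\sqrt{\pi}$ appearing in $\varphi$ is precisely what makes the final comparison close.
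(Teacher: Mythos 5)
Your proof is correct and follows the same structure as the paper's: the first inequality by direct expansion, and the second by the case split $x\le B$ versus $x>B$, with the key ingredient being the bound $\int_0^{\sigma}\sqrt{\log(B/u)}\,du\leq \sigma\bigl(\sqrt{\pi}+\sqrt{\log(B/\sigma)}\bigr)$, which the paper merely quotes and which you actually prove (in a slightly sharper form, with $\sqrt{\pi}/2$) via the substitution $u=Be^{-t}$ and subadditivity of the square root.
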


\begin{proof}
The first point is straightforward.

For the second point, we have two cases.
\begin{itemize}
\item[\textbf{Case 1:}] $x \leq B$. Then $H(x) = \log (\frac{B}{x})$. Therefore, we can use that $\int_0^{\sigma}\sqrt{\log(\frac{B}{x})}dx\leq \sigma(\sqrt{\pi}+\sqrt{\log(\frac{B}{\sigma})})$, which is enough to conclude.

\item[\textbf{Case 2:}] $x \geq B$. Then $H(x) = 0$ and $\varphi(x) = x \sqrt{\pi A} \geq B \sqrt{\pi A} = \varphi(B)$. Thus,
\begin{align*}
\int_0^x \sqrt{H(u)} du &= \int_0^B \sqrt{H(u)} du \\
	&\leq \varphi(B) \\\
	&\leq \varphi(x)
\end{align*}
\end{itemize}
\end{proof}

\end{document}